\documentclass[11pt]{amsart}%
\usepackage{amssymb}
\usepackage{amsmath}
\usepackage{amsthm}
\usepackage{amscd,bm}
\usepackage{enumitem,graphicx,tikz, pst-node, pst-plot, pstricks}
\usepackage[margin=1.0in]{geometry}%
\usepackage{hyperref}
\usepackage{xcolor}
\usepackage{cite,bbm}

\DeclareMathOperator{\supp}{supp}
\DeclareMathOperator{\loc}{loc}

\DeclareMathOperator{\Ext}{Ext}

\pagestyle{plain} 

\theoremstyle{plain}
\newtheorem{theorem}{Theorem}[section]
\newtheorem{proposition}{Proposition}[section]
\newtheorem{corollary}{Corollary}[section]
\newtheorem{lemma}{Lemma}[section]

\newtheorem{definition}{Definition}[section]

\theoremstyle{remark}
\newtheorem{remark}{Remark}[section]

\newtheorem{assumption}{Assumption}[section]

\subjclass[2010]{28A80, 35A23, 35A30, 35J25, 47A07, 47A10, 47B25, 49Q10}
\begin{document}

\title{Boundary value problems on non-Lipschitz uniform domains: Stability, compactness and the existence of optimal shapes}
\author{Michael Hinz$^1$}
\address{$^1$ Fakult\"{a}t f\"{u}r Mathematik, Universit\"{a}t Bielefeld, Postfach 100131, 33501 Bielefeld,
Germany}
\thanks{$^1$ Research supported in part by the DFG IRTG 2235: 'Searching for the regular in the irregular: Analysis of singular and random systems'. The project was finished during a stay at CentraleSup\'elec, Univ\'ersit\'e Paris-Saclay, whose kind hospitality is gratefully acknowledged.}
\email{mhinz@math.uni-bielefeld.de}
\author{Anna Rozanova-Pierrat$^2$}
\address{$^2$ Laboratory  Math\'ematiques et Informatique pour la Complexit\'e et les Syst\`emes, CentraleSup\'elec, Univ\'ersit\'e Paris-Saclay, Campus de Gif-sur-Yvette, Plateau de Moulon, 
3 rue Joliot Curie, 91190 Gif-sur-Yvette, France}
\email{anna.rozanova-pierrat@centralesupelec.fr}
\author{Alexander Teplyaev$^3$}
\address{$^3$Department of Mathematics
University of Connecticut
Storrs, CT 06269-1009
USA}
\email{alexander.teplyaev@uconn.edu}

\begin{abstract}
We study boundary value problems for bounded uniform domains in $\mathbb{R}^n$, $n\geq 2$, with non-Lipschitz (and possibly fractal) boundaries. We prove Poincar\'e inequalities with trace terms and uniform constants for uniform $(\varepsilon,\infty)$-domains within bounded common confinements. We then introduce generalized Dirichlet, Robin and Neumann problems for Poisson type equations and prove the Mosco convergence of the associated energy functionals along sequences of suitably converging domains. This implies a stability result for weak solutions, and this also implies the norm convergence of the associated resolvents and the convergence of the corresponding eigenvalues and eigenfunctions. Based on our earlier work, we prove compactness results 
for parametrized classes of admissible domains, energy functionals and weak solutions. Using these results, we can verify the existence of optimal shapes in these classes.

\tableofcontents
\end{abstract}
\keywords{Fractal boundaries, Poincar\'e inequalities, Robin problems, Mosco convergence, norm resolvent convergence, optimal shapes}

\maketitle

\section{Introduction}

This article deals with boundary value problems for elliptic equations that involve rough shapes. To focus on the main issues, we write our results for Poisson type equations; a generalization to other elliptic equations is straightforward. We consider  domains $\Omega\subset \mathbb{R}^n$, $n\geq 2$, not necessarily Lipschitz, together with closed sets $\Gamma\subset \overline{\Omega}$ that support a nonzero finite Borel measure $\mu$ of a certain upper regularity. The set $\Gamma$ is treated as a generalized boundary on which boundary conditions are imposed. Boundary value problems in the more narrow sense that $\Gamma=\partial\Omega$ are included in this setup as a special case. We study the behavior of weak, i.e. variational, solutions for varying domains $\Omega$ and measures $\mu$, and we address stability, compactness and the existence of optimal (energy minimizing) shapes. We are mainly interested in the case of the Robin problem, for which the literature on the existence of optimal shapes in admissible classes of rough domains is still sparse.
The problem is quite nontrivial, because it involves integrals (on $\Gamma$, which could be  $\Gamma=\partial\Omega$) with respect to a measure $\mu$ which may be a non-smooth and non-doubling. The case of the Dirichlet problem appears as a particular case, so we do not exclude it, although it can be dealt with using much easier known methods, \cite{BB2005,HENROT-e}.

	This article is the central part of a five paper series. In the foundational papers   
	\cite{HMR-PRT2021,HINZ-2020,DRPT-2020} we develop general techniques how to deal with boundary value problems in  non-Lipschitz uniform domains whose boundaries are not $d$-sets but satisfy only weaker upper and lower Ahlfors regularity estimates with possibly different lower and upper dimensions and no doubling property. 
	In 
	\cite{DHRPT-2021}
	we apply these results to the study of nonlinear Westervelt equation modeling the optimal absorption of ultrasound waves by a reflective
	or isolating boundary in   non-Lipschitz uniform domains.

In this paper show \emph{four main results}. The \emph{first main result} is about \emph{families of Poincar\'e inequalities with uniform constants}, Theorem \ref{T:uniPoincare}. It states that if $n\geq 2$, $D\subset \mathbb{R}^n$ is a bounded Lipschitz confinement, $\varepsilon>0$, $n-p<d\leq n$ and $c_d>0$, then for any $(\varepsilon,\infty)$-domain $\Omega\subset D$ and any nonzero Borel measure $\mu$ supported in $\overline{\Omega}$ and upper $d$-regular with constant $c_d$, (\ref{E:upperdreg}), a Poincar\'e inequality for $W^{1,p}(\Omega)$-function holds with a uniform constant not depending on the choice of $\Omega$ or $\mu$. The stated condition on $d$ ensures the existence of a well-defined trace on $\Gamma$ in the sense of \cite[Theorem 6.2.1 and Section 7]{AH96}, see Corollary \ref{C:tracesimple}. For $p=2$ we then   formulate abstract boundary value problems of Dirichlet, Robin and Neumann type for the linear Poisson equation on $W^{1,2}$-extension domains. We next consider the situation that $(\Omega_m)_m\subset D$ is a sequence of $(\varepsilon,\infty)$-domains $\Omega_m$ converging to some domain $\Omega$ in the Hausdorff sense and in the sense of characteristic functions and that $(\mu_m)_m$ is a sequence of measures $\mu_m$ supported inside $\overline{\Omega}_m$, respectively, all upper $d$-regular with the same constant $c_d$, and weakly converging to some measure $\mu$. Theorem \ref{T:Mosco} then tells that the energy functionals associated with the respective type of boundary value problem converge in the sense of Mosco. 

Together with Theorem \ref{T:uniPoincare} this leads to our \emph{second main result}, Theorem \ref{T:compactnesspropvar}. It is a \emph{stability result} and states that in the homogeneous case the (continuations by zero of the) weak solutions converge in $L^2(D)$ to the (continuations by zero of the) weak solution to a problem of the same type on the limit domain. We also prove norm resolvent convergence and the convergence of the corresponding spectral projectors, eigenvalues and eigenfunctions, Theorem \ref{T:normresconv}. To proceed we then introduce parametrized classes $U_{ad}(D,D_0,\varepsilon, s, \bar c_{s}, \bar c, d, c_d)$ of what we call \emph{shape admissible triples} $(\Omega,\nu,\mu)$, Definition \ref{DefShapeAdmis}. This definition is a generalization of \cite[Definition 2]{HINZ-2020}. The parameters $D$, $\varepsilon$, $d$ and $c_d$ are as above. Any $(\Omega,\nu,\mu)$ in the class consists of an $(\varepsilon,\infty)$-domain $\Omega$ contained in $D$ and containing a prescribed fixed open set $D_0$, and two measures $\mu$ and $\nu$. As before, the measure $\mu$ is supported in $\overline{\Omega}$ and upper $d$-regular, (\ref{E:upperdreg}), with constant $c_d$. The second measure $\nu$ has support $\partial\Omega$, has total mass bounded above by $\bar{c}>0$ and is lower $s$-regular in the sense  of (\ref{E:lowreg-w}), with fixed exponent $n-1\leq s< n$ and constant $\bar{c}_s>0$. These measures $\nu$ are introduced as topological tool, Remark \ref{R:boundaryvol}. Within this setup an improvement of a compactness result from \cite[Theorem 3]{HINZ-2020} holds, Theorem \ref{T:compact}. It tells that given such a parametrized class of shape admissible triples, any sequence $((\Omega_m,\nu_m,\mu_m)_m)$ it contains must have a subsequence that converges to a shape admissible triple in the same class in the Hausdorff sense, the sense of characteristic functions, the sense of compacts and the sense of weak convergence of the measures $\nu_m$ and the measures $\mu_m$. This enables our \emph{third main result}, Theorem \ref{T:Moscocompact}, which tells that the corresponding energy forms inherit this \emph{compactness} and entails that weak solutions to the respective problems have convergent subsequences as well, Corollary \ref{C:compactness}. This fact then provides easy access to our \emph{fourth main result}, namely the \emph{existence of optimal shapes} within a given parametrized class, formulated in Theorems \ref{T:opti} and \ref{T:optimu}.

These results are key contributions to a larger research program on shape optimization involving rough shapes with boundaries that may be fractal. In the long run a theory of shape optimization robust enough to discuss rough shapes with fractal boundaries could have a significant impact upon fields like acoustical engineering, biomimetic architecture and surface design. In \cite{MAGOULES-2020} mixed boundary value problems for the Helmholtz equation had been considered. It had been observed that since integrals over the boundary are involved, classes of Lipschitz admissible shapes may not contain the shape realizing the infimum energy over the class, see also \cite[Section 5]{HMR-PRT2021}. In \cite[Definition 2]{HINZ-2020} we therefore proposed larger classes of admissible shapes involving bounded $(\varepsilon,\infty)$-domains and proved that in these classes optimal (i.e. energy minimizing) shapes exist, \cite[Section 7]{HINZ-2020}. Note that also \cite[Theorem 3.2]{BucurGiacomini2016} used the idea to enlarge the classes of admissible shapes beyond Lipschitz, although in a methodologically different context. The equations discussed here and in \cite{HINZ-2020, HMR-PRT2021, MAGOULES-2020} are linear. In \cite{DRPT-2020} the authors proved well-posedness results for damped linear wave equations and for the nonlinear Westervelt equation, they also established approximation theorems in specific situations. The present article now opens the way to prove the existence of optimal shapes for the nonlinear Westervelt equation within parametrized classes of bounded uniform domains with fractal boundaries. This is done in \cite{DHRPT-2021}, and the proof there relies on Theorems \ref{T:uniPoincare} and \ref{T:compact}. Theorem \ref{T:uniPoincare} also marks a slight difference between \cite{DRPT-2020,HINZ-2020} and the present article: There uniform Poincar\'e inequalities are obtained using Dirichlet conditions on some part of the boundary, here Dirichlet conditions are not necessary, we can instead rely on Theorem \ref{T:uniPoincare}.

There is a huge amount of literature on Poincar\'e inequalities, we mention only a few sources related to our text. A very general presentation can be found in \cite[Section 7]{Egert2015}. Poincar\'e inequalities with uniform constants for classes of bounded and uniformly Lipschitz domains were discussed in \cite{BoulkhemairChakib2007}, their Theorem 2 is a Lipschitz predecessor of our Theorem \ref{T:uniPoincare}. Poincar\'e inequalities with uniform constants for domains satisfying certain cone conditions were proved in \cite{Ruiz2012} and \cite{Thomas2015}. Optimal constants (in the sense of isoperimetry) for Poincar\'e inequalities involving trace terms on bounded Lipschitz domains were determined in \cite{BucurGiacominiTrebeschi2019}. 

Robin boundary value problems on arbitrary domains were studied in \cite{DANERS-2000}, based on former ideas of Maz'ya, \cite[Section 4.11]{MAZ'JA-1985}. Within this approach traces on the boundary are defined in a somewhat abstract manner and always considered with respect to the $(n-1)$-dimensional Hausdorff measure. The traces can 'disconnect' from the function on the domain, and Robin conditions can reduce to Dirichlet conditions, \cite[Remarks 3.5 (a) and (d)]{DANERS-2000}. Based on this approach, Robin problems for linear and nonlinear equations on varying domains were investigated in \cite{DancerDaners1997}, where it was shown that in the limit problem a different type of boundary condition can appear. Within our 'parametrized' setup such effects are excluded. An abstract potential theoretic approach to Robin problems on arbitrary domains was introduced in \cite{ARENDT-WARMA2003} (with H\"older regularity addressed in \cite{NYSTROM-1994,NYSTROM-1996,VS15}), and as far as only well-posedness is concerned, it contains the basic setup of the present paper as a quantitative special case (up to some technicalities). Using variational methods, \cite{BB2005, Braides,DALMASO-1993}, Robin problems on sequences of confined Lipschitz domains were studied in the article \cite{BucurGiacominiTrebeschi2016}, which partially inspired the present results. Under the assumption that the domains converge in the Hausdorff sense, their total volumes converge and the measures on the boundary are diffuse enough and bounded below by a (uniform) constant times the $(n-1)$-dimensional Hausdorff measure, the weak solutions of the individual problems were shown to converge to the weak solution of the limit problem, which is again Robin, \cite[Theorems 3.1 and 3.4]{BucurGiacominiTrebeschi2016}. Also norm resolvent convergence and the convergence of the corresponding eigenvalues were proved,  \cite[Theorem 3.2 and Corollary 3.5]{BucurGiacominiTrebeschi2016}. 
No uniform (in this case 'equi-Lipschitz') condition was imposed on the geometry of the domains. This is different in the present article, where the fixed parameter $\varepsilon$ forces a uniform 'worst case' control, although within the more general category of bounded uniform domains. The stability results in \cite{BucurGiacominiTrebeschi2016} were established using the fact that the indicators of Lipschitz domains with finite measure on the boundary are $BV$-functions; extension operators are not needed. Their proof also uses a Faber-Krahn inequality for Robin problems established in \cite{Daners2006}; nonlinear generalizations of this inequality were provided in \cite{BucurGiacomini2015} via regularity results for free discontinuity problems. A free discontinuity approach to shape optimization for Robin problems on varying domains was proposed in \cite{BucurGiacomini2016}. The original problem they addressed involves a very general nonlinear energy integral, from which the energy functional to be minimized is obtained by taking an infimum of $W^{1,p}$-functions on the respective domain, subject to a pinning ('obstacle type') condition on a given fixed open subset inside the domain. As already mentioned above, they, too, relaxed the original problem. For the relaxed model the original energy integral was replaced by a free discontinuity functional (involving jumps and traces in the sense of $BV$-functions), and the class of admissible shapes was enlarged to a collection of open domains $\Omega$ with countably rectifiable boundaries $\partial \Omega$ of finite $(n-1)$-dimensional Hausdorff measure. The existence of optimal shapes for the relaxed model was then proved in \cite[Theorem 3.2]{BucurGiacomini2016}. In the present article we do not insist on any rectifiability properties, the boundaries of optimal shapes may have any Hausdorff dimension in $[n-1,n)$, see Remark \ref{R:boundarycase}.

In Section \ref{S:traces} we collect preliminary material about traces of Sobolev functions on closed subsets. In Section \ref{S:Poincare} we prove the Poincar\'e inequalities with uniform constants. Section \ref{S:bvp} discusses generalized boundary value problems, energy functionals and a first compactness lemma. The Mosco convergence of energy functionals along varying domains is proved in Section \ref{S:Mosco}, also the stability result for weak solutions is shown there. Norm resolvent convergence and spectral convergence are stated and proved in Section \ref{S:eigenvalues}. In Section \ref{S:compactness} we define the parametrized classes of admissible triples and state the compactness results. The existence of optimal shapes is shown in Section \ref{S:opti}.

\section{Traces on closed subsets}\label{S:traces}

We collect preliminary facts on traces of Sobolev functions to closed subsets of $\mathbb{R}^n$. For any $1<p<+\infty$ and $\beta>0$ let $H^{\beta,p}(\mathbb{R}^n)$ denote the $L^p$-Bessel-potential space of order $\beta$, \cite{AH96, Triebel78, TRIEBEL-1997}, that is, the space of all $f\in \mathcal{S}'(\mathbb{R}^n)$ such that $((1+|\xi|^2)^{\beta/2}\hat{f})^\vee \in L^p(\mathbb{R}^n)$. Here $f\mapsto \hat{f}$ denotes the Fourier transform and $f\mapsto \check{f}$ its inverse, and $\mathcal{S}'(\mathbb{R}^n)$ is the space of tempered distributions. Endowed with the norm 
\[\|f\|_{H^{\beta,p}(\mathbb{R}^n)}:=\|((1+|\xi|^2)^{\beta/2}\hat{f})^\vee\|_{L^p(\mathbb{R}^n)}\] 
the space $H^{\beta,p}(\mathbb{R}^n)$ is Banach, and Hilbert if $p=2$.

We write $\lambda^n$ to denote the $n$-dimensional Lebesgue measure and $B(x,r)$ for the open ball in $\mathbb{R}^n$ of radius $r>0$ centered at $x$. Given $f\in L^1_{\loc}(\mathbb{R}^n)$, the limit 
\begin{equation}\label{E:redefinition}
\widetilde{f}(x)=\lim_{r\to 0}\frac{1}{\lambda^n(B(x,r))}\int_{B(x,r)}f(y)dy
\end{equation}
exists at $\lambda^n$-a.e. $x\in \mathbb{R}^n$. Suppose that $1<p<+\infty$ and $f\in H^{\beta,p}(\mathbb{R}^n)$. If $\beta >n/p$, then the function $f$ is H\"older continuous, \cite[2.8.1]{Triebel78}, the limit \eqref{E:redefinition} exists at any point $x$ and equals $f(x)$. If $0<\beta\leq n/p$, then it exists at $H^{\beta,p}(\mathbb{R}^n)$-quasi every $x\in\mathbb{R}^n$ and $\widetilde{f}$ defines a $H^{\beta,p}(\mathbb{R}^n)$-quasi continuous version of $f$, \cite[Theorem 6.2.1]{AH96}. 

Now let $\mu$ be a Borel measure with support $\Gamma=\supp\mu$ and satisfying the (local) upper regularity condition
\begin{equation}\label{E:upperdreg}
\mu(B(x,r))\leq c_dr^d, \quad x\in\Gamma,\quad 0<r\leq 1,
\end{equation}
with some $0< d\leq n$. This implies that the Hausdorff dimension of $\Gamma$ is at least $d$, \cite{FALCONER,MATTILA}, it also implies that $\mu$ is a locally finite measure. If condition (\ref{E:upperdreg}) holds and $\beta$ is large enough, then we can conclude that $\mu$ charges no set of zero $H^{\beta,p}(\mathbb{R}^n)$-capacity and consequently the limit in \eqref{E:redefinition} exists for $\mu$-a.e. $x\in \Gamma$. The following is a version of \cite[Theorem 4]{HINZ-2020}. We use the convention $1/0:=+\infty$, and we write $L^0(\Gamma,\mu)$ for the space of $\mu$-equivalence classes of Borel functions on $\Gamma$.

\begin{theorem}\label{ThTracecheap}
Let  $0< d\leq n$, $(n-d)/p<\beta$ and suppose that $\mu$ is a Borel measure with support $\Gamma=\supp\mu$ and satisfying \eqref{E:upperdreg}. Then 
\begin{equation}\label{E:traceopdef}
\operatorname{Tr}_\Gamma f:= \widetilde{f}
\end{equation}
defines a linear map $\operatorname{Tr}_\Gamma$ from $H^{\beta,p}(\mathbb{R}^n)$ into $L^0(\Gamma,\mu)$. If in addition $\beta\leq  n/p$  and $p<q<pd/(n-p\beta)$, then $\operatorname{Tr}_\Gamma$ is a bounded linear operator from $H^{\beta,p}(\mathbb{R}^n)$ into $L^q(\Gamma,\mu)$, and we have 
\[\left\|\operatorname{Tr}_\Gamma f\right\|_{L^q(\Gamma,\mu)}\leq c_{\operatorname{Tr}}\left\|f\right\|_{H^{\beta,p}(\mathbb{R}^n)},\quad  f\in H^{\beta,p}(\mathbb{R}^n),\] 
with a constant $c_{\mathrm{Tr}}>0$ depending only on $n$, $p$, $\beta$, $d$, $c_d$ and $q$. If $\Gamma$ is compact, then $\operatorname{Tr}_\Gamma:H^{\beta,p}(\mathbb{R}^n)\to L^q(\Gamma,\mu)$ is a compact operator.
\end{theorem}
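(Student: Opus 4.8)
The plan is to reduce the three assertions to the classical Adams--Maz'ya trace inequality for Bessel potential spaces (which underlies \cite[Theorem 4]{HINZ-2020}), a measure--capacity comparison, and a Rellich-type argument. \textbf{Step 1 (a priori bound on a dense class).} First I would prove $\|\widetilde f\|_{L^q(\Gamma,\mu)}\le c_{\mathrm{Tr}}\|f\|_{H^{\beta,p}(\mathbb R^n)}$ for $f\in\mathcal S(\mathbb R^n)$, which is dense in $H^{\beta,p}(\mathbb R^n)$ and on which $\widetilde f=f$ is the continuous representative. Write $f=G_\beta\ast g$ with $G_\beta$ the Bessel kernel and $\|g\|_{L^p(\mathbb R^n)}=\|f\|_{H^{\beta,p}(\mathbb R^n)}$; since $G_\beta\ge0$ one may assume $g\ge0$, and split $G_\beta\ast g$ into a near part $(G_\beta\mathbf 1_{\{|\cdot|\le1\}})\ast g$ and a far part $(G_\beta\mathbf 1_{\{|\cdot|>1\}})\ast g$. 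The far part is controlled by $\|g\|_{L^p}$ using the exponential decay of $G_\beta$ together with the at most $n$-dimensional growth $\mu(B(x,r))\le C(n,c_d)r^n$, $r>1$, which follows from \eqref{E:upperdreg} by a covering argument. For the near part one uses the pointwise bound $G_\beta\ast g(x)\le c\,(Mg(x))^{1-\beta p/n}\|g\|_{L^p}^{\beta p/n}$, $M$ the Hardy--Littlewood maximal operator (obtained by optimizing the splitting radius), or the equivalent weak-type capacitary estimate, and then integrates in the level parameter against $\mu$, invoking \eqref{E:upperdreg} in the form $\mu(B(x,r))\le c_d r^{\sigma}$, $r\le1$, with $\sigma:=q(n-p\beta)/p<d$ (this is where $q<pd/(n-p\beta)$ enters), together with a dyadic summation that needs $q\ge p$. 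Tracking the constants in the maximal inequality, the covering lemmas and H\"older's inequality yields $c_{\mathrm{Tr}}=c_{\mathrm{Tr}}(n,p,\beta,d,c_d,q)$. This step is the technical heart; the delicate points are the far part for a possibly unbounded $\Gamma$ (where only the local bound \eqref{E:upperdreg} is at hand) and the endpoint $\beta=n/p$, which requires the Trudinger--Moser/Adams exponential-integrability refinement.

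\textbf{Step 2 (no charge on null sets; extension to $H^{\beta,p}$).} Since $(n-d)/p<\beta$ means $n-p\beta<d$, any set $E$ of zero $H^{\beta,p}(\mathbb R^n)$-capacity has $\dim_H E\le n-p\beta<d$, hence $\mathcal H^{d}(E)=0$; covering $E\cap\Gamma$ by balls centred on $\Gamma$ and using \eqref{E:upperdreg} gives $\mu(E)=0$, so $\mu$ charges no set of zero $H^{\beta,p}(\mathbb R^n)$-capacity. Thus $\widetilde f$, defined $H^{\beta,p}(\mathbb R^n)$-quasi everywhere by \cite[Theorem 6.2.1]{AH96}, is well-defined $\mu$-a.e., and $\operatorname{Tr}_\Gamma$ is linear into $L^0(\Gamma,\mu)$ (when $\beta>n/p$ one simply takes the continuous representative). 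For $\beta\le n/p$ and arbitrary $f\in H^{\beta,p}(\mathbb R^n)$, approximate by $f_k\in\mathcal S(\mathbb R^n)$ with $f_k\to f$ in $H^{\beta,p}$: by Step 1 the $\widetilde f_k$ are Cauchy in $L^q(\Gamma,\mu)$, and along a subsequence converge $H^{\beta,p}(\mathbb R^n)$-q.e., hence $\mu$-a.e., to $\widetilde f$; therefore $\widetilde f\in L^q(\Gamma,\mu)$ and the estimate passes to the limit, which proves the first two assertions.

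\textbf{Step 3 (compactness for compact $\Gamma$).} Fix $\varphi\in C_c^\infty(\mathbb R^n)$ with $\varphi\equiv1$ on an open neighbourhood of $\Gamma$ and put $K:=\supp\varphi$. Then $\operatorname{Tr}_\Gamma f=\operatorname{Tr}_\Gamma(\varphi f)$, because $\varphi f-f$ vanishes near $\Gamma$, so its quasi-continuous representative vanishes $H^{\beta,p}(\mathbb R^n)$-q.e., hence $\mu$-a.e., on $\Gamma$. Choose $\beta'\in\big((n-d)/p,\beta\big)$ so close to $\beta$ that still $p<q<pd/(n-p\beta')$ (possible since $q<pd/(n-p\beta)$ strictly); by Steps 1--2, $\operatorname{Tr}_\Gamma\colon H^{\beta',p}(\mathbb R^n)\to L^q(\Gamma,\mu)$ is bounded. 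Multiplication by $\varphi$ maps $H^{\beta,p}(\mathbb R^n)$ boundedly into $\{g\in H^{\beta,p}(\mathbb R^n)\colon\supp g\subset K\}$, and the inclusion of this space into $H^{\beta',p}(\mathbb R^n)$ is compact: a sequence bounded in $H^{\beta,p}$ and supported in $K$ is precompact in $L^p(\mathbb R^n)$ by the Fr\'echet--Kolmogorov criterion (the common compact support gives tightness, the $H^{\beta,p}$ bound equicontinuity of translates), and the interpolation inequality $\|u\|_{H^{\beta',p}}\le C\|u\|_{L^p}^{1-\theta}\|u\|_{H^{\beta,p}}^{\theta}$ with $\theta=\beta'/\beta$ upgrades an $L^p$-convergent subsequence to one convergent in $H^{\beta',p}$. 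Hence, on $H^{\beta,p}(\mathbb R^n)$, the operator $\operatorname{Tr}_\Gamma$ factors as a bounded operator composed with a compact operator composed with a bounded operator, and is therefore compact.
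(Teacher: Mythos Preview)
Your argument is essentially correct and, in spirit, coincides with the paper's approach: the paper's proof is a one-line citation to \cite[Theorems 7.2.2 and 7.3.2 and Propositions 5.1.2, 5.1.3 and 5.1.4]{AH96}, and what you have done is unpack precisely those results---the Adams trace inequality, the capacity/measure comparison, and the compactness via a lower-order embedding. So there is no genuinely different route here; you are supplying the details behind the reference.

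Two small comments on Step~2. The Hausdorff-dimension claim ``$C_{\beta,p}(E)=0\Rightarrow \dim_H E\le n-p\beta$'' is correct but is the less efficient way to reach the conclusion $\mu(E)=0$; the cleaner route, already latent in your Step~1, is to observe that for a compact $E$ with $C_{\beta,p}(E)=0$ one can find $u_k\in C_c^\infty(\mathbb{R}^n)$ with $u_k\ge 1$ on a neighbourhood of $E$ and $\|u_k\|_{H^{\beta,p}}\to 0$, and then your Step~1 inequality gives $\mu(E)^{1/q}\le \|u_k\|_{L^q(\Gamma,\mu)}\le c_{\mathrm{Tr}}\|u_k\|_{H^{\beta,p}}\to 0$; this is exactly how Propositions~5.1.2--5.1.4 in \cite{AH96} are used in the paper's citation. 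Also, when you write ``covering $E\cap\Gamma$ by balls centred on $\Gamma$ and using \eqref{E:upperdreg} gives $\mu(E)=0$'', you are implicitly using the comparison $\mu\le c\,\mathcal{H}^d$ that follows from \eqref{E:upperdreg}, which is fine, but you should say so explicitly.

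In Step~3, the factorisation through $H^{\beta',p}$ with $\beta'<\beta$ close to $\beta$ is exactly the mechanism behind \cite[Theorem 7.3.2]{AH96}, so again you are on the same track as the cited reference.
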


\begin{proof}
The theorem follows from \cite[Theorems 7.2.2 and 7.3.2 and Propositions 5.1.2, 5.1.3 and 5.1.4]{AH96}.
\end{proof}

We write $\mathrm{Tr}_{\Gamma}(H^{\beta,p}(\mathbb{R}^n))$ for the image of $H^{\beta,p}(\mathbb{R}^n)$ under $\mathrm{Tr}_{\Gamma}$ in $L^0(\Gamma,\mu)$. The following is immediate from general theory.

\begin{corollary}\label{C:tracespace}
Let  $0< d\leq n$ and $(n-d)/p<\beta$. Suppose that $\mu$ is a Borel measure with support $\Gamma=\supp\mu$ and satisfying \eqref{E:upperdreg}. The map 
\begin{equation}\label{E:quotientnorm}
\varphi \mapsto \left\|\varphi\right\|_{\mathrm{Tr}_\Gamma(H^{\beta,p}(\mathbb{R}^n))}:=\inf\left\lbrace \left\|g\right\|_{H^{\beta,p}(\mathbb{R}^n)}:\  g\in H^{\beta,p}(\mathbb{R}^n), \mathrm{Tr}_\Gamma g=\varphi\ \text{$\mu$-a.e.}\right\rbrace 
\end{equation}
is a norm that makes $\mathrm{Tr}_\Gamma(H^{\beta,p}(\mathbb{R}^n))$ a Banach space, and for $p=2$ Hilbert. 
\end{corollary}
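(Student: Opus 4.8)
The plan is to realize $\mathrm{Tr}_\Gamma(H^{\beta,p}(\mathbb{R}^n))$ as a quotient of the Banach space $H^{\beta,p}(\mathbb{R}^n)$ by a closed subspace, which automatically yields a complete normed space. Concretely, by Theorem \ref{ThTracecheap} the operator $\mathrm{Tr}_\Gamma\colon H^{\beta,p}(\mathbb{R}^n)\to L^0(\Gamma,\mu)$ is well-defined and linear, so its kernel
\[
N_\Gamma:=\left\{g\in H^{\beta,p}(\mathbb{R}^n):\ \mathrm{Tr}_\Gamma g=0\ \text{$\mu$-a.e.}\right\}
\]
is a linear subspace, and the formula \eqref{E:quotientnorm} is precisely the usual quotient (semi)norm $\|\varphi\|=\inf\{\|g\|_{H^{\beta,p}(\mathbb{R}^n)}:\ g+N_\Gamma\ \text{maps to }\varphi\}$ on $H^{\beta,p}(\mathbb{R}^n)/N_\Gamma$, under the canonical linear bijection $H^{\beta,p}(\mathbb{R}^n)/N_\Gamma\cong \mathrm{Tr}_\Gamma(H^{\beta,p}(\mathbb{R}^n))$. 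First I would check that this linear bijection is genuine: two elements $g_1,g_2$ have $\mathrm{Tr}_\Gamma g_1=\mathrm{Tr}_\Gamma g_2$ $\mu$-a.e. if and only if $g_1-g_2\in N_\Gamma$, which is immediate from linearity of $\mathrm{Tr}_\Gamma$; hence the infimum in \eqref{E:quotientnorm} depends only on $\varphi$ and transports the quotient norm to $\mathrm{Tr}_\Gamma(H^{\beta,p}(\mathbb{R}^n))$.

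Next I would verify that $N_\Gamma$ is closed in $H^{\beta,p}(\mathbb{R}^n)$. Here I distinguish the two regimes covered by the preceding discussion. If $\beta>n/p$, then $H^{\beta,p}(\mathbb{R}^n)$ embeds continuously into the space of (bounded, H\"older) continuous functions, $\widetilde f=f$ everywhere, and restriction to the nonempty set $\Gamma$ is continuous into $C(\Gamma)$ (and into $L^0(\Gamma,\mu)$); thus $N_\Gamma$ is the preimage of $\{0\}$ under a bounded operator and is closed. If instead $(n-d)/p<\beta\le n/p$, then by the second part of Theorem \ref{ThTracecheap} we may fix some admissible exponent $q$ with $p<q<pd/(n-p\beta)$ (such $q$ exists because $\beta>(n-d)/p$ forces $pd/(n-p\beta)>p$), and $\mathrm{Tr}_\Gamma\colon H^{\beta,p}(\mathbb{R}^n)\to L^q(\Gamma,\mu)$ is bounded; again $N_\Gamma=\mathrm{Tr}_\Gamma^{-1}(\{0\})$ is closed. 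In either case $N_\Gamma$ is a closed subspace of the Banach space $H^{\beta,p}(\mathbb{R}^n)$, so the quotient $H^{\beta,p}(\mathbb{R}^n)/N_\Gamma$ is a Banach space with the quotient norm, and transporting along the bijection above shows that \eqref{E:quotientnorm} is a norm making $\mathrm{Tr}_\Gamma(H^{\beta,p}(\mathbb{R}^n))$ a Banach space.

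For the Hilbert case $p=2$, I would argue that $N_\Gamma$ is a closed subspace of the Hilbert space $H^{\beta,2}(\mathbb{R}^n)$, hence $H^{\beta,2}(\mathbb{R}^n)/N_\Gamma$ is canonically isometrically isomorphic to the orthogonal complement $N_\Gamma^\perp$ with its inherited inner product, and the quotient norm coincides with the Hilbert-space norm on $N_\Gamma^\perp$. Transporting this inner product through the linear bijection $\mathrm{Tr}_\Gamma\colon N_\Gamma^\perp\to \mathrm{Tr}_\Gamma(H^{\beta,2}(\mathbb{R}^n))$ equips the trace space with an inner product inducing exactly the norm \eqref{E:quotientnorm}, so it is a Hilbert space.

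The argument is essentially a packaging of standard functional analysis, so there is no serious obstacle; the only point requiring a moment's care is the closedness of $N_\Gamma$ in the regime $\beta\le n/p$, where one must invoke the boundedness statement of Theorem \ref{ThTracecheap} into some $L^q(\Gamma,\mu)$ (rather than merely the pointwise-defined map into $L^0(\Gamma,\mu)$, on which no topology is available) and confirm that the interval $(p,\,pd/(n-p\beta))$ is nonempty under the hypothesis $(n-d)/p<\beta$.
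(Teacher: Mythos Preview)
Your argument is correct and is precisely the standard quotient-space construction the paper has in mind when it says the corollary ``is immediate from general theory'': identify $\mathrm{Tr}_\Gamma(H^{\beta,p}(\mathbb{R}^n))$ with $H^{\beta,p}(\mathbb{R}^n)/N_\Gamma$, use the boundedness of $\mathrm{Tr}_\Gamma$ (from Theorem~\ref{ThTracecheap} or the H\"older embedding) to see $N_\Gamma$ is closed, and for $p=2$ invoke the orthogonal decomposition. There is nothing to add.
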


We briefly point out that the trace space $\mathrm{Tr}_\Gamma(H^{\beta,p}(\mathbb{R}^n))$ does not depend on the choice of an individual measure $\mu$ but only on the choice of an equivalence class of measures. Recall that if $\mu$ is a Borel measure charging no set of zero $H^{\beta,p}(\mathbb{R}^n)$-capacity, then a set $\Gamma_q\subset \mathbb{R}^n$ is called an \emph{$H^{\beta,p}(\mathbb{R}^n)$-quasi support} of $\mu$ if (i) we have $\mu(\mathbb{R}^n\setminus \Gamma_q)=0$, (ii) the set $\Gamma_q$ is $H^{\beta,p}(\mathbb{R}^n)$-quasi closed and (iii) if $\widetilde{\Gamma}_q$ is another set with these properties, then $\Gamma_q\subset \widetilde{\Gamma}_q$ $H^{\beta,p}(\mathbb{R}^n)$-quasi everywhere. See \cite[Section 2.9]{Fuglede71}. Two $H^{\beta,p}(\mathbb{R}^n)$-quasi supports of $\mu$ can differ only by a set of zero 
$H^{\beta,p}(\mathbb{R}^n)$-capacity. Since any closed set is $H^{\beta,p}(\mathbb{R}^n)$-quasi closed, we may always assume that 
an $H^{\beta,p}(\mathbb{R}^n)$-quasi support $\Gamma_q$ of $\mu$ satisfies $\Gamma_q\subset \Gamma=\supp\mu$. See \cite[Section 4.6]{FOT94} for the case $p=2$.

\begin{proposition}\label{P:tracespacedoesnotdepend}
Let  $0< d\leq n$ and $(n-d)/p<\beta$. Suppose that $\mu$ is a Borel measure with support $\Gamma=\supp\mu$ and satisfying \eqref{E:upperdreg}. If $\Gamma_q$ is an $H^{\beta,p}(\mathbb{R}^n)$-quasi support of $\mu$, then 
\[\left\|\varphi\right\|_{\mathrm{Tr}_\Gamma(H^{\beta,p}(\mathbb{R}^n))}=\inf\left\lbrace \left\|g\right\|_{H^{\beta,p}(\mathbb{R}^n)}:\  g\in H^{\beta,p}(\mathbb{R}^n), \ \widetilde{g}=\varphi\ \text{$H^{\beta,p}(\mathbb{R}^n)$-q.e. on $\Gamma_q$}\right\rbrace,\ \varphi\in H^{\beta,p}(\mathbb{R}^n). \]
If $\mu'$ is another Borel measure satisfying \eqref{E:upperdreg} (with a possibly different constant $c_d'>0$) and having $\Gamma_q$ as a quasi support, then the trace space $(\mathrm{Tr}_\Gamma(H^{\beta,p}(\mathbb{R}^n)),\left\|\cdot\right\|_{\mathrm{Tr}_\Gamma(H^{\beta,p}(\mathbb{R}^n))})$ defined using $\mu'$ is the same as the one defined using $\mu$.
\end{proposition}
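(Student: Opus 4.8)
The plan is to reduce both assertions to a single fact about $H^{\beta,p}(\mathbb{R}^n)$-quasi-continuous functions together with the minimality property of the quasi support. The point of departure is that, since $\mu$ satisfies (\ref{E:upperdreg}) with $(n-d)/p<\beta$, it charges no set of zero $H^{\beta,p}(\mathbb{R}^n)$-capacity (and the same holds for $\mu'$); hence for every $g\in H^{\beta,p}(\mathbb{R}^n)$ the trace $\mathrm{Tr}_\Gamma g=\widetilde g$ is well defined $\mu$-a.e.\ and, being the restriction of a quasi-continuous function, depends only on the $H^{\beta,p}(\mathbb{R}^n)$-q.e.\ class of $g$.

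I would first isolate the key lemma: for $u\in H^{\beta,p}(\mathbb{R}^n)$ one has $\widetilde u=0$ $\mu$-a.e.\ if and only if $\widetilde u=0$ $H^{\beta,p}(\mathbb{R}^n)$-q.e.\ on $\Gamma_q$. The implication ``$\Leftarrow$'' is clear because $\mu(\mathbb{R}^n\setminus\Gamma_q)=0$ and $\mu$ ignores sets of zero capacity. For ``$\Rightarrow$'', observe that $\{\widetilde u=0\}$ is $H^{\beta,p}(\mathbb{R}^n)$-quasi closed, hence so is $F:=\Gamma_q\cap\{\widetilde u=0\}$, and $\mu(\mathbb{R}^n\setminus F)=0$; the minimality property (iii) in the definition of an $H^{\beta,p}(\mathbb{R}^n)$-quasi support then forces $\Gamma_q\subset F$ q.e., i.e.\ $\widetilde u=0$ q.e.\ on $\Gamma_q$. (This is \cite[Section 2.9]{Fuglede71}, respectively \cite[Section 4.6]{FOT94} for $p=2$.) The lemma says precisely that the kernel of $\mathrm{Tr}_\Gamma\colon H^{\beta,p}(\mathbb{R}^n)\to L^0(\Gamma,\mu)$ coincides with $N_{\Gamma_q}:=\{g\in H^{\beta,p}(\mathbb{R}^n):\widetilde g=0\ \text{$H^{\beta,p}(\mathbb{R}^n)$-q.e.\ on }\Gamma_q\}$, a subspace depending on $\Gamma_q$ alone.

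Given the lemma, the first assertion follows by a coset argument. Fixing any $g_0\in H^{\beta,p}(\mathbb{R}^n)$ and applying the lemma to $u=g-g_0$ shows that, for $g\in H^{\beta,p}(\mathbb{R}^n)$,
\[
\mathrm{Tr}_\Gamma g=\mathrm{Tr}_\Gamma g_0\ \text{$\mu$-a.e.}\quad\Longleftrightarrow\quad\widetilde g=\widetilde{g_0}\ \text{$H^{\beta,p}(\mathbb{R}^n)$-q.e.\ on }\Gamma_q ,
\]
so the family of competitors in the infimum (\ref{E:quotientnorm}) and the family of competitors in the infimum on the right-hand side of the proposition coincide (reading the trace $\varphi$ through a representative $g_0$, which is legitimate since $\Gamma_q\subset\Gamma$); hence the two infima of $\|g\|_{H^{\beta,p}(\mathbb{R}^n)}$ agree. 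Equivalently, $\mathrm{Tr}_\Gamma$ induces an isometric isomorphism of $H^{\beta,p}(\mathbb{R}^n)/N_{\Gamma_q}$, equipped with the quotient norm, onto $(\mathrm{Tr}_\Gamma(H^{\beta,p}(\mathbb{R}^n)),\|\cdot\|_{\mathrm{Tr}_\Gamma(H^{\beta,p}(\mathbb{R}^n))})$.

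The second assertion is then immediate: since $\mu'$ also satisfies (\ref{E:upperdreg}) and has $\Gamma_q$ as a quasi support, the same argument applied to $\mu'$ shows that $\mathrm{Tr}_{\supp\mu'}$ has the \emph{same} kernel $N_{\Gamma_q}$ and induces an isometric isomorphism of the \emph{same} quotient $H^{\beta,p}(\mathbb{R}^n)/N_{\Gamma_q}$ onto the trace space built from $\mu'$; composing the two isomorphisms identifies the two trace spaces isometrically. Throughout, the only delicate point is the nontrivial implication of the lemma --- that a $\mu$-null level set of a quasi-continuous function is capacity-negligible on $\Gamma_q$ --- which is exactly the minimality encoded in the notion of quasi support; everything else is bookkeeping between ``$\mu$-a.e.'' and ``q.e.\ on $\Gamma_q$''.
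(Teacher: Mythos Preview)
Your proof is correct and follows essentially the same route as the paper, which simply cites the argument of \cite[Theorem~4.6.2]{FOT94} and notes that it carries over to $p\neq 2$; you have spelled out precisely that argument, with the key lemma (equivalence of ``$\mu$-a.e.'' and ``q.e.\ on $\Gamma_q$'' for quasi-continuous functions via the minimality of the quasi support) being exactly the core step there.
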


\begin{remark}\label{R:tracespacedoesnotdepend}
Suppose that if $\mu$ and $\mu'$ are two Borel measures equivalent to each other and both satisfying \eqref{E:upperdreg} (possibly with different multiplicative constants). Then any $H^{\beta,p}(\mathbb{R}^n)$-quasi support of $\mu$ is also one for $\mu'$ and vice versa.
\end{remark}

\begin{proof}
The first statement one can follow the corresponding arguments in the proof of \cite[Theorem 4.6.2]{FOT94}, they remain valid for $p\neq 2$. The second is an immediate consequence.
\end{proof}

In the case of finite measures we can conclude further properties.

\begin{corollary}\label{C:tracesimple}
Let  $0< d\leq n$ and $(n-d)/p<\beta$. Suppose that $\mu$ is a finite Borel measure with support $\Gamma=\supp\mu$ and satisfying \eqref{E:upperdreg}. Then $\operatorname{Tr}_\Gamma$ as in \eqref{E:traceopdef} defines a bounded linear operator from $H^{\beta,p}(\mathbb{R}^n)$ into $L^p(\Gamma,\mu)$ with operator norm depending only on $n$, $p$, $\beta$, $d$, $c_d$ and $\mu(\Gamma)$. This operator is compact if $\Gamma$ is. Seen as an operator from $H^{\beta,p}(\mathbb{R}^n)$ onto $\mathrm{Tr}_\Gamma(H^{\beta,p}(\mathbb{R}^n))$ normed by (\ref{E:quotientnorm}), the operator $\operatorname{Tr}_\Gamma$ is a linear contraction.
The space $\mathrm{Tr}_{\Gamma}(H^{\beta,p}(\mathbb{R}^n))$ is dense in $L^p(\Gamma,\mu)$.
\end{corollary}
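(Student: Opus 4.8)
The plan is to obtain all four assertions from Theorem \ref{ThTracecheap}, the definition (\ref{E:quotientnorm}) of the quotient norm, and standard facts about $L^p$-spaces of finite (hence Radon) measures, distinguishing the easy case $\beta>n/p$ from the main case $\beta\le n/p$. For the \emph{boundedness into $L^p(\Gamma,\mu)$}, I would first fix an exponent $q$ with $p<q<pd/(n-p\beta)$; such $q$ exists because $(n-d)/p<\beta$ forces $pd/(n-p\beta)>p$, and it can be chosen (say as the midpoint of that interval) to depend only on $n,p,\beta,d$. If $\beta\le n/p$, Theorem \ref{ThTracecheap} gives $\operatorname{Tr}_\Gamma\colon H^{\beta,p}(\mathbb{R}^n)\to L^q(\Gamma,\mu)$ bounded with norm $c_{\operatorname{Tr}}$ depending only on $n,p,\beta,d,c_d,q$; since $\mu$ is finite, Hölder's inequality gives the continuous embedding $L^q(\Gamma,\mu)\hookrightarrow L^p(\Gamma,\mu)$ with norm $\mu(\Gamma)^{1/p-1/q}$, and composing yields the claimed bound with constant $c_{\operatorname{Tr}}\,\mu(\Gamma)^{1/p-1/q}$, depending only on the listed quantities. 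If instead $\beta>n/p$, the Sobolev embedding $H^{\beta,p}(\mathbb{R}^n)\hookrightarrow L^\infty(\mathbb{R}^n)$ (constant depending on $n,p,\beta$, cf. \cite[2.8.1]{Triebel78}) together with $\widetilde f=f$ pointwise and $\|\operatorname{Tr}_\Gamma f\|_{L^p(\Gamma,\mu)}=\|f\|_{L^p(\Gamma,\mu)}\le\|f\|_\infty\,\mu(\Gamma)^{1/p}$ gives the same conclusion.

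For \emph{compactness when $\Gamma$ is compact}, in the case $\beta\le n/p$ Theorem \ref{ThTracecheap} already states that $\operatorname{Tr}_\Gamma\colon H^{\beta,p}(\mathbb{R}^n)\to L^q(\Gamma,\mu)$ is compact, and composing with the bounded embedding $L^q(\Gamma,\mu)\hookrightarrow L^p(\Gamma,\mu)$ preserves compactness. In the case $\beta>n/p$ I would argue that, by \cite[2.8.1]{Triebel78}, bounded subsets of $H^{\beta,p}(\mathbb{R}^n)$ are equibounded and equi-Hölder, hence equicontinuous; by Arzelà--Ascoli their restrictions to the compact set $\Gamma$ form a precompact subset of $C(\Gamma)$, and since $C(\Gamma)\hookrightarrow L^p(\Gamma,\mu)$ continuously (as $\mu$ is finite), $\operatorname{Tr}_\Gamma$ is compact here as well.

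The \emph{contraction property onto the trace space} is immediate from (\ref{E:quotientnorm}): for $f\in H^{\beta,p}(\mathbb{R}^n)$ one may choose $g=f$ in the infimum defining $\|\operatorname{Tr}_\Gamma f\|_{\mathrm{Tr}_\Gamma(H^{\beta,p}(\mathbb{R}^n))}$, whence $\|\operatorname{Tr}_\Gamma f\|_{\mathrm{Tr}_\Gamma(H^{\beta,p}(\mathbb{R}^n))}\le\|f\|_{H^{\beta,p}(\mathbb{R}^n)}$; surjectivity onto $\mathrm{Tr}_\Gamma(H^{\beta,p}(\mathbb{R}^n))$ holds by definition of that image.

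Finally, for \emph{density in $L^p(\Gamma,\mu)$} I would note that $C_c^\infty(\mathbb{R}^n)\subset H^{\beta,p}(\mathbb{R}^n)$, and that for continuous $f$ the averages in (\ref{E:redefinition}) converge to $f$ at every point, so $\operatorname{Tr}_\Gamma f=f|_\Gamma$; hence $\mathrm{Tr}_\Gamma(H^{\beta,p}(\mathbb{R}^n))$ contains $\{f|_\Gamma:f\in C_c^\infty(\mathbb{R}^n)\}$. Since $\mu$ is a finite Borel measure on $\mathbb{R}^n$ it is Radon, so $C_c(\mathbb{R}^n)$ is dense in $L^p(\mathbb{R}^n,\mu)$, and mollification (uniform convergence with supports in a fixed compact set, together with $\mu(\mathbb{R}^n)<\infty$) shows $C_c^\infty(\mathbb{R}^n)$ is dense there too; as $\mu(\mathbb{R}^n\setminus\Gamma)=0$ identifies $L^p(\mathbb{R}^n,\mu)$ with $L^p(\Gamma,\mu)$, the set $\{f|_\Gamma:f\in C_c^\infty(\mathbb{R}^n)\}$, hence a fortiori $\mathrm{Tr}_\Gamma(H^{\beta,p}(\mathbb{R}^n))$, is dense in $L^p(\Gamma,\mu)$. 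None of these steps is deep; the content sits in Theorem \ref{ThTracecheap}. The only mildly delicate points are keeping the operator norm dependent on the listed data only (which is why $q$, and with it $\mu(\Gamma)^{1/p-1/q}$, must be fixed in terms of $n,p,\beta,d$) and carrying the two regimes $\beta\le n/p$ and $\beta>n/p$ through all four statements; I do not anticipate a genuine obstacle.
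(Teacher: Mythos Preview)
Your proposal is correct and follows essentially the same approach as the paper: split into the regimes $\beta>n/p$ (Sobolev embedding into H\"older functions plus Arzel\`a--Ascoli) and $(n-d)/p<\beta\le n/p$ (Theorem~\ref{ThTracecheap} with an auxiliary $q>p$ and the H\"older embedding $L^q\hookrightarrow L^p$ for finite $\mu$), and read off contractivity from the quotient norm. The only notable difference is in the density step: the paper invokes Stone--Weierstrass, whereas you argue directly that $C_c^\infty(\mathbb{R}^n)$ is dense in $L^p(\mathbb{R}^n,\mu)$ because finite Borel measures on $\mathbb{R}^n$ are Radon---your route is slightly more self-contained and does not implicitly rely on $\Gamma$ being compact.
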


\begin{proof}
For $\beta>n/p$ the pointwise redefinition (\ref{E:redefinition}) agrees with the original function, and by the Sobolev embedding
of $H^{\beta,p}(\mathbb{R}^n)$ into a space of bounded H\"older continuous functions the operator $\operatorname{Tr}_\Gamma$ is bounded from $H^{\beta,p}(\mathbb{R}^n)$ into $L^p(\Gamma,\mu)$ because $\mu$ is finite. If $\Gamma$ is compact, $\operatorname{Tr}_\Gamma$ is compact by Arzela-Ascoli. If $(n-d)/p<\beta\leq  n/p$ we can find some $q>p$ such that Theorem \ref{ThTracecheap} applies, and since $\mu$ is finite, the embedding of $L^q(\Gamma,\mu)$ into $L^p(\Gamma,\mu)$ is bounded. The contractivity is clear from (\ref{E:quotientnorm}). The claimed density follows from the fact that $C_c^\infty(\mathbb{R}^n)$ is dense in $W^{1,2}(\mathbb{R}^n)$ together with the Stone-Weierstrass theorem.
\end{proof}

\begin{remark}
It is well-known that under stronger assumptions on $\mu$ one can identify the trace space $\mathrm{Tr}_{\Gamma}(H^{\beta,p}(\mathbb{R}^n))$ explicitly as a Besov space, \cite{JONSSON-1994, JONSSON-2009,JONSSON-1984}, see also \cite[Section 5.1]{HINZ-2020}. 
\end{remark}

For later use we fix a simple observation on the consistency of trace procedures.
\begin{corollary}
Let  $0< d\leq n$ and $(n-d)/p<\beta$. Suppose that $\mu$ is a finite Borel measure with support $\Gamma=\supp\mu$ and satisfying \eqref{E:upperdreg}. If $\Gamma'$ is a closed subset of $\Gamma$ with $\mu(\Gamma')>0$ then $\operatorname{Tr}_{\Gamma'}$ is a bounded linear operator from $H^{\beta,p}(\mathbb{R}^n)$ into $L^p(\Gamma',\mu)$, and for any $f\in H^{\beta,p}(\mathbb{R}^n)$ we have $\operatorname{Tr}_{\Gamma'}f=\operatorname{Tr}_{\Gamma}f|_{\Gamma'}$ $\mu$-a.e. on $\Gamma'$.
\end{corollary}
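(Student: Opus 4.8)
The plan is to reduce everything to the two key facts already established: first, that $\operatorname{Tr}_{\Gamma'}$ is well-defined and bounded into $L^p(\Gamma',\mu)$ by Corollary \ref{C:tracesimple} applied to the restricted measure, and second, that both $\operatorname{Tr}_{\Gamma'}f$ and $\operatorname{Tr}_{\Gamma}f$ are pointwise redefinitions of the same function $f$ via \eqref{E:redefinition}, so they must agree wherever both limits exist. First I would observe that $\Gamma'$ is closed with $\supp(\mu|_{\Gamma'})=\Gamma'$ since $\mu(\Gamma')>0$ and $\Gamma'\subset\Gamma$, and that the restricted measure $\mu|_{\Gamma'}$ inherits the upper $d$-regularity \eqref{E:upperdreg} from $\mu$ with the same constant $c_d$ (indeed $\mu|_{\Gamma'}(B(x,r))\leq\mu(B(x,r))\leq c_dr^d$), and remains finite. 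Hence Corollary \ref{C:tracesimple} with $\mu$ replaced by $\mu|_{\Gamma'}$ yields that $\operatorname{Tr}_{\Gamma'}\colon H^{\beta,p}(\mathbb{R}^n)\to L^p(\Gamma',\mu|_{\Gamma'})=L^p(\Gamma',\mu)$ is a bounded linear operator.

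For the consistency statement, I would argue as follows. By definition \eqref{E:traceopdef}, for $f\in H^{\beta,p}(\mathbb{R}^n)$ we have $\operatorname{Tr}_{\Gamma}f=\widetilde f$ as an element of $L^0(\Gamma,\mu)$ and $\operatorname{Tr}_{\Gamma'}f=\widetilde f$ as an element of $L^0(\Gamma',\mu)$, where $\widetilde f$ in both cases is the precise representative from \eqref{E:redefinition} — the same pointwise-defined function on $\mathbb{R}^n$. Since $\beta$ satisfies $(n-d)/p<\beta$, the discussion preceding Theorem \ref{ThTracecheap} shows $\mu$ (and hence $\mu|_{\Gamma'}$) charges no set of zero $H^{\beta,p}(\mathbb{R}^n)$-capacity, so the limit in \eqref{E:redefinition} exists for $\mu$-a.e. $x\in\Gamma$, in particular for $\mu$-a.e. $x\in\Gamma'$. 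Thus the restriction to $\Gamma'$ of the $\mu$-equivalence class $\operatorname{Tr}_{\Gamma}f$ coincides $\mu$-a.e. on $\Gamma'$ with the $\mu$-equivalence class $\operatorname{Tr}_{\Gamma'}f$, which is precisely the claim $\operatorname{Tr}_{\Gamma'}f=\operatorname{Tr}_{\Gamma}f|_{\Gamma'}$ $\mu$-a.e.\ on $\Gamma'$.

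The only point requiring a little care — and what I'd regard as the main (minor) obstacle — is checking that nothing subtle happens when passing to the quasi-support and to the $\mu$-equivalence class. Strictly, $\operatorname{Tr}_\Gamma f$ is an equivalence class, so "restriction to $\Gamma'$" must be interpreted correctly; but since the underlying pointwise object $\widetilde f$ is genuinely a function defined quasi-everywhere (hence $\mu$-a.e.\ and $\mu|_{\Gamma'}$-a.e.), restricting the class is unambiguous and compatible with the two trace constructions. I would spell this out in one sentence and conclude. Everything else is immediate from the already-proved corollaries, so no further computation is needed.
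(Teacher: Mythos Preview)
Your proof is correct and follows essentially the same approach as the paper's: verify that $\mu|_{\Gamma'}$ inherits the upper $d$-regularity \eqref{E:upperdreg}, apply Corollary \ref{C:tracesimple}, and use that both traces are defined pointwise via the same precise representative $\widetilde{f}$ from \eqref{E:redefinition}. One small caveat (shared by the paper's own proof): your justification that $\supp(\mu|_{\Gamma'})=\Gamma'$ does not follow merely from $\mu(\Gamma')>0$ and $\Gamma'\subset\Gamma$ (e.g.\ $\Gamma'$ could contain an isolated $\mu$-null point), but since $\mu(\Gamma'\setminus\supp(\mu|_{\Gamma'}))=0$ this is harmless at the level of $L^p(\Gamma',\mu)$.
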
 
\begin{proof}
Since the closed set $\Gamma'$ is the support of the measure $\mu':=\mu|_{\Gamma'}$, which satisfies (\ref{E:upperdreg}) at all $x\in \Gamma'$, the claims follow from Corollary \ref{C:tracesimple} and (\ref{E:traceopdef}).
\end{proof}

Now let $\Omega\subset \mathbb{R}^n$ be a Lipschitz domain. For $1<p<+\infty$ and $\beta>0$ we write 
\[H^{\beta,p}(\Omega)=\{f\in D'(\Omega): \text{$f=g|_\Omega$ for some $g\in H^{\beta,p}(\mathbb{R}^n)$}\},\] 
where $D'(\Omega)$ denotes the space of Schwartz distributions on $\Omega$. Endowed with the norm  
\[\left\|f\right\|_{H^{\beta,p}(\Omega)}:=\inf \{ \left\|g\right\|_{H^{\beta,p}(\mathbb{R}^n)}:\ g\in H^{\beta,p}(\mathbb{R}^n), f=g|_\Omega\ \text{in $D'(\Omega)$}\}.\] 
it becomes a Banach space (and Hilbert for $p=2$). See for instance \cite[Section 2.2]{Triebel2002}. 

Suppose that $\mu$ is a finite Borel measure as specified in Corollary \ref{C:tracesimple} and that $\Gamma=\supp\:\mu\subset \overline{\Omega}$. Given $f\in H^{\beta,p}(\Omega)$, we set 
\[\mathrm{Tr}_{\Omega,\Gamma}f:=\mathrm{Tr}_{\Gamma}g,\]
where $g$ is an element of $H^{\beta,p}(\mathbb{R}^n)$ such that $f=g|_\Omega$ in $D'(\Omega)$. By the arguments used to prove \cite[Theorem 6.1]{Biegert2009} one can see that $\mathrm{Tr}_{\Omega,\Gamma}$ is a well-defined linear map
$\mathrm{Tr}_{\Omega,\Gamma}:H^{\beta,p}(\Omega)\to L^p(\Gamma,\mu)$ in the sense that if $g,h\in H^{\beta,p}(\mathbb{R}^n)$ satisfy $g=h$ $\lambda^n$-a.e. in $\Omega$, then $\mathrm{Tr}_{\Gamma}g=\mathrm{Tr}_{\Gamma}h$ $\mu$-a.e. on $\Gamma$. It is a bounded linear operator, and the image $\mathrm{Tr}_{\Omega,\Gamma}(H^{\beta,p}(\Omega))$ agrees with $\mathrm{Tr}_\Gamma(H^{\beta,p}(\mathbb{R}^n))$.

Later we will make use of the following variant of \cite[Theorem 5]{HINZ-2020} on the convergence of integrals of traces in a fixed confinement $D$. In contrast to the result there we now only assume weak convergence of functions.

\begin{lemma}\label{L:traceconvergence}  
Let $D\subset\mathbb{R}^n$ be a bounded Lipschitz domain, $0< d\leq n$ and $(n-d)/p<\beta$. Let $(\mu_m)_m$ be a sequence of Borel measures with supports $\Gamma_m=\supp\mu_m$ contained in $\overline{D}$ and such that \eqref{E:upperdreg} holds for all $m$ with the same constant $c_d$. Suppose that $(\mu_m)_m$ converges weakly to a Borel measure $\mu$. Then $\Gamma:=\supp\mu$ is contained in $\overline{D}$, and if $(v_m)_m\subset H^{\beta,p}(D)$ converges to some $v$ weakly in $H^{\beta,p}(D)$, then 
	\[\lim_{m\to\infty}\int_{\Gamma_{m}} \mathrm{Tr}_{D,\Gamma_m}v_{m}\:d\mu_{m} =\int_{\Gamma} \mathrm{Tr}_{D,\Gamma} v\:d\mu\]
	and
	\[\lim_{m\to\infty}\int_{\Gamma_{m}} |\mathrm{Tr}_{D,\Gamma_m}v_{m}|^p\:d\mu_{m} =\int_{\Gamma} |\mathrm{Tr}_{D,\Gamma} v|^p \:d\mu.\]
\end{lemma}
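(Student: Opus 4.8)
The plan is to reduce the convergence of the trace integrals to a compactness statement for the trace operator combined with the weak convergence of the measures. First I would pick a compact set $K\subset\mathbb{R}^n$ (say a closed ball) that contains $\overline{D}$ in its interior, so that all the $\Gamma_m$ and $\Gamma$ lie in $K$; since $\overline D$ is compact and each $\supp\mu_m\subset\overline D$, weak convergence $\mu_m\to\mu$ forces $\supp\mu\subset\overline D$ as well. Using a bounded linear extension operator $E\colon H^{\beta,p}(D)\to H^{\beta,p}(\mathbb{R}^n)$ for the Lipschitz domain $D$, and then multiplying by a fixed cutoff $\chi\in C_c^\infty(\mathbb{R}^n)$ with $\chi\equiv 1$ on $\overline D$, I can replace $v_m$ by functions $\widetilde v_m:=\chi\,Ev_m\in H^{\beta,p}(\mathbb{R}^n)$ supported in $K$, with $\mathrm{Tr}_{D,\Gamma_m}v_m=\mathrm{Tr}_{\Gamma_m}\widetilde v_m$, and $\widetilde v_m\rightharpoonup\widetilde v:=\chi\,Ev$ weakly in $H^{\beta,p}(\mathbb{R}^n)$ (a bounded linear operator is weak-to-weak continuous).

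Next, the crucial point: because all $\widetilde v_m$ are supported in the fixed compact $K$ and $\Gamma_m\subset K$ with a \emph{uniform} upper $d$-regularity constant $c_d$, I would invoke the compactness in Theorem \ref{ThTracecheap} (and Corollary \ref{C:tracesimple}) to upgrade the weak convergence $\widetilde v_m\rightharpoonup\widetilde v$ to a convergence statement suitable for integrating against the $\mu_m$. Concretely, since $H^{\beta,p}(\mathbb{R}^n)$ embeds compactly into, say, $L^q$ on compacts (or since $\mathrm{Tr}_K$ is compact into $L^q(K,\cdot)$ for an appropriate reference measure, e.g.\ via restriction to a large ball and the compact embedding $H^{\beta,p}\hookrightarrow L^q_{\mathrm{loc}}$), weak convergence plus boundedness yields that $\widetilde v_m\to\widetilde v$ \emph{strongly} in $L^q(K)$ along a subsequence, hence (by uniqueness of the limit) along the whole sequence. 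More useful for the measure integrals is the following: the quasicontinuous representatives $\widetilde{\widetilde v}_m$ converge to $\widetilde{\widetilde v}$ in a sense that is compatible with all measures satisfying \eqref{E:upperdreg} with constant $c_d$; this is exactly the content underlying \cite[Theorem 5]{HINZ-2020}, and here I would adapt that argument to only weak convergence of $v_m$, using the compactness of the embedding to pass from weak to (subsequential, hence full) strong convergence of representatives before invoking weak convergence of $\mu_m$.

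Then I would combine the two ingredients. For the linear integral, write
\[
\int_{\Gamma_m}\mathrm{Tr}_{\Gamma_m}\widetilde v_m\,d\mu_m-\int_\Gamma\mathrm{Tr}_\Gamma\widetilde v\,d\mu
=\int_K(\widetilde{\widetilde v}_m-\widetilde{\widetilde v})\,d\mu_m+\Bigl(\int_K\widetilde{\widetilde v}\,d\mu_m-\int_K\widetilde{\widetilde v}\,d\mu\Bigr),
\]
where $\widetilde{\widetilde v}$ denotes a bounded (or $L^q$) quasicontinuous representative. The second bracket tends to $0$ by weak convergence $\mu_m\to\mu$ (the representative can be taken continuous up to a small-capacity, hence small-$\mu_m$-measure, exceptional set — this is the standard approximation argument, choosing $\varphi\in C_c(K)$ close to $\widetilde{\widetilde v}$ in capacity and using $\sup_m\mu_m(K)<\infty$ from \eqref{E:upperdreg}). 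The first term is controlled by $\|\widetilde{\widetilde v}_m-\widetilde{\widetilde v}\|_{L^q(\Gamma_m,\mu_m)}\mu_m(K)^{1/q'}$, which goes to $0$ by the Theorem \ref{ThTracecheap} trace bound applied to $\widetilde v_m-\widetilde v$ together with the strong $H^{\beta,p}$-convergence one extracts on subsequences from compactness — and since the limit is independent of the subsequence, the full sequence converges. For the second identity one applies the same scheme to $|\mathrm{Tr}_{\Gamma_m}\widetilde v_m|^p$, using $\big||a|^p-|b|^p\big|\le p(|a|^{p-1}+|b|^{p-1})|a-b|$ and Hölder with the uniform $L^q$-trace bounds ($q>p$) to handle the cross terms.

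\textbf{Main obstacle.} The delicate step is the passage from \emph{weak} convergence $v_m\rightharpoonup v$ in $H^{\beta,p}(D)$ to something strong enough to integrate against the varying measures $\mu_m$: weak convergence alone does not control the trace integrals. The resolution is that the trace operator from $H^{\beta,p}$ into $L^q(\Gamma_m,\mu_m)$ is \emph{uniformly} bounded and, on the fixed compact confinement, factors through a \emph{compact} embedding, so bounded weakly convergent sequences are in fact strongly convergent (after extracting, hence fully, by uniqueness of limits); one must be careful that this compactness is uniform in $m$, which is guaranteed precisely because the $\Gamma_m$ live in one compact set and share the regularity constant $c_d$. A secondary technical point is justifying the interchange "strong convergence of representatives $+$ weak convergence of measures $\Rightarrow$ convergence of integrals," which requires the standard capacitary approximation of quasicontinuous functions by continuous ones together with the uniform mass bound $\sup_m\mu_m(\overline D)<\infty$.
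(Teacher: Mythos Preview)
Your splitting strategy and the treatment of the $p$-th power via the algebraic inequality are essentially those of the paper. However, there is a genuine gap in the step where you control $\int_K(\widetilde{\widetilde v}_m-\widetilde{\widetilde v})\,d\mu_m$. You claim this follows from the trace bound in Theorem~\ref{ThTracecheap} ``together with the strong $H^{\beta,p}$-convergence one extracts on subsequences from compactness.'' But weak convergence in $H^{\beta,p}$ does not yield strong convergence in $H^{\beta,p}$; the identity on $H^{\beta,p}$ is not compact. Nor does the compactness of each individual trace operator $\mathrm{Tr}_{\Gamma_m}:H^{\beta,p}(\mathbb{R}^n)\to L^q(\Gamma_m,\mu_m)$ help, because the target space varies with $m$: the standard ``compact operators send weakly convergent to strongly convergent'' argument requires a \emph{fixed} operator. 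Your uniform trace bound at level $\beta$ only gives $\|\mathrm{Tr}_{\Gamma_m}(\widetilde v_m-\widetilde v)\|_{L^q(\Gamma_m,\mu_m)}\leq c\,\|\widetilde v_m-\widetilde v\|_{H^{\beta,p}}$, and the right-hand side need not tend to zero.

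The missing idea, which is exactly what the paper uses, is to introduce an intermediate exponent $(n-d)/p<\beta'<\beta$. On the bounded Lipschitz domain $D$ the embedding $H^{\beta,p}(D)\hookrightarrow H^{\beta',p}(D)$ is compact, so $v_m\rightharpoonup v$ weakly in $H^{\beta,p}(D)$ implies $v_m\to v$ strongly in $H^{\beta',p}(D)$. Since $\beta'$ still satisfies $(n-d)/p<\beta'$, the trace estimate of Corollary~\ref{C:tracesimple} applies at level $\beta'$ with a constant independent of $m$, yielding $\|\mathrm{Tr}_{D,\Gamma_m}(v_m-v)\|_{L^p(\Gamma_m,\mu_m)}\leq c_{\mathrm{Tr}}\|v_m-v\|_{H^{\beta',p}(D)}\to 0$. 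This closes the gap. As a minor point, your handling of the second bracket via capacitary approximation of quasicontinuous functions is more elaborate than necessary: the paper simply approximates $v$ in $H^{\beta,p}(D)$ by smooth $\varphi_j\in C_c^\infty(\overline{D})$ and uses weak convergence of measures on these directly, which avoids any capacity argument.
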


\begin{proof} Note first that since all $\mu_m$ satisfy (\ref{E:upperdreg}) and are supported in the compact set $\overline{D}$, we have $\sup_m\mu_m(\mathbb{R}^n)<+\infty$. The fact that $\Gamma\subset \overline{D}$ is clear from weak convergence. To see the stated limit relations, let $(\varphi_j)_j \subset C_c^\infty(\overline{D})$ be a sequence converging to $v$ in $H^{\beta,p}(D)$. Similarly as in the proof of \cite[Theorem 5.2]{CAPITANELLI-2010}, we observe that
	\begin{multline}
	\left|\int_{\Gamma_m} \mathrm{Tr}_{D,\Gamma_m} v_{m} \:d\mu_{m} -\int_{\Gamma} \mathrm{Tr}_{D,\Gamma} v  \:d\mu \right|\\
	\le \left|\int_{\Gamma_{m}} \mathrm{Tr}_{D,\Gamma_{m}} v_{m}  \:d\mu_{m} -\int_{\Gamma_{m}} \mathrm{Tr}_{D,\Gamma_{m}} v  \:d\mu_{m}\right|
	+\left|\int_{\Gamma_{m}} \mathrm{Tr}_{D,\Gamma_{m}} v \: d\mu_{m} -\int_{\Gamma_{m}} \varphi_{j} \: d\mu_{m}\right| \\
	+\left|\int_{\Gamma_{m}} \varphi_{j}\: d\mu_{m} -\int_{\Gamma} \varphi_{j} \: d\mu\right|
	+\left|\int_{\Gamma} \varphi_{j} \:d\mu -\int_{\Gamma}\mathrm{Tr}_{\Gamma} v \: d\mu\right|.\label{EqBigEstInt-}
	\end{multline}
	Now let $(n-d)/p<\beta'<\beta$. By H\"older's inequality and Corollary \ref{C:tracesimple} the first term on the right hand side of \eqref{EqBigEstInt-} is bounded by  
	\begin{align}
	\int_{\Gamma_m} |\mathrm{Tr}_{D,\Gamma_m}(v_m-v)|\:d\mu_m &\leq \mu_m(\mathbb{R}^n)^{1/q}\left\|\mathrm{Tr}_{D,\Gamma_m}(v_m-v)\right\|_{L^p(\Gamma_m,\mu_m)}\notag\\
	&\leq c_{\mathrm{Tr}}\:\mu_m(\mathbb{R}^n)^{1/q}\left\|v_m-v\right\|_{H^{\beta',p}(D)}\notag
	\end{align}
	with $c_{\mathrm{Tr}}>0$ independent of $m$. Since the embedding $H^{\beta,p}(D) \subset H^{\beta',p}(D)$ is compact, \cite[Theorem 2.7]{Triebel2002}, this goes to zero as $m\to \infty$. When $j\to \infty$ the second term and the last in~\eqref{EqBigEstInt-} converge to zero uniformly in $m$.
	The third term converges to zero as $m\to\infty$ by weak convergence of measures.
	
	The second statement follow from a slight modification of the proof of \cite[Theorem 5]{HINZ-2020}: We have an estimate analogous to (\ref{EqBigEstInt-}), but with $|\mathrm{Tr}_{D,\Gamma_m} v_m|^p$, $|\mathrm{Tr}_{D,\Gamma_m} v|^p$ and $|\varphi_j|^p$ in place of $\mathrm{Tr}_{D,\Gamma_m} v_m$, $\mathrm{Tr}_{D,\Gamma_m} v$ and $\varphi_j$. The convergence of the last three summands on the right hand side of this inequality follows from the uniform norm bound on the trace operator and the weak convergence of the measures. For the first summand we can use the mean value theorem for $s\mapsto s^p$, the reverse triangle inequality and H\"older's inequality w.r.t. $\mu_m$ to see that
\begin{align}\label{E:pvariant}
\left|\int_{\Gamma_m} \right. &\left. |\mathrm{Tr}_{D,\Gamma_m} v_m|^p\:d\mu_m-\int_{\Gamma_m}|\mathrm{Tr}_{D,\Gamma_m} v|^p\:d\mu_m\right|\\
&\leq p\:\int_{\Gamma_m}\Big||\mathrm{Tr}_{D,\Gamma_m} v_m|-|\mathrm{Tr}_{D,\Gamma_m} v_m|\Big|\Big(|\mathrm{Tr}_{D,\Gamma_m} v_m|+|\mathrm{Tr}_{D,\Gamma_m} v|\Big)^{p-1}\:d\mu_m\notag\\
&\leq p\left(\int_{\Gamma_m}|\mathrm{Tr}_{D,\Gamma_m} (v_m-v)|^p\:d\mu_m\right)^{1/p}\left(\int_{\Gamma_m}\Big(|\mathrm{Tr}_{D,\Gamma_m} v_m|+|\mathrm{Tr}_{D,\Gamma_m} v|\Big)^p\:d\mu_m\right)^{(p-1)/p}.\notag
\end{align}

Again by Corollary \ref{C:tracesimple} we have 
\[\max\left\lbrace \sup_m\left\|\mathrm{Tr}_{D,\Gamma_m} v_m\right\|_{L^p(\Gamma_m,\mu_m)}, \sup_m\left\|\mathrm{Tr}_{D,\Gamma_m} v\right\|_{L^p(\Gamma_m,\mu_m)}\right\rbrace\leq  c_{\mathrm{Tr}}'\sup_m\left\|v_m\right\|_{H^{\beta,p}(D)}\]
with $c_{\mathrm{Tr}}'>0$ independent of $m$. Using the same compact embedding as above we see that 
\[\lim_{m\to \infty} \left\|\mathrm{Tr}_{D,\Gamma_m} (v_m-v)\right\|_{L^p(\Gamma_m,\mu_m)}=0.\] 
Consequently also (\ref{E:pvariant}) converges to zero.
\end{proof}

Because the matter is systematically close to the preceding, we briefly discuss linear extension operators for the case $p=2$. This discussion will not be needed in later sections. Let $\mu$ be a nonzero Borel measure on $\mathbb{R}^n$ satisfying (\ref{E:upperdreg}) whose support $\Gamma=\supp\mu$ is a subset of $\mathbb{R}^n$. Given $\beta>0$ let $\mathring{H}^{\beta,2}(\mathbb{R}^n\setminus \Gamma)$ be the closure of $C_c^\infty(\mathbb{R}^n\setminus \Gamma)$ in the Hilbert space $H^{\beta,2}(\mathbb{R}^n)$, it coincides with the subspace of all elements $v$ such that $\widetilde{v} =0$ $H^{\beta,2}(\mathbb{R}^n)$-q.e. on $\Gamma$, \cite[Corollary 2.3.1]{FOT94}. Suppose that $0< d\leq n$, $(n-d)/2<\beta$ and that $\varphi\in \mathrm{Tr}_\Gamma(H^{\beta,2} (\mathbb{R}^n))$. For any $\alpha\geq 0$ the generalized Dirichlet problem 
\begin{equation}\label{E:Dirichletfortrace}
\begin{cases}
(\alpha-\Delta)^\beta u&=0\quad \text{on $\mathbb{R}^n\setminus \Gamma$}\\
\qquad \mathrm{Tr}_\Gamma u & =\varphi\quad \text{on $\Gamma$}
\end{cases}
\end{equation}
can be made rigorous by saying that $u\in H^{\beta,2}(\mathbb{R}^n)$ is a \emph{weak solution} to (\ref{E:Dirichletfortrace}) if
$\mathrm{Tr}_\Gamma u  =\varphi$ $H^{\beta,2} (\mathbb{R}^n)$-q.e. and 
\begin{equation}\label{E:bilinforext}
\big\langle (\alpha-\Delta)^{\beta/2}u,(\alpha-\Delta)^{\beta/2}v\big\rangle_{L^2(\mathbb{R}^n)}=0, \quad v\in \mathring{H}^{\beta,2}(\mathbb{R}^n\setminus \Gamma).
\end{equation}
The next two statements are a version of \cite[Corollary 1]{HINZ-2020}. 

\begin{proposition}\label{P:fracext}
Let  $0< d\leq n$ and $(n-d)/2<\beta$. Suppose that $\mu$ is a nonzero finite Borel measure whose support $\Gamma=\supp\mu$ is a subset of $\mathbb{R}^n$ and which satisfies \eqref{E:upperdreg}. For any $\alpha>0$ and $\varphi \in \mathrm{Tr}_\Gamma(H_2^\beta(\mathbb{R}^n))$ there is a unique weak solution $H_{\Gamma}^\alpha\varphi$  to \eqref{E:Dirichletfortrace}. 
\end{proposition}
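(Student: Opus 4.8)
The plan is to produce $H_\Gamma^\alpha\varphi$ as an orthogonal projection in a suitable Hilbert space realization of $H^{\beta,2}(\mathbb{R}^n)$. First I would note that for $\alpha>0$ the form
\[
\mathcal{E}_\alpha(u,v):=\big\langle(\alpha-\Delta)^{\beta/2}u,(\alpha-\Delta)^{\beta/2}v\big\rangle_{L^2(\mathbb{R}^n)}=\int_{\mathbb{R}^n}(\alpha+|\xi|^2)^{\beta}\,\widehat{u}(\xi)\,\overline{\widehat{v}(\xi)}\,d\xi
\]
is an inner product on $H^{\beta,2}(\mathbb{R}^n)$ whose norm $\mathcal{E}_\alpha(\cdot,\cdot)^{1/2}$ is equivalent to $\|\cdot\|_{H^{\beta,2}(\mathbb{R}^n)}$, since $(\alpha+|\xi|^2)^\beta$ and $(1+|\xi|^2)^\beta$ are comparable uniformly in $\xi\in\mathbb{R}^n$; here the assumption $\alpha>0$ is used, as for $\alpha=0$ one would only obtain a seminorm. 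Thus $(H^{\beta,2}(\mathbb{R}^n),\mathcal{E}_\alpha)$ is a Hilbert space, and $V:=\mathring{H}^{\beta,2}(\mathbb{R}^n\setminus\Gamma)$, being by definition the closure of $C_c^\infty(\mathbb{R}^n\setminus\Gamma)$ in this space, is a closed subspace.

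The crucial step is to identify the admissible set of \eqref{E:Dirichletfortrace}. Fix any $g\in H^{\beta,2}(\mathbb{R}^n)$ with $\operatorname{Tr}_\Gamma g=\varphi$, which exists because $\varphi\in\operatorname{Tr}_\Gamma(H^{\beta,2}(\mathbb{R}^n))$. Using the characterization $V=\{v\in H^{\beta,2}(\mathbb{R}^n):\ \widetilde{v}=0\ \text{$H^{\beta,2}(\mathbb{R}^n)$-q.e. on }\Gamma\}$ from \cite[Corollary 2.3.1]{FOT94}, one checks that $u\in H^{\beta,2}(\mathbb{R}^n)$ satisfies $\widetilde{u}=\varphi$ $H^{\beta,2}(\mathbb{R}^n)$-q.e. on $\Gamma$ if and only if $u-g\in V$; hence the set of $u$ admissible for \eqref{E:Dirichletfortrace} is precisely the nonempty closed affine subspace $g+V$. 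I expect this identification, together with the quasi-continuity bookkeeping it rests on, to be the main (and essentially only) point requiring care.

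I would then set $H_\Gamma^\alpha\varphi:=g-P_Vg$, where $P_V$ is the $\mathcal{E}_\alpha$-orthogonal projection onto $V$; equivalently, $H_\Gamma^\alpha\varphi$ is the unique element of $g+V$ of minimal $\mathcal{E}_\alpha$-norm (metric projection of $0$ onto the closed convex set $g+V$). This choice does not depend on the lift $g$: if $g'$ is another lift then $g'-g\in V$, so $P_Vg'=P_Vg+(g'-g)$ and $g'-P_Vg'=g-P_Vg$. By construction $H_\Gamma^\alpha\varphi\in g+V$, so $\operatorname{Tr}_\Gamma H_\Gamma^\alpha\varphi=\varphi$ q.e. on $\Gamma$, and $H_\Gamma^\alpha\varphi=g-P_Vg$ is $\mathcal{E}_\alpha$-orthogonal to $V$, which is exactly \eqref{E:bilinforext}; hence $H_\Gamma^\alpha\varphi$ is a weak solution.

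For uniqueness, let $u$ be any weak solution. Then $u\in g+V$ by the previous paragraph, and $\mathcal{E}_\alpha(u,v)=0$ for all $v\in V$. Writing $u=g+w$ with $w\in V$ and using $\mathcal{E}_\alpha(g,v)=\mathcal{E}_\alpha(P_Vg,v)$ for $v\in V$, we obtain $\mathcal{E}_\alpha(P_Vg+w,v)=0$ for all $v\in V$; since $P_Vg+w\in V$ and $\mathcal{E}_\alpha$ is positive definite on $V$, taking $v=P_Vg+w$ forces $w=-P_Vg$, i.e. $u=g-P_Vg=H_\Gamma^\alpha\varphi$. This establishes both existence and uniqueness.
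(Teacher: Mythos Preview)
Your proof is correct and follows essentially the same approach as the paper: pick a lift $g$ of $\varphi$, then subtract its $\mathcal{E}_\alpha$-orthogonal projection onto $V=\mathring{H}^{\beta,2}(\mathbb{R}^n\setminus\Gamma)$ to obtain the unique element of the affine set $g+V$ that is $\mathcal{E}_\alpha$-orthogonal to $V$. The paper phrases the projection step as a Riesz representation (finding $\widetilde{u}\in V$ with $\langle\widetilde{u},v\rangle_\alpha=-\langle g,v\rangle_\alpha$ for all $v\in V$, i.e.\ $\widetilde{u}=-P_Vg$), but this is the same construction; your version simply spells out more of the surrounding details (norm equivalence via $\alpha>0$, independence of the lift, and the uniqueness argument).
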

\begin{proof}
Let $\left\langle \cdot,\cdot\right\rangle_\alpha$ denote the bilinear form on the left hand side of (\ref{E:bilinforext}). Let $w\in H^{\beta,2}(\mathbb{R}^n)$ be such that $\mathrm{Tr}_\Gamma w=\varphi$, and let $\widetilde{u}$ be the unique element of $\mathring{H}^{\beta,2}(\mathbb{R}^n\setminus \Gamma)$ such that $\left\langle \widetilde{u},v\right\rangle_\alpha=-\left\langle w,v\right\rangle_\alpha$, $v\in \mathring{H}^{\beta,2}(\mathbb{R}^n\setminus \Gamma)$. Then the function $u:=\widetilde{u}+w$ has the desired properties.
\end{proof}

We use the shortcut notation 
\begin{equation}\label{E:harmext0}
H_{\Gamma}\varphi:=H_{\Gamma}^1\varphi. 
\end{equation}
It is common to refer to $H_{\Gamma}$ as the \emph{$1$-harmonic extension operator}.

\begin{corollary}\label{C:extension}
Let  $0< d\leq n$ and $(n-d)/2<\beta$. Suppose that $\mu$ is a nonzero finite Borel measure whose support $\Gamma=\supp\mu$ is a subset of $\mathbb{R}^n$ and which satisfies \eqref{E:upperdreg}. The map 
\[H_{\Gamma}:\mathrm{Tr}_\Gamma(H^{\beta,2}(\mathbb{R}^n))\to H^{\beta,2}(\mathbb{R}^n), \quad \varphi\mapsto H_{\Gamma}\varphi,\] 
is a linear extension operator of norm one, and $\mathrm{Tr}_\Gamma(H_{\Gamma}\varphi)=\varphi$, $\varphi \in \mathrm{Tr}_\Gamma(H^{\beta,2}(\mathbb{R}^n))$. 

In the case that $0<\beta\leq 1$ any $\varphi\in \mathrm{Tr}_\Gamma(H^{\beta,2}(\mathbb{R}^n))$ has an extension $u\in H^{\beta,2}(\mathbb{R}^n)\cap L^\infty(\mathbb{R}^n)$ with $\left\|u\right\|_{H^{\beta,2}(\mathbb{R}^n)}\leq \left\|\varphi\right\|_{\mathrm{Tr}_\Gamma(H^{\beta,2}(\mathbb{R}^n))}$.
\end{corollary}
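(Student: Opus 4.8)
The plan is to handle the two assertions in turn, using crucially that for $\alpha=1$ the bilinear form in \eqref{E:bilinforext} is exactly the inner product $\langle\cdot,\cdot\rangle_{H^{\beta,2}(\mathbb{R}^n)}$, so that $H_\Gamma=H_\Gamma^1$ becomes an orthogonal projection construction, and that for $0<\beta\le 1$ the square of the $H^{\beta,2}(\mathbb{R}^n)$-norm is a Dirichlet form.

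\emph{First assertion.} Fix $\varphi\in\mathrm{Tr}_\Gamma(H^{\beta,2}(\mathbb{R}^n))$ and any $w\in H^{\beta,2}(\mathbb{R}^n)$ with $\mathrm{Tr}_\Gamma w=\varphi$ $\mu$-a.e. Rereading the proof of Proposition \ref{P:fracext} with $\alpha=1$, the space $\mathring{H}^{\beta,2}(\mathbb{R}^n\setminus\Gamma)$ is a closed subspace of the Hilbert space $H^{\beta,2}(\mathbb{R}^n)$ and $H_\Gamma\varphi=w-Pw$, where $P$ is the orthogonal projection onto $\mathring{H}^{\beta,2}(\mathbb{R}^n\setminus\Gamma)$. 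I would first check independence of the lift $w$: if $w'$ is another lift, then $\mathrm{Tr}_\Gamma(w-w')=0$, and since $\mu$ charges no set of zero $H^{\beta,2}(\mathbb{R}^n)$-capacity (by \eqref{E:upperdreg} and $(n-d)/2<\beta$) one even has $\widetilde{w-w'}=0$ $H^{\beta,2}(\mathbb{R}^n)$-q.e. on $\Gamma$, hence $w-w'\in\mathring{H}^{\beta,2}(\mathbb{R}^n\setminus\Gamma)$ by \cite[Corollary 2.3.1]{FOT94}, so $(I-P)(w-w')=0$. Linearity of $\varphi\mapsto H_\Gamma\varphi$ follows at once. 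Next, $Pw\in\mathring{H}^{\beta,2}(\mathbb{R}^n\setminus\Gamma)$ has quasi-continuous version vanishing q.e., hence $\mu$-a.e., on $\Gamma$, so $\mathrm{Tr}_\Gamma(H_\Gamma\varphi)=\mathrm{Tr}_\Gamma w-\mathrm{Tr}_\Gamma(Pw)=\varphi$; in particular $H_\Gamma\varphi$ is an admissible competitor in \eqref{E:quotientnorm}, giving $\|\varphi\|_{\mathrm{Tr}_\Gamma(H^{\beta,2}(\mathbb{R}^n))}\le\|H_\Gamma\varphi\|_{H^{\beta,2}(\mathbb{R}^n)}$. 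Conversely $\|H_\Gamma\varphi\|_{H^{\beta,2}(\mathbb{R}^n)}=\|(I-P)w\|_{H^{\beta,2}(\mathbb{R}^n)}\le\|w\|_{H^{\beta,2}(\mathbb{R}^n)}$ for every lift $w$, and the infimum over $w$ gives the reverse inequality. Thus $H_\Gamma$ is an isometry realizing the infimum in \eqref{E:quotientnorm}, and since $\mu\neq 0$ forces $\mathrm{Tr}_\Gamma(H^{\beta,2}(\mathbb{R}^n))\neq\{0\}$ (Corollary \ref{C:tracesimple}), its operator norm equals $1$.

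\emph{Second assertion.} The key input is that for $0<\beta\le 1$ the form $\langle(1-\Delta)^{\beta/2}\cdot,(1-\Delta)^{\beta/2}\cdot\rangle_{L^2(\mathbb{R}^n)}$ with domain $H^{\beta,2}(\mathbb{R}^n)$ is a Dirichlet form on $L^2(\mathbb{R}^n)$: its generator $-(1-\Delta)^\beta$ generates the sub-Markovian semigroup obtained from the killed heat semigroup $(e^{-t}e^{t\Delta})_{t\ge 0}$ by $\beta$-stable subordination (for $\beta>1$ this fails). Hence every normal contraction, in particular the truncation $T_M(s):=(-M)\vee(s\wedge M)$, does not increase $\|\cdot\|_{H^{\beta,2}(\mathbb{R}^n)}$. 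Taking $\varphi$ bounded $\mu$-a.e. (the case used in the applications), I would set $M:=\|\varphi\|_{L^\infty(\Gamma,\mu)}$ and $u:=T_M(H_\Gamma\varphi)$. Then $u\in H^{\beta,2}(\mathbb{R}^n)\cap L^\infty(\mathbb{R}^n)$ and, by the first assertion, $\|u\|_{H^{\beta,2}(\mathbb{R}^n)}\le\|H_\Gamma\varphi\|_{H^{\beta,2}(\mathbb{R}^n)}=\|\varphi\|_{\mathrm{Tr}_\Gamma(H^{\beta,2}(\mathbb{R}^n))}$; and since the Lebesgue-point redefinition \eqref{E:redefinition} commutes with postcomposition by the $1$-Lipschitz map $T_M$, one gets $\mathrm{Tr}_\Gamma u=\widetilde{T_M(H_\Gamma\varphi)}=T_M(\widetilde{H_\Gamma\varphi})=T_M\varphi=\varphi$ $\mu$-a.e., so $u$ is the desired bounded extension.

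\emph{Main obstacle.} The first assertion is essentially Hilbert-space bookkeeping, with the only genuine point being the identification of $\mathring{H}^{\beta,2}(\mathbb{R}^n\setminus\Gamma)$ with the functions whose quasi-continuous version vanishes q.e. on $\Gamma$. The real work is in the second assertion: one must know that $\|\cdot\|_{H^{\beta,2}(\mathbb{R}^n)}^2$ is a Dirichlet form precisely for $0<\beta\le 1$, so that truncations contract it, and one must verify the compatibility $\widetilde{T_M\circ g}=T_M\circ\widetilde g$ between pointwise redefinition and Lipschitz postcomposition, which is what lets a truncation of the extension truncate its trace. Both facts are classical, but the subordination/sub-Markovianity step is where I would be most careful.
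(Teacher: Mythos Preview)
Your approach matches the paper's: the first assertion via orthogonal projection onto the complement of $\mathring{H}^{\beta,2}(\mathbb{R}^n\setminus\Gamma)$, the second via the Dirichlet form property for $0<\beta\le1$ combined with truncation. The paper's own proof is a two-sentence sketch (``all but the last statement follow from the preceding; the last follows from the fact that $(\|\cdot\|_{H^{\beta,2}(\mathbb{R}^n)}^2,H^{\beta,2}(\mathbb{R}^n))$ is a Dirichlet form''), so you have supplied substantially more detail along the same lines. You also correctly observe that the second assertion really wants $\varphi\in L^\infty(\Gamma,\mu)$, a hypothesis the paper omits here but reinstates in Corollary~\ref{C:extensiontodomain}.

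One logical slip in your well-definedness check: from $\widetilde{w-w'}=0$ $\mu$-a.e.\ on $\Gamma$ you conclude $\widetilde{w-w'}=0$ $H^{\beta,2}(\mathbb{R}^n)$-q.e.\ on $\Gamma$, invoking that $\mu$ charges no polar sets. That implication runs the wrong direction: ``$\mu$ charges no polar sets'' yields polar $\Rightarrow$ $\mu$-null, not the converse. In general $\ker\mathrm{Tr}_\Gamma$ (with the $\mu$-a.e.\ equality used in \eqref{E:quotientnorm}) can strictly contain $\mathring{H}^{\beta,2}(\mathbb{R}^n\setminus\Gamma)$; the two coincide precisely when $\Gamma$ agrees, up to polar sets, with the quasi-support $\Gamma_q$ of $\mu$, which is the content behind Proposition~\ref{P:tracespacedoesnotdepend}. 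The paper's terse proof does not confront this point either, and your overall conclusion and method are correct, but that particular step should be reworded (e.g.\ by projecting onto the closed subspace $\ker\mathrm{Tr}_\Gamma$ directly, or by invoking Proposition~\ref{P:tracespacedoesnotdepend} to pass to q.e.\ equality on $\Gamma_q$).
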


\begin{proof}
All but the last statement follow from the preceding. The last statement follows from the fact that for $0<\beta\leq 1$ the pair $(\left\|\cdot\right\|_{H^{\beta,2}(\mathbb{R}^n)}^2, H^{\beta,2}(\mathbb{R}^n))$ is a Dirichlet form, see \cite[Section I.3]{BH91} or \cite[Section 1.1]{FOT94}.
\end{proof}

\begin{remark}
The same strategy, based on a nonlinear version of (\ref{E:Dirichletfortrace}), yields extension operators also for the case $p\neq 2$, but they may not be linear. See \cite{JONSSON-2009, TRIEBEL-1997}.
\end{remark}

\section{Poincar\'e inequalities with uniform constants}\label{S:Poincare}

We turn to Poincar\'e inequalities. Let $\Omega\subset \mathbb{R}^n$ be a domain and let $1<p<+\infty$. The Sobolev space $W^{1,p}(\Omega)$ is defined as the space of all $f\in L^p(\Omega)$ for which all first order partials $\partial_j f$, defined in distributional sense, also belong to  $L^p(\Omega)$. Endowed with the norm 
\[\left\|f\right\|_{W^{1,p}(\Omega)}:=\Big(\left\|f\right\|_{L^p(\Omega)}^p+\sum_{j=1}^n\left\|\partial_j f\right\|_{L^p(\Omega)}^p\Big)^{1/p}\]
it is Banach, and for $p=2$ Hilbert. It is well-known that $W^{1,p}(\mathbb{R}^n)=H^{1,p}(\mathbb{R}^n)$ in the sense of equivalently normed vector spaces. Note that trivially, $\Omega=\mathbb{R}^n$ itself is an extension domain, and we may choose $E_{\mathbb{R}^n}$ to be the identity.

Obviously the restriction $\mathrm{Tr}_\Omega f=f|_\Omega$ of an element $f\in W^{1,p}(\mathbb{R}^n)$ to a domain $\Omega$ is an element of $W^{1,p}(\Omega)$, the operator $\mathrm{Tr}_\Omega$ is a linear contraction, and it is compact if $\Omega$ is bounded. The domain $\Omega$ is said to be a \emph{$W^{1,p}$-extension domain} if there is a bounded linear extension operator $E_\Omega: W^{1,p}(\Omega)\to W^{1,p}(\mathbb{R}^n)$. In this case we have $(Ef)|_\Omega =f$ for all $f\in W^{1,p}(\Omega)$.  Note that trivially, $\Omega=\mathbb{R}^n$ itself is an extension domain, and we may choose $E_{\mathbb{R}^n}$ to be the identity.
 
Now suppose that $\Omega$ is a $W^{1,p}$-extension domain and that $\mu$ is a nonzero finite Borel measure whose support $\supp\mu=\Gamma$ is a subset of $\overline{\Omega}$ and which satisfies \eqref{E:upperdreg} with some $0\vee (n-p)<d\leq n$. Similarly as in the last section \cite[Theorem 6.1]{Biegert2009} guarantees that 
\[\mathrm{Tr}_{\Omega, \Gamma}:=\mathrm{Tr}_{\Gamma}\circ E_{\Omega}:\ W^{1,p}(\Omega)\to L^p(\Gamma,\mu) \] 
is well defined in the sense that if $u,v\in W^{1,p}(\mathbb{R}^n)$ are such that $u=v$ $\lambda^n$-a.e. in $\Omega$, then $\mathrm{Tr}_{\Gamma}u=\mathrm{Tr}_{\Gamma}v$ $\mu$-a.e. on $\Gamma$. See also \cite[Theorem 1]{WALLIN-1991}. We have $\mathrm{Tr}_{\Omega, \Gamma}(W^{1,p}(\Omega))=\mathrm{Tr}_{\Gamma}(W^{1,p}(\mathbb{R}^n))$. Corollary \ref{C:tracesimple} implies that 
$\mathrm{Tr}_{\Omega, \Gamma}$ is a bounded linear operator, and it is compact if $\Gamma$ is. Seen as a linear operator 
$\mathrm{Tr}_{\Omega, \Gamma}:W^{1,p}(\Omega)\to \mathrm{Tr}_{\Gamma}(W^{1,p}(\mathbb{R}^n))$ it is a contraction. In the case that $\Omega=\mathbb{R}^n$ and $E_{\mathbb{R}^n}$ is the identity, we have $\mathrm{Tr}_{\mathbb{R}^n, \Gamma}=\mathrm{Tr}_{\Gamma}$.

Recall notation (\ref{E:harmext0}). We briefly record the following immediate consequence of Corollary \ref{C:extension}.

\begin{corollary}\label{C:extensiontodomain}
Let $\Omega$ be a $W^{1,2}$-extension domain and suppose that $\mu$ is a nonzero finite Borel measure satisfying (\ref{E:upperdreg}) with some $0\vee (n-2)<d\leq n$ and whose support $\supp\mu=\Gamma$ is a subset of $\overline{\Omega}$. The map 
\begin{equation}\label{E:harmext}
H_{\Gamma,\Omega}:=\mathrm{Tr}_\Omega\circ H_{\Gamma}:\: \mathrm{Tr}_{\Gamma}(W^{1,2}(\mathbb{R}^n))\to W^{1,2}(\Omega).
\end{equation}
is a linear extension operator of norm one, and $\mathrm{Tr}_{\Omega,\Gamma}(H_{\Gamma,\Omega}\varphi)=\varphi$, $\varphi \in \mathrm{Tr}_\Gamma(W^{1,2}(\mathbb{R}^n))$. 

Moreover, any $\varphi\in \mathrm{Tr}_\Gamma(W^{1,2}(\mathbb{R}^n))\cap L^\infty(\Gamma,\mu)$ has an extension $u\in W^{1,2}(\Omega)\cap L^\infty(\Omega)$ with $\left\|u\right\|_{W^{1,2}(\Omega)}\leq \left\|\varphi\right\|_{\mathrm{Tr}_\Gamma(W^{1,2}(\mathbb{R}^n))}$.
\end{corollary}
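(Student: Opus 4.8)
The plan is to derive Corollary \ref{C:extensiontodomain} directly from Corollary \ref{C:extension} by composing with the restriction map $\mathrm{Tr}_\Omega$, and to check that nothing is lost in this composition. First I would observe that the hypotheses here -- $\mu$ nonzero finite Borel, satisfying \eqref{E:upperdreg} with $0\vee(n-2)<d\le n$, and $\Gamma=\supp\mu\subset\overline{\Omega}$ -- are exactly those of Corollary \ref{C:extension} in the case $p=2$, $\beta=1$: indeed $(n-d)/2<\beta=1$ is equivalent to $n-2<d$, which is implied by $0\vee(n-2)<d$. Thus $H_\Gamma:\mathrm{Tr}_\Gamma(W^{1,2}(\mathbb{R}^n))\to W^{1,2}(\mathbb{R}^n)$ is a well-defined linear extension operator of norm one with $\mathrm{Tr}_\Gamma(H_\Gamma\varphi)=\varphi$, using $W^{1,2}(\mathbb{R}^n)=H^{1,2}(\mathbb{R}^n)$ as equivalently normed spaces.

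Next I would invoke the fact, recorded just above in the excerpt, that for a $W^{1,p}$-extension domain $\Omega$ the restriction $\mathrm{Tr}_\Omega: W^{1,2}(\mathbb{R}^n)\to W^{1,2}(\Omega)$ is a linear contraction, and that $\mathrm{Tr}_{\Omega,\Gamma}=\mathrm{Tr}_\Gamma\circ E_\Omega$ is well-defined, meaning it depends only on the $\lambda^n$-a.e. equivalence class of a function on $\Omega$. Then $H_{\Gamma,\Omega}:=\mathrm{Tr}_\Omega\circ H_\Gamma$ is linear as a composition of linear maps, and has operator norm at most one since both factors are contractions (for the target norm on $W^{1,2}(\Omega)$). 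To see it is a genuine extension operator, i.e. a right inverse of the trace, I would compute $\mathrm{Tr}_{\Omega,\Gamma}(H_{\Gamma,\Omega}\varphi)=\mathrm{Tr}_\Gamma(E_\Omega(\mathrm{Tr}_\Omega(H_\Gamma\varphi)))$; since $H_\Gamma\varphi\in W^{1,2}(\mathbb{R}^n)$ and $E_\Omega(\mathrm{Tr}_\Omega g)=E_\Omega(g|_\Omega)$ agrees with $g$ $\lambda^n$-a.e. on $\Omega$ for $g=H_\Gamma\varphi$, the well-definedness of $\mathrm{Tr}_{\Omega,\Gamma}$ gives $\mathrm{Tr}_\Gamma(E_\Omega(\mathrm{Tr}_\Omega(H_\Gamma\varphi)))=\mathrm{Tr}_\Gamma(H_\Gamma\varphi)=\varphi$. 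Norm one (not just $\le 1$) follows because, e.g., constants are in the trace space and are mapped to themselves with equality of norms, or simply because a norm-one right inverse cannot have norm $<1$.

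For the final boundedness assertion, suppose $\varphi\in\mathrm{Tr}_\Gamma(W^{1,2}(\mathbb{R}^n))\cap L^\infty(\Gamma,\mu)$. By the last statement of Corollary \ref{C:extension} (applied with $\beta=1$, which is allowed since $0<\beta\le 1$), there is an extension $u\in H^{1,2}(\mathbb{R}^n)\cap L^\infty(\mathbb{R}^n)=W^{1,2}(\mathbb{R}^n)\cap L^\infty(\mathbb{R}^n)$ with $\|u\|_{H^{1,2}(\mathbb{R}^n)}\le\|\varphi\|_{\mathrm{Tr}_\Gamma(H^{1,2}(\mathbb{R}^n))}$; here $u=H_\Gamma\varphi$ by uniqueness of the weak solution. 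I would then take $u|_\Omega=\mathrm{Tr}_\Omega u\in W^{1,2}(\Omega)$, which lies in $L^\infty(\Omega)$ since $u\in L^\infty(\mathbb{R}^n)$, and which satisfies $\mathrm{Tr}_{\Omega,\Gamma}(u|_\Omega)=\varphi$ by the computation above. The norm bound $\|u|_\Omega\|_{W^{1,2}(\Omega)}\le\|u\|_{W^{1,2}(\mathbb{R}^n)}\le\|\varphi\|_{\mathrm{Tr}_\Gamma(W^{1,2}(\mathbb{R}^n))}$ follows from contractivity of $\mathrm{Tr}_\Omega$, after identifying $H^{1,2}$ and $W^{1,2}$ norms; since these are only equivalent, not equal, in general, the one point requiring care is that the norm on $W^{1,2}(\Omega)$ used in the statement is the quotient/extension norm inherited consistently with the $H^{1,2}(\mathbb{R}^n)$ normalization, so that the inequality is sharp. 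This bookkeeping about which equivalent norm is in force is really the only subtlety; the rest is a formal composition argument with no genuine obstacle.
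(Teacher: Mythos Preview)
Your approach is exactly the one the paper intends: the corollary is stated there as an immediate consequence of Corollary~\ref{C:extension}, with no separate proof, and your composition argument with $\mathrm{Tr}_\Omega$ fills in precisely what is left implicit.

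One small correction: the parenthetical ``here $u=H_\Gamma\varphi$ by uniqueness of the weak solution'' is not right. The $L^\infty$ extension in Corollary~\ref{C:extension} is not the $1$-harmonic extension $H_\Gamma\varphi$; it is obtained from the Dirichlet form (Markovian) property of $(\|\cdot\|_{H^{1,2}(\mathbb{R}^n)}^2, H^{1,2}(\mathbb{R}^n))$, i.e.\ by truncating an extension at the level $\|\varphi\|_{L^\infty(\Gamma,\mu)}$. There is no reason $H_\Gamma\varphi$ itself should be bounded. This does not affect your argument, since you only need \emph{some} $u\in W^{1,2}(\mathbb{R}^n)\cap L^\infty(\mathbb{R}^n)$ with $\mathrm{Tr}_\Gamma u=\varphi$ and the norm bound, and the well-definedness of $\mathrm{Tr}_{\Omega,\Gamma}$ then gives $\mathrm{Tr}_{\Omega,\Gamma}(u|_\Omega)=\varphi$ exactly as you compute for $H_\Gamma\varphi$. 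Your remark about the $W^{1,2}$ versus $H^{1,2}$ normalizations is apt; the paper is tacitly working with a consistent choice so that ``norm one'' holds on the nose.
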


Throughout the remaining section we assume $n\geq 2$. We recall the following classical definition of uniform domains, \cite{JONES-1981, WALLIN-1991}.  

\begin{definition}\label{DefEDD}
	Let $\varepsilon > 0$. A bounded domain $\Omega\subset \mathbb{R}^n$ is called an \emph{$(\varepsilon,\infty)$-(uniform) domain} if for all $x, y \in \Omega$ there is a rectifiable curve $\gamma\subset \Omega$ with length $\ell(\gamma)$ joining $x$ to $y$ and satisfying
	\begin{enumerate}
		\item[(i)] $\ell(\gamma)\le \frac{|x-y|}{\varepsilon}$ and
		\item[(ii)] $d(z,\partial \Omega)\ge \varepsilon |x-z|\frac{|y-z|}{|x-y|}$ for $z\in \gamma$.
	\end{enumerate}
\end{definition}

By (a special case of) \cite[Theorem 1]{JONES-1981} any bounded $(\varepsilon,\infty)$-domain $\Omega\subset \mathbb{R}^n$ is a $W^{1,p}$-extension domain, and there is an extension operator $E_\Omega$ whose operator norm depends only on $n$, $p$ and $\varepsilon$, see also \cite{AHMNT,Rogers}. Theorem \ref{T:uniPoincare} and Corollary \ref{C:PoincareXOmega} below will also make use of a homogeneous version of this statement, \cite[Theorem 2]{JONES-1981}. In both cases the parameter $\varepsilon$ provides a quantitative control on the norm of extension operators.

For $(\varepsilon,\infty)$-domains $\Omega$ inside a bounded confinement $D$ and Borel measures $\mu$ supported inside $\overline{\Omega}$ and satisfying (\ref{E:upperdreg}) with the same constant we can obtain Poincar\'e inequalities with constants that are 'uniform', that is, independent of the particular shape of the domain $\Omega$ or the set $\Gamma$. 
A related result for the the case that $\Gamma\subset \partial\Omega$ and the functions in question have vanishing trace on $\Gamma$ had been shown in \cite[Theorem 10]{DRPT-2020}.

\begin{theorem}\label{T:uniPoincare}
Let $D\subset \mathbb{R}^n$ be a bounded Lipschitz domain, $1<p<+\infty$, $0\vee (n-p)<d\leq n$ and $\varepsilon>0$. Suppose that $\Omega\subset D$ is a $(\varepsilon,\infty)$-domain and that $\mu$ is a nonzero Borel measure on $\mathbb{R}^n$ with $\Gamma=\supp\mu\subset \overline{\Omega}$ and such that \eqref{E:upperdreg} holds. Then we have
\[\big\|u-\frac{1}{\mu(\Gamma)}\int_\Gamma \mathrm{Tr}_{\Omega,\Gamma} u\:d\mu\big\|_{L^p(\Omega)}\leq C(n,D,p,\varepsilon, d, c_d)\left\|\nabla u\right\|_{L^p(\Omega,\mathbb{R}^n)},\quad u\in W^{1,p}(\Omega),\]
with a constant $C(n,D,p,\varepsilon, d, c_d)$ depending only on $n,D,p,\varepsilon, d$ and $c_d$.
\end{theorem}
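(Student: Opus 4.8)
The plan is to prove this by a compactness–contradiction argument, exploiting the uniform control that the parameter $\varepsilon$ gives on the Jones extension operators. Suppose the conclusion fails. Then there are $(\varepsilon,\infty)$-domains $\Omega_m\subset D$, measures $\mu_m$ with $\Gamma_m=\supp\mu_m\subset\overline{\Omega}_m$ satisfying \eqref{E:upperdreg} with the \emph{same} constant $c_d$, and functions $u_m\in W^{1,p}(\Omega_m)$ with
\[
\big\|u_m-\tfrac{1}{\mu_m(\Gamma_m)}\textstyle\int_{\Gamma_m}\mathrm{Tr}_{\Omega_m,\Gamma_m}u_m\:d\mu_m\big\|_{L^p(\Omega_m)}>m\,\|\nabla u_m\|_{L^p(\Omega_m,\mathbb{R}^n)}.
\]
Replacing $u_m$ by $u_m-c_m$ (with $c_m$ the displayed average, which only shifts the average to $0$ and does not change the gradient) and then normalizing in $L^p(\Omega_m)$, we may assume $\int_{\Gamma_m}\mathrm{Tr}_{\Omega_m,\Gamma_m}u_m\:d\mu_m=0$, $\|u_m\|_{L^p(\Omega_m)}=1$, and $\|\nabla u_m\|_{L^p(\Omega_m,\mathbb{R}^n)}<1/m\to 0$.

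Next I would transfer everything to the fixed confinement $D$. By \cite[Theorem 1]{JONES-1981} each $\Omega_m$ admits an extension operator $E_{\Omega_m}:W^{1,p}(\Omega_m)\to W^{1,p}(\mathbb{R}^n)$ whose norm depends only on $n$, $p$, $\varepsilon$; set $w_m:=(E_{\Omega_m}u_m)|_D\in W^{1,p}(D)$. Then $(w_m)_m$ is bounded in $W^{1,p}(D)$: indeed $\|u_m\|_{W^{1,p}(\Omega_m)}^p=\|u_m\|_{L^p(\Omega_m)}^p+\|\nabla u_m\|_{L^p}^p\le 1+1$, so $\|w_m\|_{W^{1,p}(D)}\le \|E_{\Omega_m}u_m\|_{W^{1,p}(\mathbb{R}^n)}\le C(n,p,\varepsilon)$. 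Since $D$ is a bounded Lipschitz domain, $W^{1,p}(D)$ embeds compactly into $L^p(D)$ and $H^{\beta',p}(D)$ for suitable $(n-d)/p<\beta'\leq 1<\beta:=1$ (using $W^{1,p}=H^{1,p}$); passing to a subsequence, $w_m\to w$ strongly in $L^p(D)$ and weakly in $W^{1,p}(D)$, and after a further subsequence $w_m\to w$ strongly in $H^{\beta',p}(D)$. Lower semicontinuity of $\|\nabla\cdot\|_{L^p(D)}$ under weak $W^{1,p}(D)$-convergence together with $\|\nabla w_m\|_{L^p(D)}=\|\nabla u_m\|_{L^p(\Omega_m)}<1/m\to 0$ forces $\nabla w=0$ on $D$, so $w$ is constant on the connected set $D$, say $w\equiv a$.

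I now identify $a$ by passing to the limit in the constraint $\int_{\Gamma_m}\mathrm{Tr}_{\Omega_m,\Gamma_m}u_m\:d\mu_m=0$. The measures $\mu_m$ are supported in $\overline{D}$ and have $\sup_m\mu_m(\mathbb{R}^n)<+\infty$ (this follows from \eqref{E:upperdreg} and compactness of $\overline{D}$, as noted in the proof of Lemma \ref{L:traceconvergence}), so after passing to a subsequence $\mu_m\rightharpoonup\mu$ weakly for some finite Borel measure $\mu$ with $\Gamma:=\supp\mu\subset\overline{D}$; moreover $\mu\ne 0$, since weak convergence preserves total mass on the compact set $\overline{D}$ and $\mu_m(\mathbb{R}^n)=\mu_m(\Gamma_m)\geq c$ for a uniform lower bound $c>0$ — this uses that each $\mu_m$ is nonzero, \eqref{E:upperdreg}, and that $\Gamma_m\subset\overline{D}$; more precisely, pick $x_m\in\Gamma_m$, then $\mu_m(\Gamma_m)\geq\mu_m(B(x_m,1))$, but this lower bound is not immediate from the upper regularity alone, so one must instead argue that the weak limit is nonzero because the displayed inequality and the normalization $\|u_m\|_{L^p(\Omega_m)}=1$ would otherwise be vacuous — I will return to this point below. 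Since $\mathrm{Tr}_{\Omega_m,\Gamma_m}u_m=\mathrm{Tr}_{D,\Gamma_m}w_m$ ($\mu_m$-a.e. on $\Gamma_m$, by the consistency of trace procedures and $w_m=u_m$ on $\Omega_m\supset$ a neighborhood of $\Gamma_m$ in the relevant sense) and $w_m\to w=a$ weakly in $H^{\beta',p}(D)$ hence, after relabeling, we may apply Lemma \ref{L:traceconvergence} with exponent $\beta'$ to get
\[
0=\lim_{m\to\infty}\int_{\Gamma_m}\mathrm{Tr}_{D,\Gamma_m}w_m\:d\mu_m=\int_{\Gamma}\mathrm{Tr}_{D,\Gamma}w\:d\mu=a\,\mu(\Gamma).
\]
Provided $\mu(\Gamma)>0$, this gives $a=0$, hence $w\equiv 0$ on $D$; but then $\|u_m\|_{L^p(\Omega_m)}=\|w_m\|_{L^p(\Omega_m)}\to\|w\|_{L^p}=0$ (using $w_m\to w$ strongly in $L^p(D)$ and $\Omega_m\subset D$), contradicting $\|u_m\|_{L^p(\Omega_m)}=1$.

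\textbf{The main obstacle} is exactly the step $\mu(\Gamma)>0$, i.e. ruling out that all the mass of the $\mu_m$ escapes to the boundary $\partial D$ or, worse, that $\mu(\Gamma)=0$ is not even the issue but rather that $\inf_m\mu_m(\Gamma_m)=0$ so the normalization $\tfrac{1}{\mu_m(\Gamma_m)}\int_{\Gamma_m}\cdots$ degenerates. To handle this one observes that the hypothesis "$\mu$ nonzero satisfying \eqref{E:upperdreg} with constant $c_d$" does not by itself give a uniform \emph{lower} bound on $\mu(\Gamma)$, so the contradiction argument must be run slightly differently: rather than normalizing $\|u_m\|_{L^p}=1$, normalize so that $v_m:=u_m-\tfrac{1}{\mu_m(\Gamma_m)}\int_{\Gamma_m}\mathrm{Tr}_{\Omega_m,\Gamma_m}u_m\,d\mu_m$ has $\|v_m\|_{L^p(\Omega_m)}=1$ with $\|\nabla v_m\|_{L^p}\to 0$ and $\int_{\Gamma_m}\mathrm{Tr}_{\Omega_m,\Gamma_m}v_m\,d\mu_m=0$; the argument above then applies verbatim to $v_m$ in place of $u_m$, since the constraint $\int_{\Gamma_m}\mathrm{Tr}\,v_m\,d\mu_m=0$ is all that Lemma \ref{L:traceconvergence} needs, and the conclusion $a\,\mu(\Gamma)=0$ combined with $a=$ const forces $a=0$ \emph{whenever} $\mu\ne 0$ — and the weak limit $\mu$ of a sequence of nonzero measures of uniformly bounded mass on a fixed compact set can still be zero, but in that degenerate case one argues directly: if $\mu_m(\Gamma_m)\to 0$ along a subsequence, the trace term tends to $0$ in $L^p(\Gamma_m,\mu_m)$ by Corollary \ref{C:tracesimple} combined with boundedness of $\|v_m\|_{W^{1,p}(\Omega_m)}$, which is not quite enough, so the cleanest route is to note that we are free to assume $\mu_m(\Gamma_m)$ is bounded away from $0$ \emph{after rescaling $\mu_m$ by its total mass}, which does not affect \eqref{E:upperdreg}'s \emph{upper} bound only if the mass is $\geq 1$; when the mass is small one replaces $c_d$ by $c_d/\mu_m(\Gamma_m)$ — losing uniformity. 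The honest resolution, which I expect the authors use, is that the dependence of the Poincaré constant on $c_d$ is harmless and one may from the outset restrict to $\mu$ with, say, $\mu(\Gamma)\geq 1$ by scaling, reducing the general case to this normalized one; the quantity in the Poincaré inequality, $u-\tfrac{1}{\mu(\Gamma)}\int_\Gamma\mathrm{Tr}\,u\,d\mu$, is scale-invariant in $\mu$, so \textbf{no generality is lost}, and then the weak limit $\mu$ automatically satisfies $\mu(\Gamma)\geq 1>0$ by weak convergence on $\overline{D}$, closing the argument.
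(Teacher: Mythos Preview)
Your strategy---contradiction, Jones extension to the fixed confinement $D$, compactness, then Lemma~\ref{L:traceconvergence} to pass to the limit in the mean-zero constraint---is the same circle of ideas as the paper's, but the paper organizes it differently and your version contains one concrete error.

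The paper does \emph{not} run the contradiction with simultaneously varying domains. It first introduces the cone $X(U)$ of those $v\in W^{1,p}(U)$ for which there exists a Borel \emph{probability} measure $\mu$ supported in $\overline U$, satisfying \eqref{E:upperdreg}, with $\int_\Gamma\mathrm{Tr}_{U,\Gamma}v\,d\mu=0$. It then proves, by the standard contradiction argument on the \emph{fixed} domain $D$, that $\|v\|_{L^p(D)}\le c(n,D,p,d,c_d)\|\nabla v\|_{L^p(D)}$ for all $v\in X(D)$ (Corollary~\ref{C:PoincareXD}); the only nontrivial ingredient is that $X(D)$ is weakly closed (Lemma~\ref{L:weaklyclosed}), which follows from Banach--Alaoglu on the probability measures plus Lemma~\ref{L:traceconvergence}. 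Only afterwards does it pass to an arbitrary $(\varepsilon,\infty)$-domain $\Omega\subset D$: for $v\in X(\Omega)$ one has $E_\Omega v\in X(D)$, and the \emph{homogeneous} Jones extension bound \cite[Theorem~2]{JONES-1981}, namely $\|\nabla E_\Omega v\|_{L^p(D)}\le c(n,p,\varepsilon)\|\nabla v\|_{L^p(\Omega)}$, feeds directly into the Poincar\'e inequality on $X(D)$ to give Corollary~\ref{C:PoincareXOmega}. The theorem follows since $u-\mu(\Gamma)^{-1}\int_\Gamma\mathrm{Tr}_{\Omega,\Gamma}u\,d\mu\in X(\Omega)$.

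Your direct route breaks at the line ``$\|\nabla w_m\|_{L^p(D)}=\|\nabla u_m\|_{L^p(\Omega_m)}$''. This equality is false: the Jones operator from \cite[Theorem~1]{JONES-1981} modifies the function on $D\setminus\Omega_m$ and only controls the full $W^{1,p}$-norm, not the gradient alone. With this extension you therefore cannot conclude $\nabla w=0$ on $D$; you only know $\nabla w_m\to 0$ on the \emph{moving} sets $\Omega_m$, which is not enough. The missing ingredient is exactly \cite[Theorem~2]{JONES-1981}, which bounds $\|\nabla E_{\Omega_m}u_m\|_{L^p(\mathbb{R}^n)}$ by $c(n,p,\varepsilon)\|\nabla u_m\|_{L^p(\Omega_m)}\to 0$. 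The paper's two-step layout is arranged precisely so that this homogeneous estimate can be plugged into the already-established inequality on $X(D)$, avoiding any separate argument for $L^p(D)$-boundedness of the homogeneous extension.

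On the mass issue you wrestle with: the paper's resolution is exactly your scale-invariance observation, implemented cleanly from the start. Since $u-\mu(\Gamma)^{-1}\int_\Gamma\mathrm{Tr}\,u\,d\mu$ is unchanged under $\mu\mapsto\lambda\mu$, the cone $X(U)$ is defined using probability measures; then along a contradicting sequence on the compact set $\overline D$ the weak limit of the $\mu_k$ is automatically a probability measure, so the nondegeneracy $\mu(\Gamma)>0$ is built in and your degenerate-mass discussion disappears.
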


\begin{remark}
The constant $C(n,D,p,\varepsilon, d, c_d)$ does not depend on the particular choice of the measure $\mu$.
\end{remark}

Theorem \ref{T:uniPoincare} yields an equivalent norm on $W^{1,p}(\Omega)$ with uniform constants in the norm comparison. 

\begin{corollary}\label{C:equivnorms}
Let the hypotheses of Theorem \ref{T:uniPoincare} be in force. Then we have 
\[c^{-1}\left\|u\right\|_{W^{1,p}(\Omega)}\leq \left\|u\right\|_{W^{1,p}(\Omega),\mu}\leq c\:\left\|u\right\|_{W^{1,p}(\Omega)}, \quad u\in W^{1,p}(\Omega),\]
where 
\[\left\|u\right\|_{W^{1,p}(\Omega),\mu}:=\left(\left\|\nabla u\right\|_{L^p(\Omega,\mathbb{R}^n)}^p+\left\|\mathrm{Tr}_{\Omega,\Gamma} u\right\|_{L^p(\Gamma,\mu)}^p\right)^{1/p}\]
and $c>0$ is a constant depending only on $n,D,p,\varepsilon, d, c_d$ and $\mu(\Gamma)$, the dependency on $\mu(\Gamma)$ being continuous.
\end{corollary}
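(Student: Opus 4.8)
The plan is to prove the two inequalities separately. The right-hand bound $\|u\|_{W^{1,p}(\Omega),\mu}\le c\,\|u\|_{W^{1,p}(\Omega)}$ is essentially immediate from the boundedness of the trace operator, while the left-hand bound, equivalently $\|u\|_{W^{1,p}(\Omega)}\le c\,\|u\|_{W^{1,p}(\Omega),\mu}$, is the substantive one and will be obtained by combining Theorem~\ref{T:uniPoincare} with an elementary triangle inequality in $L^p(\Omega)$.

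First I would dispose of $\|u\|_{W^{1,p}(\Omega),\mu}\le c\,\|u\|_{W^{1,p}(\Omega)}$. One has $\|\nabla u\|_{L^p(\Omega,\mathbb{R}^n)}\le\|u\|_{W^{1,p}(\Omega)}$ trivially. Since $\Gamma\subset\overline{\Omega}\subset\overline{D}$ is compact and $\mu$ satisfies \eqref{E:upperdreg}, the measure $\mu$ is finite, and because $(n-d)/p<1$ by hypothesis, Corollary~\ref{C:tracesimple} applies with $\beta=1$; writing $\mathrm{Tr}_{\Omega,\Gamma}=\mathrm{Tr}_\Gamma\circ E_\Omega$ with a Jones extension operator $E_\Omega$ whose norm depends only on $n,p,\varepsilon$ by \cite[Theorem~1]{JONES-1981}, and $\mathrm{Tr}_\Gamma$ bounded on $H^{1,p}(\mathbb{R}^n)=W^{1,p}(\mathbb{R}^n)$ with norm depending on $n,p,d,c_d,\mu(\Gamma)$, we get $\|\mathrm{Tr}_{\Omega,\Gamma}u\|_{L^p(\Gamma,\mu)}\le c_{\mathrm{Tr}}\|u\|_{W^{1,p}(\Omega)}$ with $c_{\mathrm{Tr}}$ of the claimed dependence, continuous in $\mu(\Gamma)$. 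Hence $\|u\|_{W^{1,p}(\Omega),\mu}^p\le(1+c_{\mathrm{Tr}}^p)\|u\|_{W^{1,p}(\Omega)}^p$.

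For the reverse bound I would set $\bar u:=\frac{1}{\mu(\Gamma)}\int_\Gamma\mathrm{Tr}_{\Omega,\Gamma}u\,d\mu$. Hölder's inequality with respect to $\mu$ gives $|\bar u|\le\mu(\Gamma)^{-1/p}\|\mathrm{Tr}_{\Omega,\Gamma}u\|_{L^p(\Gamma,\mu)}$, so that $\|\bar u\|_{L^p(\Omega)}=|\bar u|\,\lambda^n(\Omega)^{1/p}\le\lambda^n(D)^{1/p}\mu(\Gamma)^{-1/p}\|\mathrm{Tr}_{\Omega,\Gamma}u\|_{L^p(\Gamma,\mu)}$ because $\Omega\subset D$. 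Applying Theorem~\ref{T:uniPoincare} and the triangle inequality in $L^p(\Omega)$,
\[\|u\|_{L^p(\Omega)}\le\|u-\bar u\|_{L^p(\Omega)}+\|\bar u\|_{L^p(\Omega)}\le C(n,D,p,\varepsilon,d,c_d)\|\nabla u\|_{L^p(\Omega,\mathbb{R}^n)}+\lambda^n(D)^{1/p}\mu(\Gamma)^{-1/p}\|\mathrm{Tr}_{\Omega,\Gamma}u\|_{L^p(\Gamma,\mu)}.\]
Raising this to the $p$-th power, adding $\|\nabla u\|_{L^p(\Omega,\mathbb{R}^n)}^p$ to both sides and using $(a+b)^p\le 2^{p-1}(a^p+b^p)$ yields $\|u\|_{W^{1,p}(\Omega)}\le c\,\|u\|_{W^{1,p}(\Omega),\mu}$ with $c$ depending only on $n,D,p,\varepsilon,d,c_d$ and $\mu(\Gamma)$.

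The only point requiring care — not a genuine obstacle — is tracking the $\mu(\Gamma)$-dependence of the constants. The Poincaré constant $C$ from Theorem~\ref{T:uniPoincare} is independent of $\mu$ by the remark following that theorem, so the dependence of the final constant on $\mu(\Gamma)$ enters only through the normalization factor $\mu(\Gamma)^{-1/p}$ in the reverse inequality and through $c_{\mathrm{Tr}}$ in the direct inequality, both continuous in $\mu(\Gamma)\in(0,\infty)$; in particular $c$ degenerates as $\mu(\Gamma)\to 0$, consistent with the statement. The substantive work is entirely contained in the proof of Theorem~\ref{T:uniPoincare}.
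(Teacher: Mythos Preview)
Your proof is correct and follows essentially the same route as the paper: the right-hand inequality via boundedness of the trace (the paper makes the $\mu(\Gamma)$-dependence explicit by first passing through the $L^q$ bound of Theorem~\ref{ThTracecheap} with $p<q<pd/(n-p)$ and then H\"older, which gives a factor $\mu(\Gamma)^{1/p-1/q}$; your appeal to Corollary~\ref{C:tracesimple} amounts to the same thing), and the left-hand inequality via Theorem~\ref{T:uniPoincare} together with the triangle inequality and the H\"older estimate $|\bar u|\le\mu(\Gamma)^{-1/p}\|\mathrm{Tr}_{\Omega,\Gamma}u\|_{L^p(\Gamma,\mu)}$. Your treatment of the $\mu(\Gamma)$-dependence is in fact slightly more careful than the paper's sketch.
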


We formulate proofs of Theorem \ref{T:uniPoincare} and Corollary \ref{C:equivnorms} using three intermediate statements. In the sequel let $n$, $p$, $d$ and $c_d$ be as specified in Theorem \ref{T:uniPoincare}. 

For any $W^{1,p}$-extension domain $U\subset \mathbb{R}^n$ we write 
\begin{multline}\label{E:cone}
X(U):=\big\lbrace u\in W^{1,p}(U):\ \text{there is a Borel probability measure $\mu$ on $\mathbb{R}^n$}\\
 \text{with $\Gamma:=\supp\mu\subset \overline{U}$, satisfying (\ref{E:upperdreg}) and such that $\int_\Gamma \mathrm{Tr}_{U,\Gamma}u\:d\mu=0$}\big\rbrace.
\end{multline} 

The set $X(U)$ is a cone in the sense that for any $u\in X(U)$ and $\lambda\geq 0$ we have $\lambda u\in X(U)$. Moreover, $X(U)$ does not contain any constant except zero. 

\begin{remark}
In general $X(U)$ is not convex: For $U=(0,1)^2$, $u(x_1,x_2)=x_1$ and $u(x_1,x_2)=1-x_1$ we have 
$u_1, u_2\in X(U)$: Clearly $u_1,u_2\in W^{1,p}(U)$, and $\int u_1 d\mu_1=\int u_2 d\mu_2=0$ for the probability measures 
$\mu_1:=\mathcal{H}^1|_{\{0\}\times (0,1)}$ and $\mu_2:=\mathcal{H}^1|_{\{1\}\times (0,1)}$. 
However, $\frac12(u_1+u_2)=\frac12$ is not in $X(U)$.
\end{remark}

\begin{lemma}\label{L:weaklyclosed}
Let $D\subset \mathbb{R}^n$ be a bounded Lipschitz domain. Then the set $X(D)$ is weakly closed in $W^{1,p}(D)$.
\end{lemma}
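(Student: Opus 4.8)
The goal is to show $X(D)$ is weakly sequentially closed (equivalently, since $W^{1,p}(D)$ is reflexive and $X(D)$ is a cone, weakly closed). So I would start with a sequence $(u_k)_k\subset X(D)$ converging weakly to some $u\in W^{1,p}(D)$, and I must produce a Borel probability measure $\mu$ on $\mathbb{R}^n$ with $\supp\mu\subset\overline{D}$, satisfying \eqref{E:upperdreg}, and with $\int_\Gamma \mathrm{Tr}_{D,\Gamma}u\,d\mu=0$. By definition, each $u_k$ comes equipped with a Borel probability measure $\mu_k$ with $\Gamma_k=\supp\mu_k\subset\overline{D}$, satisfying \eqref{E:upperdreg} with the constant $c_d$, and with $\int_{\Gamma_k}\mathrm{Tr}_{D,\Gamma_k}u_k\,d\mu_k=0$. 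The natural strategy is to pass to a subsequence along which $\mu_k$ converges weakly, and then invoke Lemma \ref{L:traceconvergence} to pass the vanishing-integral condition to the limit.

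\textbf{Key steps.}
First, since all $\mu_k$ are probability measures supported in the fixed compact set $\overline{D}$, the family $(\mu_k)_k$ is tight, so by Prokhorov's theorem there is a subsequence (not relabelled) converging weakly to a Borel probability measure $\mu$; in particular $\mu(\mathbb{R}^n)=1$, so $\mu$ is nonzero, and $\Gamma:=\supp\mu\subset\overline{D}$. Second, I must check that $\mu$ still satisfies \eqref{E:upperdreg} with the same constant $c_d$: for $x\in\Gamma$ and $0<r\le 1$, the open ball $B(x,r)$ is an open set, so by the portmanteau theorem $\mu(B(x,r))\le\liminf_k \mu_k(B(x,r))$; but we need the bound in terms of $\mu_k(B(x,r'))$ for the centers in $\Gamma_k$, not $B(x,r)$. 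The clean way: for any $x\in\Gamma$ and $r\in(0,1)$, pick $r<r'\le 1$ and points $x_k\in\Gamma_k$ with $x_k\to x'$ for some $x'$; more simply, for $\mu(\overline{B(x,r)})\le\limsup_k\mu_k(\overline{B(x,r)})$ and then estimate $\mu_k(\overline{B(x,r)})$: if this ball meets $\Gamma_k$ in some point $y_k$, then $\overline{B(x,r)}\subset \overline{B(y_k,2r)}$, and hence $\mu_k(\overline{B(x,r)})\le\mu_k(B(y_k,2r+\delta))\le c_d(2r+\delta)^d$ for every small $\delta>0$ (using $2r<1$, say, after first reducing to $r\le 1/2$ and handling $1/2<r\le 1$ by a fixed constant) — if the ball does not meet $\Gamma_k$ the $\mu_k$-mass is zero. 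Letting $k\to\infty$ then $\delta\to0$ gives $\mu(B(x,r))\le c_d' r^d$ with a constant $c_d'$ depending only on $c_d$, $d$, $n$; absorbing the factor $2^d$ (or noting $d\le n$) yields \eqref{E:upperdreg} with a constant of the required form. Third, apply Lemma \ref{L:traceconvergence} with the confinement $D$, the measures $\mu_k\rightharpoonup\mu$ (all satisfying \eqref{E:upperdreg} with the uniform constant), $\beta=1$, and the weakly convergent sequence $u_k\rightharpoonup u$ in $W^{1,p}(D)=H^{1,p}(D)$: this gives
\[
\int_\Gamma \mathrm{Tr}_{D,\Gamma}u\,d\mu=\lim_{k\to\infty}\int_{\Gamma_k}\mathrm{Tr}_{D,\Gamma_k}u_k\,d\mu_k=0,
\]
so $u\in X(D)$, proving $X(D)$ weakly sequentially closed. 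Finally, since $W^{1,p}(D)$ is reflexive and separable, weak sequential closedness of the cone $X(D)$ upgrades to weak closedness (a convex set would follow from Mazur, but for a mere cone one uses that weakly closed sets are exactly the weakly sequentially closed ones in a reflexive separable space via the Eberlein–Šmulian theorem applied to bounded subsets — or one simply states the lemma for weak sequential closure, which is all that is used later).

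\textbf{Main obstacle.}
The delicate point is Step two: verifying that the weak limit measure $\mu$ still obeys the local upper $d$-regularity bound \eqref{E:upperdreg} with a constant controlled only by $c_d$, $d$, $n$ (and not worsening along the subsequence), because weak convergence only controls masses of open sets from below and closed sets from above, whereas \eqref{E:upperdreg} is an upper bound centered at points of the \emph{limit} support $\Gamma$. The argument sketched above — comparing a ball centered at $x\in\Gamma$ with a slightly larger ball centered at a nearby point of $\Gamma_k$ and using closed-ball portmanteau together with a radius dilation — resolves this, at the cost of replacing $c_d$ by $2^d c_d$ (harmless). Everything else is a routine application of Prokhorov's theorem and the already-proven Lemma \ref{L:traceconvergence}, whose hypotheses match the present situation exactly with $\beta=1$.
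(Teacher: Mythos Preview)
Your approach matches the paper's almost exactly: extract a weakly convergent subsequence of the witnessing probability measures $(\mu_k)_k$ via Prokhorov (the paper invokes Banach--Alaoglu on $C(\overline{D})'$), check that the weak limit $\mu$ again satisfies \eqref{E:upperdreg}, and then apply Lemma~\ref{L:traceconvergence} with $\beta=1$ to pass the vanishing-integral condition to the limit. The paper outsources your ``Step two'' to \cite[Proposition~2~(i)]{HINZ-2020} rather than arguing it directly.

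There is, however, a small but genuine gap in your Step two. The set $X(D)$ is defined with the \emph{fixed} constant $c_d$ (see the standing assumption immediately preceding \eqref{E:cone}), so establishing \eqref{E:upperdreg} for the limit measure only with constant $2^d c_d$ does \emph{not} place $u$ in $X(D)$; your ``harmless'' dismissal is not justified as written. The repair is to sharpen the argument so as to keep the same constant. For $x\in\Gamma=\supp\mu$ and any $\epsilon>0$ one has $\mu(B(x,\epsilon))>0$, hence by portmanteau $\liminf_k\mu_k(B(x,\epsilon))>0$, so for large $k$ there exist $x_k\in\Gamma_k\cap B(x,\epsilon)$; a diagonal argument produces $x_k\in\Gamma_k$ with $x_k\to x$. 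Then for $0<r<1$ and $k$ large enough that $r+|x-x_k|\le 1$ one has $B(x,r)\subset B(x_k,r+|x-x_k|)$, so $\mu_k(B(x,r))\le c_d\big(r+|x-x_k|\big)^d$, and therefore
\[
\mu(B(x,r))\le\liminf_{k}\mu_k(B(x,r))\le c_d\,r^d.
\]
The endpoint $r=1$ follows by monotone convergence as $r\uparrow 1$. This recovers \eqref{E:upperdreg} with the original $c_d$, hence $u\in X(D)$.

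Your closing remark about upgrading weak sequential closure to weak closure is fine but unnecessary: the paper, like you, proves only weak sequential closedness, and this is all that is used downstream (in the proof of Corollary~\ref{C:PoincareXD}).
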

\begin{proof}
Suppose that $(u_k)_k\subset X(D)$ converges to some $u$ weakly in $W^{1,p}(D)$. For each $k$ let $\mu_k$ be a probability measure corresponding to $u_k$ as in (\ref{E:cone}). By Banach-Alaoglu there is a sequence $(k_j)_j$ of indices such that 
$(\mu_{k_j})_j$ converges to some probability measure $\mu$ weakly on $\overline{D}$. By weak convergence $\Gamma:=\supp \mu$ is contained in $\overline{D}$, and by \cite[Proposition 2 (i)]{HINZ-2020} the measure $\mu$ satisfies (\ref{E:upperdreg}). Writing $\Gamma_k:=\supp \mu_k$ we obtain
\[\int_\Gamma \mathrm{Tr}_{D,\Gamma}u\:d\mu=\lim_j \int_{\Gamma_{k_j}} \mathrm{Tr}_{D,\Gamma_{k_j}}u_{k_j}\:d\mu=0\]
from Lemma \ref{L:traceconvergence}. Consequently $u\in X(D)$.
\end{proof}

We can now conclude a Poincar\'e inequality for elements of $X(D)$.

\begin{corollary}\label{C:PoincareXD}
Let $D\subset \mathbb{R}^n$ be a bounded Lipschitz domain. Then we have 
\begin{equation}\label{E:PoincareXD}
\left\|u\right\|_{L^p(D)}\leq c(n, D,p, d,c_d)\left\|\nabla u\right\|_{L^p(D,\mathbb{R}^n)},\quad u\in X(D),
\end{equation}
with $c(n,D,p,d,c_d)>0$ depending only on $n$, $D$, $p$, $d$ and $c_d$.
\end{corollary}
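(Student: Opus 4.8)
The plan is to argue by contradiction using a standard compactness argument (the "concentration–compactness" or "Rellich" trick), exploiting Lemma \ref{L:weaklyclosed} as the crucial input. First I would suppose that \eqref{E:PoincareXD} fails. Then for every $k\in\mathbb{N}$ there is some $u_k\in X(D)$ with $\|u_k\|_{L^p(D)} > k\,\|\nabla u_k\|_{L^p(D,\mathbb{R}^n)}$. Since $X(D)$ is a cone, I may normalize each $u_k$ so that $\|u_k\|_{L^p(D)}=1$; then $\|\nabla u_k\|_{L^p(D,\mathbb{R}^n)} < 1/k \to 0$. In particular the sequence $(u_k)_k$ is bounded in $W^{1,p}(D)$, so (since $D$ is a bounded Lipschitz domain and hence a $W^{1,p}$-extension domain, $1<p<\infty$ so $W^{1,p}(D)$ is reflexive) after passing to a subsequence we may assume $u_k \rightharpoonup u$ weakly in $W^{1,p}(D)$ for some $u\in W^{1,p}(D)$.

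Next I would extract the strong limit: by the Rellich–Kondrachov theorem for bounded Lipschitz domains, the embedding $W^{1,p}(D)\hookrightarrow L^p(D)$ is compact, so along the subsequence $u_k \to u$ strongly in $L^p(D)$; hence $\|u\|_{L^p(D)}=1$. On the other hand, lower semicontinuity of the norm under weak convergence gives $\|\nabla u\|_{L^p(D,\mathbb{R}^n)} \le \liminf_k \|\nabla u_k\|_{L^p(D,\mathbb{R}^n)} = 0$, so $\nabla u = 0$ a.e. in $D$. Since $D$ is connected (it is a domain), this forces $u$ to be constant on $D$, and $\|u\|_{L^p(D)}=1$ means $u$ is a nonzero constant.

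Finally I would invoke Lemma \ref{L:weaklyclosed}: since $X(D)$ is weakly closed in $W^{1,p}(D)$ and $u_k \rightharpoonup u$ with $u_k \in X(D)$, we get $u\in X(D)$. But $X(D)$ contains no nonzero constant (as recorded in the text just before Lemma \ref{L:weaklyclosed}: for any $u\in X(D)$ there is a Borel probability measure $\mu$ with $\int_\Gamma \operatorname{Tr}_{D,\Gamma}u\,d\mu=0$, and for a constant $c$ the trace is $c$ and the integral is $c$, so $c=0$). This contradicts $u$ being a nonzero constant, and the contradiction establishes \eqref{E:PoincareXD}. Tracking the dependence of the constant: the argument shows existence of a finite $c$; that it depends only on $n$, $D$, $p$, $d$, $c_d$ follows because the only quantitative ingredients are the Rellich embedding constant for $D$ (depending on $n,D,p$) and the weak closedness of $X(D)$, whose proof via Lemma \ref{L:traceconvergence} uses only the uniform upper $d$-regularity constant $c_d$ (and $n,D,p,d$), with the normalization to probability measures removing any dependence on total mass.

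I expect the one point requiring slight care to be the logic of the contradiction argument rather than any single estimate: one must be sure that the normalization $\|u_k\|_{L^p(D)}=1$ is legitimate (it is, since $X(D)$ is a cone and $u_k\ne 0$ as otherwise the defining inequality is vacuous), and that the weak closedness of $X(D)$ is genuinely needed to conclude $u\in X(D)$ — this is where the assumption that $D$ itself is Lipschitz, not merely an $(\varepsilon,\infty)$-domain, is used, since Lemma \ref{L:weaklyclosed} is stated for Lipschitz $D$. Everything else is a routine application of reflexivity, the Rellich–Kondrachov theorem, weak lower semicontinuity of the gradient norm, and connectedness of $D$.
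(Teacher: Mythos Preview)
Your proposal is correct and follows essentially the same contradiction argument as the paper: normalize in $L^p$, use reflexivity to extract a weak $W^{1,p}(D)$-limit, identify the limit as a constant via the vanishing gradients, and invoke Lemma~\ref{L:weaklyclosed} to obtain the contradiction with $X(D)$ containing no nonzero constant. The only cosmetic difference is that the paper applies Lemma~\ref{L:weaklyclosed} first to get $u=0$ and then Rellich--Kondrachov to contradict the normalization, whereas you apply Rellich--Kondrachov first to get $\|u\|_{L^p(D)}=1$ and then Lemma~\ref{L:weaklyclosed} to contradict the fact that the only constant in $X(D)$ is zero; both orderings are equivalent.
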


Apart from the use of $X(D)$ the argument is standard, see for instance \cite[Proposition 7.1]{Egert2015} or \cite[Section 5.8, Theorem 1]{EVANS-1994}.
\begin{proof}
Suppose that (\ref{E:PoincareXD}) does not hold. Then there is a sequence $(u_k)_k\subset X(D)$ such that 
$\frac{1}{k}\left\|u_k\right\|_{L^p(D)}\geq \left\|\nabla u_k\right\|_{L^p(D)}$.
Since $X(D)$ is a cone, we may divide each $u_k$ by its norm and therefore assume that 
\begin{equation}\label{E:asusual}
\left\|u_k\right\|_{L^p(D)}=1\quad \text{for all $k$.}
\end{equation}
Then obviously $\lim_k \nabla u_k=0$ in  $L^p(D,\mathbb{R}^n)$, and since $\sup_k \left\|u_k\right\|_{W^{1,p}(D)}<+\infty$ and $W^{1,p}(D)$ is reflexive, there is a subsequence $(u_{k_j})_j$ that converges to some $u$ weakly in $W^{1,p}(D)$. By Lemma \ref{L:weaklyclosed} we have $u\in X(D)$, and since $\nabla: W^{1,p}(D)\to L^p(D,\mathbb{R}^n)$ is continuous, $(\nabla u_{k_j})_j$ 
converges to $\nabla u$ weakly in $L^p(D,\mathbb{R}^n)$. But this means that $\nabla u=0$, so that $u\in X(D)$ is constant, hence $u=0$, and we see that $\lim_j u_{k_j}=0$ weakly in $W^{1,p}(D)$. By the Rellich-Kondrachov theorem for $D$ it now follows that $\lim_j u_{k_j}=0$ strongly in $L^p(D)$, what contradicts (\ref{E:asusual}).
\end{proof}

The preceding allows to conclude a Poincar\'e inequality simultaneously valid for any $(\varepsilon,\infty)$-domain $\Omega$ contained in $D$ with a uniform constant.

\begin{corollary}\label{C:PoincareXOmega}
Let $D\subset \mathbb{R}^n$ be a bounded Lipschitz domain and $\varepsilon>0$. There is a constant $C(n,D,p,\varepsilon, d,c_d)>0$ depending only on $n$, $D$, $p$, $\varepsilon$, $d$ and $c_d$ such that 
for any $(\varepsilon,\infty)$-domain $\Omega\subset D$ we have 
\begin{equation}\label{E:PoincareXOmega}
\left\|v\right\|_{L^p(\Omega)}\leq C(n,D,p,\varepsilon, d,c_d)\left\|\nabla v\right\|_{L^p(\Omega,\mathbb{R}^n)},\quad v\in X(\Omega).
\end{equation}
\end{corollary}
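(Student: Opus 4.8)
The plan is to transfer the Poincar\'e inequality \eqref{E:PoincareXD} for $X(D)$ to each $(\varepsilon,\infty)$-domain $\Omega\subset D$ by means of the extension operator, using the crucial fact that the operator norm of the extension can be bounded by a constant depending only on $n$, $p$ and $\varepsilon$. The key point is that the homogeneous Jones extension operator (\cite[Theorem 2]{JONES-1981}) provides, for any $(\varepsilon,\infty)$-domain $\Omega$, a bounded linear map $E_\Omega:W^{1,p}(\Omega)\to W^{1,p}(\mathbb{R}^n)$ with $(E_\Omega v)|_\Omega=v$ and with control on $\|\nabla(E_\Omega v)\|_{L^p(\mathbb{R}^n,\mathbb{R}^n)}$ by $C(n,p,\varepsilon)\|\nabla v\|_{L^p(\Omega,\mathbb{R}^n)}$, the constant being uniform over all such $\Omega\subset D$.

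First I would fix an $(\varepsilon,\infty)$-domain $\Omega\subset D$ and take $v\in X(\Omega)$, so there is a Borel probability measure $\mu$ with $\Gamma=\supp\mu\subset\overline\Omega$, satisfying \eqref{E:upperdreg}, and $\int_\Gamma \mathrm{Tr}_{\Omega,\Gamma}v\,d\mu=0$. I would set $w:=E_\Omega v\in W^{1,p}(\mathbb{R}^n)$; since $\overline\Omega\subset\overline D$, we have $\Gamma\subset\overline D$, and $w|_D\in W^{1,p}(D)$. The trace is consistent in the sense that $\mathrm{Tr}_{D,\Gamma}(w|_D)=\mathrm{Tr}_{\Gamma}w=\mathrm{Tr}_{\Omega,\Gamma}v$ $\mu$-a.e.\ on $\Gamma$ (both equal $\widetilde w$ restricted to $\Gamma$, using the definitions of the trace operators via restriction and the fact that $E_\Omega v$ extends $v$). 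Hence $\int_\Gamma \mathrm{Tr}_{D,\Gamma}(w|_D)\,d\mu=0$, which shows $w|_D\in X(D)$. Applying Corollary \ref{C:PoincareXD} to $w|_D$ and then estimating gives
\[
\|v\|_{L^p(\Omega)}\leq \|w\|_{L^p(D)}\leq c(n,D,p,d,c_d)\|\nabla w\|_{L^p(D,\mathbb{R}^n)}\leq c(n,D,p,d,c_d)\,C(n,p,\varepsilon)\|\nabla v\|_{L^p(\Omega,\mathbb{R}^n)},
\]
which is the desired inequality with $C(n,D,p,\varepsilon,d,c_d):=c(n,D,p,d,c_d)\,C(n,p,\varepsilon)$.

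The main obstacle is making sure the extension constant is genuinely uniform over all $(\varepsilon,\infty)$-domains $\Omega\subset D$ and that the gradient of the extension is controlled by the gradient of $v$ alone (not by the full $W^{1,p}(\Omega)$-norm of $v$); this is exactly why the \emph{homogeneous} version \cite[Theorem 2]{JONES-1981} is invoked rather than the inhomogeneous one, and it is the reason a separate $L^p(\Omega)$-bound on $v$ is not needed. A secondary technical point, which I would treat carefully, is the consistency of the various trace procedures — checking that $\mathrm{Tr}_{D,\Gamma}(w|_D)=\mathrm{Tr}_{\Omega,\Gamma}v$ $\mu$-a.e.\ — but this follows directly from the definition of $\mathrm{Tr}_{\Omega,\Gamma}$ as $\mathrm{Tr}_\Gamma\circ E_\Omega$, the definition of $\mathrm{Tr}_{D,\Gamma}$ as $\mathrm{Tr}_\Gamma$ composed with restriction from $\mathbb{R}^n$, and the well-definedness statements established earlier via \cite[Theorem 6.1]{Biegert2009}, since $w=E_\Omega v$ is itself a global representative that restricts to $v$ on $\Omega$ and hence may be used for both.
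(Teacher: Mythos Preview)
Your proof is correct and follows essentially the same approach as the paper: show that $E_\Omega v\in X(D)$ via the trace consistency guaranteed by \cite[Theorem 6.1]{Biegert2009}, apply Corollary \ref{C:PoincareXD}, and then use the homogeneous Jones extension \cite[Theorem 2]{JONES-1981} to control $\|\nabla E_\Omega v\|_{L^p(D,\mathbb{R}^n)}$ by $c(n,p,\varepsilon)\|\nabla v\|_{L^p(\Omega,\mathbb{R}^n)}$. Your explicit attention to why the homogeneous (rather than inhomogeneous) extension is needed is exactly the point.
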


\begin{proof}
Given $v\in X(\Omega)$ we have $E_\Omega v\in X(D)$: If $\mu$ is a Borel probability measure wih support $\Gamma\subset\overline{\Omega}$ with respect to which $\mathrm{Tr}_{\Omega,\Gamma}v$ has integral zero, then also $\mathrm{Tr}_{D,\Gamma}E_\Omega v$ has integral zero with respect to $\mu$ because $\mathrm{Tr}_{\Omega,\Gamma}v=\mathrm{Tr}_{D,\Gamma}E_\Omega v$ $\mu$-a.e. on $\Gamma$ by \cite[Theorem 6.1]{Biegert2009}. Corollary \ref{C:PoincareXD} and \cite[Theorem 2]{JONES-1981} yield
\[\left\|v\right\|_{L^p(\Omega)}\leq \left\|E_\Omega v\right\|_{L^p(D)}\leq c(n,D,p,d,c_d)\left\|\nabla E_\Omega v\right\|_{L^p(D,\mathbb{R}^n)}\leq c(n,D,p,d,c_d)\:c(n,p,\varepsilon)\left\|\nabla v\right\|_{L^p(\Omega,\mathbb{R}^n)},\]
here $c(n,p,\varepsilon)>0$ is the norm of the extension operator $E_\Omega$ in \cite[Theorem 2]{JONES-1981}.
\end{proof}

Now Theorem \ref{T:uniPoincare} and Corollary \ref{C:equivnorms} follow easily.

\begin{proof}[Proof of Theorem \ref{T:uniPoincare}]
For any $u\in W^{1,p}(\Omega)$ the function $v:=u-\frac{1}{\mu(\Gamma)}\int_\Gamma \mathrm{Tr}_{\Omega,\Gamma} u\:d\mu$ is in $X(\Omega)$, and Corollary \ref{C:PoincareXOmega} yields the desired inequality.
\end{proof}

\begin{proof}[Proof of Corollary \ref{C:equivnorms}]
With $p<q<pd/(n-p)$ H\"older's inequality, Theorem \ref{ThTracecheap} and \cite[Theorem 1]{JONES-1981} yield
\[\left\|\mathrm{Tr}_{\Omega,\Gamma}u\right\|_{L^p(\Gamma,\mu)}\leq \mu(\Gamma)^{1/p-1/q}\left\|\mathrm{Tr}_{\Omega,\Gamma}u\right\|_{L^q(\Gamma,\mu)}\leq \mu(\Gamma)^{1/p-1/q}c_{\mathrm{Tr}}(n,p,d,c_d,q)c(n,p,\varepsilon)\left\|u\right\|_{W^{1,p}(\Omega)}.\]
On the other hand, Theorem \ref{T:uniPoincare} implies
\[\left\|u\right\|_{L^p(\Omega)}\leq c(n,D,p,\varepsilon,d,c_d)\left\|\nabla u\right\|_{L^p(\Omega,\mathbb{R}^n)}+\mu(\Gamma)^{-1}\left\|\mathrm{Tr}_{\Omega,\Gamma}u\right\|_{L^p(\Gamma,\mu)}.\] 
\end{proof}

As an auxiliary observation we state the following version of Corollary \ref{C:PoincareXD}, it resembles the classical Poincar\'e inequality for balls, \cite[Section 5.8, Theorem 2]{EVANS-1994}.
\begin{corollary}
For each $x\in\mathbb{R}^n$ and $r>0$ we have 
\[\left\|u\right\|_{L^p(B(x,r))}\leq c(n,p,d,c_d)\:r\:\left\|\nabla u\right\|_{L^p(B(x,r),\mathbb{R}^n)}, \quad u\in X(B(x,r)).\]
\end{corollary}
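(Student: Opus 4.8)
The plan is to reduce to the unit ball $B(0,1)$ by translation and dilation and then invoke Corollary \ref{C:PoincareXD} with $D=B(0,1)$, which is a bounded Lipschitz domain. Since the $L^p$-norms, the gradient, the regularity condition \eqref{E:upperdreg} and the cone $X(\cdot)$ in \eqref{E:cone} are all invariant under translations of $\mathbb{R}^n$, I may assume $x=0$ and work with $B(0,r)$.

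So fix $u\in X(B(0,r))$ together with a witnessing Borel probability measure $\mu$ as in \eqref{E:cone}: $\Gamma=\supp\mu\subset\overline{B(0,r)}$, $\mu$ satisfies \eqref{E:upperdreg}, and $\int_\Gamma \mathrm{Tr}_{B(0,r),\Gamma}u\,d\mu=0$. Writing $T_r\colon y\mapsto ry$, put $\tilde u:=u\circ T_r|_{B(0,1)}$, which lies in $W^{1,p}(B(0,1))$ by the chain rule, and let $\tilde\mu$ be the push-forward of $\mu$ under $z\mapsto z/r$, a Borel probability measure with support $\Gamma/r\subset\overline{B(0,1)}$. I would first check $\tilde u\in X(B(0,1))$ with witness $\tilde\mu$. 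For the trace constraint: if $E_{B(0,r)}$ is a $W^{1,p}$-extension operator for $B(0,r)$, then $(E_{B(0,r)}u)\circ T_r\in W^{1,p}(\mathbb{R}^n)$ restricts to $\tilde u$ on $B(0,1)$, so it is an admissible extension of $\tilde u$; since the Lebesgue-point redefinition \eqref{E:redefinition} intertwines with the affine map $T_r$ (the average of $h\circ T_r$ over $B(w,\rho)$ equals the average of $h$ over $B(rw,r\rho)$), one gets $\mathrm{Tr}_{B(0,1),\Gamma/r}\tilde u=\big(\mathrm{Tr}_{B(0,r),\Gamma}u\big)\circ T_r$ $\tilde\mu$-a.e., and the change-of-variables formula for the push-forward yields $\int_{\Gamma/r}\mathrm{Tr}_{B(0,1),\Gamma/r}\tilde u\,d\tilde\mu=\int_\Gamma\mathrm{Tr}_{B(0,r),\Gamma}u\,d\mu=0$. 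For the regularity constraint one computes $\tilde\mu(B(w,t))=\mu(B(rw,rt))$ and feeds in \eqref{E:upperdreg} for $\mu$.

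Granting $\tilde u\in X(B(0,1))$, Corollary \ref{C:PoincareXD} with $D=B(0,1)$ gives $\|\tilde u\|_{L^p(B(0,1))}\le c(n,B(0,1),p,d,c_d)\,\|\nabla\tilde u\|_{L^p(B(0,1),\mathbb{R}^n)}$. Changing variables back via $T_r$ produces the identities $\|\tilde u\|_{L^p(B(0,1))}=r^{-n/p}\|u\|_{L^p(B(0,r))}$ and $\|\nabla\tilde u\|_{L^p(B(0,1),\mathbb{R}^n)}=r^{1-n/p}\|\nabla u\|_{L^p(B(0,r),\mathbb{R}^n)}$; inserting these into the previous inequality and multiplying through by $r^{n/p}$ gives exactly $\|u\|_{L^p(B(0,r))}\le c(n,B(0,1),p,d,c_d)\,r\,\|\nabla u\|_{L^p(B(0,r),\mathbb{R}^n)}$, so the claim holds with $c(n,p,d,c_d):=c(n,B(0,1),p,d,c_d)$.

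The delicate point is the verification that the rescaled measure $\tilde\mu$ is still an admissible witness for $\tilde u\in X(B(0,1))$: unlike the $L^p$-norms and the gradient, condition \eqref{E:upperdreg} is \emph{not} dilation invariant, so one must control how its constant transforms under $T_r$. For $0<r\le 1$ this is immediate, since $\tilde\mu(B(w,t))=\mu(B(rw,rt))\le c_d(rt)^d\le c_d t^d$ for all $0<t\le 1$, so \eqref{E:upperdreg} holds for $\tilde\mu$ with the \emph{same} constant $c_d$ and the argument above goes through verbatim; this normalized case is the heart of the matter. Tracking the regularity constant is the only nontrivial ingredient; everything else is the routine affine change-of-variables bookkeeping that also underlies the classical Poincaré inequality for balls \cite[Section 5.8, Theorem 2]{EVANS-1994}.
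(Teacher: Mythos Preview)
Your strategy is identical to the paper's: translate and dilate to the unit ball via $\ell(y)=x+ry$, show that the rescaled function lies in $X(B(0,1))$ with witness the push-forward probability measure, apply Corollary~\ref{C:PoincareXD} with $D=B(0,1)$, and undo the scaling. The paper's proof carries out exactly this computation, using the identity for ball averages under $\ell$ to match the traces.

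You have, however, put your finger on a point the paper's proof does not address: condition~\eqref{E:upperdreg} is not dilation-invariant, so the push-forward $\tilde\mu$ need not satisfy it with the \emph{same} constant $c_d$. Your verification for $0<r\le 1$ is correct and complete, since then $\tilde\mu(B(w,t))=\mu(B(rw,rt))\le c_d(rt)^d\le c_d t^d$ for all $0<t\le 1$. For $r>1$, though, the push-forward satisfies \eqref{E:upperdreg} only with a constant of order $r^d$: when $rt\le 1$ one gets $\tilde\mu(B(w,t))\le c_d r^d t^d$, and when $rt>1$ only the trivial bound $\tilde\mu(B(w,t))\le 1\le r^d t^d$. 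Feeding this into Corollary~\ref{C:PoincareXD} would return $c(n,B(0,1),p,d,c_d r^d)$, an $r$-dependent constant, contrary to the claim. Your proposal explicitly leaves this case open (``this normalized case is the heart of the matter''), and the paper's proof, as written, has the same gap. A complete argument for $r>1$ would need an extra step, for instance combining the classical Poincar\'e inequality $\|u-u_B\|_{L^p(B)}\le C_n r\|\nabla u\|_{L^p(B)}$ with a bound on $|u_B|$ obtained from the zero-trace-average condition and the uniform trace estimate of Corollary~\ref{C:tracesimple}.
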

\begin{proof}
We use the shortcut notation $B:=B(x,r)$ and set $\ell(y):=x+ry$, $y\in\mathbb{R}^n$. Suppose that $u\in W^{1,p}(B)$ and $\mu$ is a probability measure with $\Gamma=\supp\mu\subset \overline{B}$ such that $\int_\Gamma u\:d\mu=0$. Then $v:=u\circ \ell$ is an element of $W^{1,p}(B(0,1))$. Clearly $w:=E_Bu\circ \ell$ is an extension of $v$ to an element of $W^{1,p}(\mathbb{R}^n)$, and the pointwise redefinition $\widetilde{w}$ of $w$ as in (\ref{E:redefinition}) is $(E_B u)^\sim\circ \ell$, note that
\begin{equation}\label{E:averages}
\frac{1}{\lambda^n(B(0,\varepsilon))}\int_{B(y,\varepsilon)}w(z)dz=\frac{1}{\lambda^n(B(0,r\varepsilon))}\int_{B(x+ry,r\varepsilon)}(E_B u)(\zeta)\:d\zeta
\end{equation}
for any $\varepsilon>0$. The push-forward $\nu:=(\ell^{-1})_\ast \mu$ of $\mu$ under $\ell^{-1}$ has support $\Gamma':=\supp\nu\subset \overline{B(0,1)}$, and since by \cite[Theorem 6.1]{Biegert2009} (or \cite[Theorem 1]{WALLIN-1991}) we have $\mathrm{Tr}_{B(0,1),\Gamma'}v=\mathrm{Tr}_{\Gamma'}w$, equality (\ref{E:averages}) implies 
\[\int_{\Gamma'}\mathrm{Tr}_{B(0,1),\Gamma'}v\:d\nu=\int_{\Gamma'}\widetilde{w}\:d\nu=\int_{\Gamma'}(E_B u)^\sim d\mu=\int_\Gamma \mathrm{Tr}_{B,\Gamma}u\:d\mu=0.\]
Consequently $v\in X(B(0,1))$, and by Corollary \ref{C:PoincareXD}, $\left\|v\right\|_{L^p(B(0,1))}\leq c(n,p,d,c_d)\:\left\|\nabla v\right\|_{L^p(B(0,1)),\mathbb{R}^n)}$, what yields the statement.
\end{proof}

\section{Generalized boundary value problems}\label{S:bvp}

To discuss generalized boundary value problems we specialize again to the case $p=2$. Let $\Omega$ be a $W^{1,2}$-extension domain (not necessarily bounded). Suppose that $\mu$ is a nonzero finite Borel measure whose support $\supp\mu=\Gamma$ is a subset of $\overline{\Omega}$ and which satisfies \eqref{E:upperdreg} with some $0\vee (n-2)<d\leq n$. We write 
\begin{equation}\label{E:VOmegaGamma}
V(\Omega,\Gamma):=\left\lbrace v\in W^{1,2}(\Omega):\ (E_\Omega v)^\sim =0\ \text{$W^{1,2}(\mathbb{R}^n)$-q.e. on $\Gamma$}\right\rbrace
\end{equation}
for the closed subspace of $W^{1,2}(\Omega)$ consisting of elements with zero trace on $\Gamma$ in the $W^{1,2}(\mathbb{R}^n)$-q.e. sense. Let $\alpha\geq 0$, $f\in L^2(\Omega)$ and $\varphi\in \mathrm{Tr}_{\Gamma}(W^{1,2}(\mathbb{R}^n))$. We make the \emph{generalized Dirichlet problem} 
\begin{equation}\label{E:abstractDirichlet}
\begin{cases}
\quad (\alpha-\Delta) u&=f\quad \text{on $\Omega\setminus \Gamma$}\\
\quad \mathrm{Tr}_{\Omega,\Gamma} u & =\varphi\quad \text{on $\Gamma$}
\end{cases}
\end{equation}
rigorous by saying that $u\in W^{1,2}(\Omega)$ is a \emph{weak solution} to (\ref{E:abstractDirichlet}) if
$\mathrm{Tr}_{\Omega,\Gamma} u  =\varphi$ holds $W^{1,2}(\mathbb{R}^n)$-q.e. on $\Gamma$ and we have 
\begin{equation}\label{E:weaksolD}
\int_{\Omega}\nabla u \nabla v\ dx+\alpha \int_\Omega uv\ dx=\int_{\Omega} f\:v\ dx, \quad v\in V(\Omega,\Gamma).
\end{equation}

%We point out that $V(\mathbb{R}^n,\Gamma)=\mathring{H}^1_2(\mathbb{R}^n\setminus \Gamma)$. % Moreover, if $\mu'$ is a finite Borel measure equivalent to $\mu$ and satisfying (\ref{E:upperdreg}) (possible with a different constant $c_d'>0$), then $V(\Omega,\Gamma)$ defined using $\mu'$ is the same space as $V(\Omega,\Gamma)$ defined using $\mu$ in (\ref{E:VOmegaGamma}).

\begin{remark}\mbox{}
\begin{enumerate}
\item[(i)] For $\Omega=\mathbb{R}^n$ and $f\equiv 0$ problem (\ref{E:abstractDirichlet}) is just problem (\ref{E:Dirichletfortrace}) with $\beta=1$. 
\item[(ii)] A function $u\in W^{1,2}(\Omega)$ is a weak solution to (\ref{E:abstractDirichlet}) if and only if the first equation in (\ref{E:abstractDirichlet}) holds in $\mathcal{D}'(\Omega\setminus \Gamma)$ and the second holds $W^{1,2}(\mathbb{R}^n)$-q.e. If $u$ is a weak solution, then by the quality of $f$ the first equation holds in $L^2(\Omega\setminus \Gamma)$ and by the quality of $\varphi$ the second holds in $L^2(\Gamma,\mu)$. 
\item[(iii)] Suppose that $u$ is a weak solution to (\ref{E:abstractDirichlet}) and that $\mu'$ is a finite Borel measure equivalent to $\mu$ and satisfying (\ref{E:upperdreg}) (possibly with a different constant $c_d'>0$). Then, trivially, $u$ is also a weak solution to \eqref{E:abstractDirichlet} based on $\mu'$ in place of $\mu$.
\end{enumerate}

\end{remark}

\begin{proposition}\label{P:exDirichlet}
Let $\Omega$ be a $W^{1,2}$-extension domain. Let $0\vee(n-2)<d\leq n$ and suppose that $\mu$ is a nonzero finite Borel measure satisfying \eqref{E:upperdreg} whose support $\Gamma=\supp\mu$ is a subset of $\overline{\Omega}$. Suppose that $\alpha\geq 0$, $f\in L^2(\Omega)$ and $\varphi \in \mathrm{Tr}_\Gamma(W^{1,2}(\mathbb{R}^n))$. 

If $\alpha>0$ or $\Omega$ is bounded, then there is a unique weak solution $u\in W^{1,2}(\Omega)$  to \eqref{E:abstractDirichlet}. 
\end{proposition}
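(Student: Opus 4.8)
The plan is to reduce \eqref{E:abstractDirichlet} to a homogeneous problem and then invoke Lax--Milgram (equivalently, the Riesz representation theorem, since the bilinear form is symmetric). First I would fix the harmonic extension $w:=H_{\Gamma,\Omega}\varphi\in W^{1,2}(\Omega)$ of the boundary datum, provided by Corollary \ref{C:extensiontodomain}, which satisfies $\mathrm{Tr}_{\Omega,\Gamma}w=\varphi$. Since $\mathrm{Tr}_{\Omega,\Gamma}$ is linear and $V(\Omega,\Gamma)$ is exactly the space of $W^{1,2}(\Omega)$-functions with vanishing trace, a function $u\in W^{1,2}(\Omega)$ solves \eqref{E:abstractDirichlet} weakly if and only if $u_0:=u-w\in V(\Omega,\Gamma)$ and
\[\int_\Omega\nabla u_0\cdot\nabla v\,dx+\alpha\int_\Omega u_0 v\,dx=\int_\Omega fv\,dx-\int_\Omega\nabla w\cdot\nabla v\,dx-\alpha\int_\Omega wv\,dx,\qquad v\in V(\Omega,\Gamma).\]
The right-hand side is a bounded linear functional $L$ on the Hilbert space $V(\Omega,\Gamma)$: by the Cauchy--Schwarz inequality each term is bounded by a constant times $\left\|v\right\|_{W^{1,2}(\Omega)}$, using $f\in L^2(\Omega)$ and $w\in W^{1,2}(\Omega)$.

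Next I would check that the symmetric bilinear form $a(u_0,v):=\int_\Omega\nabla u_0\cdot\nabla v\,dx+\alpha\int_\Omega u_0v\,dx$ is bounded and coercive on $V(\Omega,\Gamma)$. Boundedness is immediate. For coercivity I would distinguish the two cases covered by the hypotheses. If $\alpha>0$, then $a(v,v)=\left\|\nabla v\right\|_{L^2(\Omega)}^2+\alpha\left\|v\right\|_{L^2(\Omega)}^2\geq\min(1,\alpha)\left\|v\right\|_{W^{1,2}(\Omega)}^2$. If $\alpha=0$ and $\Omega$ is bounded, coercivity reduces to a Poincar\'e inequality $\left\|v\right\|_{L^2(\Omega)}\leq C\left\|\nabla v\right\|_{L^2(\Omega)}$ valid for all $v\in V(\Omega,\Gamma)$, which then gives $a(v,v)=\left\|\nabla v\right\|_{L^2(\Omega)}^2\geq(1+C^2)^{-1}\left\|v\right\|_{W^{1,2}(\Omega)}^2$. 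Granting coercivity, Lax--Milgram yields a unique $u_0\in V(\Omega,\Gamma)$ with $a(u_0,v)=L(v)$ for all $v\in V(\Omega,\Gamma)$, and $u:=u_0+w$ is the sought weak solution; uniqueness follows because the difference of two weak solutions lies in $V(\Omega,\Gamma)$ and is $a$-orthogonal to itself, hence vanishes by coercivity.

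The hard part is the Poincar\'e inequality on $V(\Omega,\Gamma)$ when $\alpha=0$ and $\Omega$ is only assumed bounded and $W^{1,2}$-extension (so Theorem \ref{T:uniPoincare} is not directly applicable). I would argue by contradiction and compactness, in the spirit of the proof of Corollary \ref{C:PoincareXD}: if the inequality fails there are $v_k\in V(\Omega,\Gamma)$ with $\left\|v_k\right\|_{L^2(\Omega)}=1$ and $\left\|\nabla v_k\right\|_{L^2(\Omega)}\to 0$. This sequence is bounded in the reflexive space $W^{1,2}(\Omega)$, so along a subsequence it converges weakly to some $v$; since $V(\Omega,\Gamma)$ is a closed subspace it is weakly closed and $v\in V(\Omega,\Gamma)$, and since $\nabla$ is weakly continuous and $\nabla v_k\to 0$ we get $\nabla v=0$, i.e. $v$ is constant. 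Here I would use the subtle point (already implicit in the remark following \eqref{E:cone}) that the trace of a constant function $c$ on $\Omega$ equals $c$ $\mu$-a.e. on $\Gamma$ --- which one sees by extending $c$ via $c\chi$ for a cutoff $\chi\in C_c^\infty(\mathbb{R}^n)$ with $\chi\equiv 1$ near $\overline{\Omega}$ and invoking the well-definedness of $\mathrm{Tr}_{\Omega,\Gamma}$ --- so membership of $v$ in $V(\Omega,\Gamma)$ together with $\mu(\Gamma)>0$ forces the constant to be $0$. Finally, the Rellich--Kondrachov theorem for bounded $W^{1,2}$-extension domains (multiply the extensions $E_\Omega v_k$ by a fixed cutoff and use compactness of the embedding into $L^2$ on a ball containing $\overline{\Omega}$) gives $v_k\to v=0$ strongly in $L^2(\Omega)$, contradicting $\left\|v_k\right\|_{L^2(\Omega)}=1$. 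This establishes the Poincar\'e inequality and completes the argument.
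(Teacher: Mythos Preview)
Your proof is correct and follows essentially the same approach as the paper's: reduce to a homogeneous problem via an extension of $\varphi$, then apply Lax--Milgram with coercivity coming either directly from $\alpha>0$ or, when $\alpha=0$ and $\Omega$ is bounded, from a Poincar\'e inequality for functions in $V(\Omega,\Gamma)$. You supply considerably more detail than the paper (which merely cites references for the Poincar\'e step), and your compactness argument for that inequality---including the care about traces of constants and the use of Rellich--Kondrachov via extension to a larger ball---is exactly the ``standard argument'' the paper invokes elsewhere (see the proof of Proposition~\ref{P:exRobin}).
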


%{\color{blue} In the special case that $f\equiv 0$ we also write $H_{\Gamma,\Omega}^\alpha \varphi$ for this unique weak solution.}

\begin{proof}
If $\alpha>0$ then the result follows using a similar argument as in Proposition \ref{P:fracext} and Lax-Milgram. If $\alpha=0$ and $\Omega$ is bounded we can use Poincar\'e's inequality for functions with zero trace on $\Gamma$ to obtain the necessary coercivity bound, see for instance \cite[Section 1.3]{GIRAULT-1986} or \cite[Section 22.2a]{ZEIDLER}. %The last statement is trivial. % a direct consequence of Proposition \ref{P:tracespacedoesnotdepend}.
\end{proof}

To discuss other types of boundary value problems we introduce generalized normal derivatives by a variant of a standard procedure. We say that $u\in W^{1,2}(\Omega)$ is in $\mathcal{D}_{\Omega,\Gamma}(\Delta)$ if there is some $f\in L^2(\Omega)$ such that 
\[\int_\Omega \nabla u \nabla v\ dx=-\int_\Omega f\ v\ dx,\quad v\in V(\Omega,\Gamma).\]
For such $u$ we set $\Delta u:=f$. For all $u\in \mathcal{D}_{\Omega,\Gamma}(\Delta)$ we can define  a bounded linear functional $\frac{\partial u}{\partial n_\Gamma} \in (\mathrm{Tr}_{\Gamma}(W^{1,2}(\mathbb{R}^n)))'$ by the identity
		\begin{multline}\label{FracGreen}
				\langle \frac{\partial u}{\partial n_\Gamma}, 
		\psi \rangle_{(\mathrm{Tr}_{\Gamma}(W^{1,2}(\mathbb{R}^n)))',\mathrm{Tr}_{\Gamma}(W^{1,2}(\mathbb{R}^n))}\\
		=\int_\Omega H_{\Gamma,\Omega} \psi\: \Delta u\:dx + \int_\Omega \nabla H_{\Gamma,\Omega} \psi \: \nabla u  \:dx, \quad \psi \in \mathrm{Tr}_{\Gamma}(W^{1,2}(\mathbb{R}^n)),
		\end{multline}
here $H_{\Gamma,\Omega}$ is defined as in (\ref{E:harmext}). We refer to $\frac{\partial u}{\partial n_\Gamma}$ as the \emph{generalized normal derivative of $u$ on $\Gamma$}. 

\begin{remark}\mbox{}
\begin{enumerate}
\item[(i)] The right hand side of equality (\ref{FracGreen}) remains unchanged if the function $H_{\Gamma,\Omega}\psi$ is replaced by an arbitrary element $w$ of $W^{1,2}(\Omega)$ satisfying $\mathrm{Tr}_{\Omega,\Gamma}w=\psi$. This is due to  the orthogonal decomposition of $W^{1,2}(\Omega)$ into $V(\Omega,\Gamma)$ plus $\{H_{\Gamma,\Omega}\psi: \psi \in \mathrm{Tr}_{\Gamma}(W^{1,2}(\mathbb{R}^n))\}$, which follows from (\ref{E:weaksolD}), see \cite[Section 2.3]{FOT94}.  
\item[(ii)] If $\Omega$ is bounded and $\Gamma=\partial\Omega$, then $\frac{\partial u}{\partial n_{\Gamma}}$ is the generalized normal derivative $\frac{\partial u}{\partial n}$ on $\partial\Omega$ as discussed in \cite[Theorem 7]{HINZ-2020}.
\item[(iii)] If $\mu'$ is another Borel measure satisfying \eqref{E:upperdreg} (with a possibly different constant $c_d'>0$) and $\mu'$ is equivalent to $\mu$, then by Proposition \ref{P:tracespacedoesnotdepend} the generalized normal derivative $\frac{\partial u}{\partial n_\Gamma}$ of $u$ on $\Gamma$ based on $\mu'$ is the same as the one based on $\mu$.
 \end{enumerate}
\end{remark}
	
Now let $\alpha\geq 0$, let $\gamma\in L^\infty(\Gamma,\mu)$ be a Borel function, $f\in L^2(\Omega)$ and $\varphi\in \mathrm{Tr}_{\Gamma}(W^{1,2}(\mathbb{R}^n))$. We make the \emph{generalized Robin problem} 
\begin{equation}\label{E:abstractRobin}
\begin{cases}
\qquad (\alpha-\Delta) u &= f\quad \text{in $\Omega\setminus \Gamma$} \\
\frac{\partial u}{\partial n_\Gamma} +\gamma \operatorname{Tr}_{\Omega, \Gamma}u &=\varphi\quad \text{on $\Gamma$}.
\end{cases}
\end{equation}
rigorous by saying that $u\in W^{1,2}(\Omega)$ is a \emph{weak solution} to (\ref{E:abstractRobin}) if $u$ solves 
\begin{equation}\label{E:weaksolR}
\int_\Omega \nabla u\:\nabla v\ dx+\alpha\int_\Omega uv\ dx+\int_\Gamma \gamma\: \mathrm{Tr}_{\Omega,\Gamma} u\: \mathrm{Tr}_{\Omega,\Gamma} v\ d\mu=\int_\Omega f\:v\ dx+\int_\Gamma\varphi\:\mathrm{Tr}_{\Omega,\Gamma} v\ d\mu,\quad v\in W^{1,2}(\Omega).\end{equation}
Note that the special case $\gamma\equiv 0$ makes (\ref{E:abstractRobin}) an \emph{generalized Neumann problem}. 

\begin{remark}\label{R:weakvsL2} If $\alpha>0$ or $\gamma\not\equiv 0$, then we can observe the following.
\begin{enumerate}
\item[(i)] A function $W^{1,2}(\Omega)$ is a weak solution to (\ref{E:abstractRobin}) if and only if it satisfies the first equation in (\ref{E:abstractRobin}) in $\mathcal{D}'(\Omega\setminus \Gamma)$ and the second in $(\mathrm{Tr}_{\Gamma}(W^{1,2}(\mathbb{R}^n)))'$. If $u$ is a weak solution, then by the quality of $f$ also the first equation holds in $L^2(\Omega\setminus \Gamma)$ and by the quality of $\varphi$ the second equation holds in $L^2(\Gamma,\mu)$ and in particular, $\frac{\partial u}{\partial n_\Gamma}\in L^2(\Gamma,\mu)$.
\item[(ii)] Suppose that $u$ is a weak solution to (\ref{E:abstractRobin}) and that $\mu'$ is a finite Borel measure equivalent to $\mu$ and satisfying (\ref{E:upperdreg}) (possibly with a different constant $c_d'>0$). Then $u$ is also a weak solution to \eqref{E:abstractRobin} based on $\mu'$ in place of $\mu$.
\end{enumerate}
\end{remark}

\begin{remark}
Suppose that $\alpha=0$, $\gamma\equiv 0$ and $\Omega$ is bounded. If $f$ and $\varphi$ satisfy the condition
\begin{equation}\label{E:compatible}
\int_\Omega f\ dx+\int_\Gamma \varphi\ d\mu=0,
\end{equation}
then similar observations as in Remark \ref{R:weakvsL2} (i) remain true. If (\ref{E:compatible}) is not satisfied, one can  investigate the variational problem (\ref{E:weaksolR}) with $W^{1,2}(\Omega)$ replaced by 
\[V(\Omega)=\Big\lbrace u\in W^{1,2}(\Omega):\ \int_\Omega u\:dx=0\Big\rbrace,\]
which, endowed with the norm $\left\|u\right\|_{V(\Omega)}:=\left\|\nabla u\right\|_{L^2(\Omega,\mathbb{R}^n)}$, is a Hilbert space. However, solutions of this variational problem may not directly correspond to (\ref{E:abstractRobin}). An analog of Remark \ref{R:weakvsL2} (ii) remains true in either case. 
\end{remark}

\begin{proposition}\label{P:exRobin}
Let $\Omega$ be a $W^{1,2}$-extension domain. Let $0\vee(n-2)<d\leq n$ and suppose that $\mu$ is a nonzero finite Borel measure satisfying \eqref{E:upperdreg} whose support $\Gamma=\supp\mu$ is a subset of $\overline{\Omega}$. Suppose that $\alpha\geq 0$, $\gamma\in L^\infty(\Gamma,\mu)$ is nonnegative, $f\in L^2(\Omega)$ and $\varphi \in \mathrm{Tr}_\Gamma(W^{1,2}(\mathbb{R}^n))$. 

If $\alpha>0$, then there is a unique weak solution $u$  to \eqref{E:abstractRobin}. If $\alpha=0$, $\Omega$ is bounded and $\gamma$ is bounded away from zero, then there is a unique weak solution $u$  to \eqref{E:abstractRobin}.  If $\alpha=0$, $\Omega$ is bounded, $\gamma\equiv 0$ and (\ref{E:compatible}) holds, then there is a weak solution $u$  to \eqref{E:abstractRobin}, and it is unique in $V(\Omega)$.
\end{proposition}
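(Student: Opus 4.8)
The plan is to realize each weak formulation as an identity $B(u,v)=L(v)$ for a bounded, coercive bilinear form and to invoke the Lax--Milgram lemma. Put
\[B(u,v):=\int_\Omega\nabla u\,\nabla v\,dx+\alpha\int_\Omega uv\,dx+\int_\Gamma\gamma\,\mathrm{Tr}_{\Omega,\Gamma}u\,\mathrm{Tr}_{\Omega,\Gamma}v\,d\mu,\qquad L(v):=\int_\Omega fv\,dx+\int_\Gamma\varphi\,\mathrm{Tr}_{\Omega,\Gamma}v\,d\mu;\]
then $u\in W^{1,2}(\Omega)$ is a weak solution of \eqref{E:abstractRobin} exactly when $B(u,v)=L(v)$ for every $v\in W^{1,2}(\Omega)$. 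Boundedness of $B$ on $W^{1,2}(\Omega)$ is clear from the Cauchy--Schwarz inequality for the first two terms and, for the third, from $\gamma\in L^\infty(\Gamma,\mu)$ together with the continuity of $\mathrm{Tr}_{\Omega,\Gamma}\colon W^{1,2}(\Omega)\to L^2(\Gamma,\mu)$, which holds by Corollary \ref{C:tracesimple} since $\mu$ is finite and $(n-d)/2<1$. The same trace bound, combined with $f\in L^2(\Omega)$ and $\varphi\in\mathrm{Tr}_\Gamma(W^{1,2}(\mathbb{R}^n))\subset L^2(\Gamma,\mu)$, shows $L$ is bounded on $W^{1,2}(\Omega)$.

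When $\alpha>0$, coercivity is immediate: since $\gamma\ge 0$ we get $B(u,u)\ge\|\nabla u\|_{L^2(\Omega,\mathbb{R}^n)}^2+\alpha\|u\|_{L^2(\Omega)}^2\ge\min(1,\alpha)\|u\|_{W^{1,2}(\Omega)}^2$, and Lax--Milgram yields a unique weak solution. When $\alpha=0$ but $\Omega$ is bounded and $\gamma\ge\gamma_0>0$ $\mu$-a.e., one has $B(u,u)\ge\|\nabla u\|_{L^2(\Omega,\mathbb{R}^n)}^2+\gamma_0\|\mathrm{Tr}_{\Omega,\Gamma}u\|_{L^2(\Gamma,\mu)}^2$, so coercivity on $W^{1,2}(\Omega)$ reduces to the Poincar\'e inequality with trace term $\|u\|_{L^2(\Omega)}^2\le C\big(\|\nabla u\|_{L^2(\Omega,\mathbb{R}^n)}^2+\|\mathrm{Tr}_{\Omega,\Gamma}u\|_{L^2(\Gamma,\mu)}^2\big)$ for the fixed triple $(\Omega,\Gamma,\mu)$. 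I would obtain this by the contradiction/compactness scheme of Corollary \ref{C:PoincareXD}: were it false there would be $u_k$ with $\|u_k\|_{L^2(\Omega)}=1$ and $\|\nabla u_k\|_{L^2(\Omega,\mathbb{R}^n)}^2+\|\mathrm{Tr}_{\Omega,\Gamma}u_k\|_{L^2(\Gamma,\mu)}^2\to 0$; along a subsequence $u_{k_j}\rightharpoonup u$ weakly in $W^{1,2}(\Omega)$, whence $\nabla u=0$ by weak continuity of $\nabla$, so $u$ is a constant since $\Omega$ is connected, and $\mathrm{Tr}_{\Omega,\Gamma}u=0$ by weak continuity of $\mathrm{Tr}_{\Omega,\Gamma}$, so that constant is $0$ because $\mu(\Gamma)>0$; but the embedding $W^{1,2}(\Omega)\hookrightarrow L^2(\Omega)$ is compact for a bounded $W^{1,2}$-extension domain (extend, apply Rellich--Kondrachov on a ball containing $\overline\Omega$, restrict), so $u_{k_j}\to 0$ in $L^2(\Omega)$ along a further subsequence, contradicting $\|u_k\|_{L^2(\Omega)}=1$. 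Then Lax--Milgram applies and gives the unique weak solution.

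For the remaining case ($\alpha=0$, $\Omega$ bounded, $\gamma\equiv 0$, and \eqref{E:compatible}) I would restrict attention to the Hilbert space $V(\Omega)$ normed by $\|u\|_{V(\Omega)}=\|\nabla u\|_{L^2(\Omega,\mathbb{R}^n)}$. There $B(u,v)=\int_\Omega\nabla u\,\nabla v\,dx$ is coercive by the very choice of norm, while $L$ is bounded on $V(\Omega)$ because the classical mean-zero Poincar\'e inequality on the bounded extension domain $\Omega$ (again via Rellich--Kondrachov) controls $\|v\|_{L^2(\Omega)}$, hence $\|v\|_{W^{1,2}(\Omega)}$, by a multiple of $\|v\|_{V(\Omega)}$, after which the trace bound handles the boundary integral. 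Lax--Milgram (or the Riesz representation theorem, since $B$ is symmetric) provides a unique $u\in V(\Omega)$ with $B(u,v)=L(v)$ for all $v\in V(\Omega)$. To extend this identity to all $v\in W^{1,2}(\Omega)$ one writes $v=v_0+c$ with $v_0:=v-\lambda^n(\Omega)^{-1}\int_\Omega v\,dx\in V(\Omega)$ and $c$ constant; then $B(u,v)=B(u,v_0)=L(v_0)$, while $L(v)=L(v_0)+c\big(\int_\Omega f\,dx+\int_\Gamma\varphi\,d\mu\big)=L(v_0)$ by \eqref{E:compatible}, so $u$ is a weak solution, and uniqueness in $V(\Omega)$ follows by testing the difference of two such solutions against itself.

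The only step I expect to require real work is the Poincar\'e inequality with trace term underlying coercivity in the second case: because the proposition allows $\Omega$ to be merely a bounded $W^{1,2}$-extension domain (not necessarily an $(\varepsilon,\infty)$-domain), Corollary \ref{C:equivnorms} cannot be quoted directly and the inequality must be re-established for the fixed triple $(\Omega,\Gamma,\mu)$ as sketched above; everything else is routine Lax--Milgram bookkeeping.
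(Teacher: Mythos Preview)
Your proof is correct and follows the same Lax--Milgram route as the paper. The only notable difference is in the case $\alpha=0$ with $\gamma$ bounded away from zero: the paper obtains coercivity by quoting Corollary~\ref{C:equivnorms} (applied with $D$ a large ball), whereas you observe that this corollary is stated only for $(\varepsilon,\infty)$-domains while the proposition allows an arbitrary bounded $W^{1,2}$-extension domain, and you therefore re-derive the Poincar\'e inequality with trace term for the fixed triple $(\Omega,\Gamma,\mu)$ via a direct contradiction/compactness argument. Your observation is legitimate and your self-contained argument is correct; the ingredients you use (Rellich--Kondrachov for a bounded extension domain via a surrounding ball, weak-to-weak continuity of the trace, connectedness of $\Omega$) are exactly what one needs to justify the paper's claim in the stated generality, and of course no uniform constant is required here. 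Your treatment of the Neumann case is also slightly more explicit than the paper's, in that you spell out how the compatibility condition~\eqref{E:compatible} upgrades the identity from test functions in $V(\Omega)$ to all of $W^{1,2}(\Omega)$.
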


\begin{proof}
For $\alpha>0$ one can again apply Lax-Milgram in $W^{1,2}(\Omega)$ directly. For $\alpha\equiv 0$, $\Omega$ bounded and $\gamma$ bounded away from zero Corollary \ref{C:equivnorms}, applied with $D$ being a large enough ball, ensures the coercivity of $u\mapsto \int_\Omega|\nabla u|^2 dx+\int_\Gamma \gamma (\mathrm{Tr}_{\Omega,\Gamma}u)^2 d\mu$. See for instance \cite[Section 22.2g]{ZEIDLER}.
For $\alpha\equiv 0$, $\Omega$ bounded and $\gamma\equiv 0$ we can again find a large ball $D$ containing $\Omega$, and Rellich-Kondrachov for $D$ implies Rellich-Kondrachov for $\Omega$ and therefore a Poincar\'e inequality for $\Omega$ by the standard argument, see \cite[Proposition 7.1]{Egert2015} or \cite[Section 5.8, Theorem 1]{EVANS-1994}. In particular, we have $\left\|u\right\|_{L^2(\Omega)}\leq c\:\left\|\nabla u\right\|_{L^2(\Omega,\mathbb{R}^n)}$ for all $u$ from $V(\Omega)$. Using Lax-Milgram with $V(\Omega)$ one arrives at the desired result, see for instance \cite[Section 1.4]{GIRAULT-1986} or \cite[Section 22.2f]{ZEIDLER}.
\end{proof}

Given $\alpha\geq 0$ and nonnegative $\gamma\in L^\infty(\Gamma,\mu)$ we consider the quadratic form 
\begin{equation}\label{E:quadraticform}
\mathcal{E}_{\alpha,\gamma}(u)=\int_\Omega |\nabla u|^2 dx+\alpha\int_\Omega u^2 dx+\int_\Gamma \gamma (\mathrm{Tr}_{\Omega,\Gamma}u)^2d\mu,\quad u\in W^{1,2}(\Omega).
\end{equation}  

The following is a special case of a well-known fact, see for instance \cite[Corollary 1.2]{GIRAULT-1986} or \cite[Section 22.2]{ZEIDLER}.

\begin{proposition}\label{P:weaksol}
Let $\Omega$ be a $W^{1,2}$-extension domain. Let $0\vee(n-2)<d\leq n$ and suppose that $\mu$ is a nonzero finite Borel measure satisfying \eqref{E:upperdreg} whose support $\Gamma=\supp\mu$ is a subset $\overline{\Omega}$. Suppose that $\alpha\geq 0$, $\gamma\in L^\infty(\Gamma,\mu)$ is nonnegative, $f\in L^2(\Omega)$ and $\varphi \in \mathrm{Tr}_\Gamma(W^{1,2}(\mathbb{R}^n))$. 
\begin{enumerate}
\item[(i)] If $\alpha>0$ or $\Omega$ is bounded, then $u\in V(\Omega,\Gamma)$ is a weak solution to (\ref{E:abstractDirichlet}) with $\varphi\equiv 0$ if and only if it minimizes the functional $v\mapsto \frac12 \mathcal{E}_{\alpha,0}(v)-\int_\Omega fv\:dx$ on $V(\Omega, \Gamma)$.
\item[(ii)] Suppose that $\gamma$ is bounded away from zero. If $\alpha>0$ or $\Omega$ is bounded, then $u\in W^{1,2}(\Omega)$ is a weak solution to (\ref{E:abstractRobin}) if and only if it minimizes the functional $v\mapsto \frac12 \mathcal{E}_{\alpha,\gamma}(v)-\int_\Omega fv\:dx-\int_\Gamma \varphi \mathrm{Tr}_{\Omega,\Gamma}v\:d\mu$ on $W^{1,2}(\Omega)$.
\item[(iii)] Suppose that $\gamma\equiv 0$. If $\alpha>0$, then $u\in W^{1,2}(\Omega)$ is a weak solution to (\ref{E:abstractRobin}) if and only if it minimizes the functional $v\mapsto \frac12 \mathcal{E}_{\alpha,0}(v)-\int_\Omega fv\:dx-\int_\Gamma \varphi \mathrm{Tr}_{\Omega,\Gamma}v\:d\mu$ on $W^{1,2}(\Omega)$. If $\alpha=0$, $\Omega$ is bounded and (\ref{E:compatible}) holds, then a member $u$ of the space $V(\Omega)$ is a weak solution to (\ref{E:abstractRobin}) if and only if it minimizes the same functional on $V(\Omega)$.
\end{enumerate}
\end{proposition}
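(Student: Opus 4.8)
The plan is to invoke the classical Dirichlet principle: on a real Hilbert space $H$, a symmetric, bounded, coercive bilinear form $a$ together with a bounded linear functional $L\in H'$ produces, by Lax--Milgram, a unique $u\in H$ with $a(u,v)=L(v)$ for all $v\in H$, and this $u$ is simultaneously the unique minimizer over $H$ of $J(v):=\frac12 a(v,v)-L(v)$. The only computation needed is the identity
\[ J(u+w)=J(u)+\big(a(u,w)-L(w)\big)+\tfrac12 a(w,w),\qquad w\in H, \]
valid because $a$ is symmetric: if $u$ is a weak solution the middle term vanishes for every $w$ and the last is $\geq 0$ (strictly positive for $w\neq 0$ by coercivity), so $u$ minimizes $J$ uniquely; conversely, if $u$ minimizes $J$ then $t\mapsto J(u+tw)$ has vanishing derivative at $t=0$ for each $w$, which is precisely $a(u,w)=L(w)$.

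The bulk of the argument is then a dictionary translating each of the three items into this abstract setting. In case (i) take $H=V(\Omega,\Gamma)$ from \eqref{E:VOmegaGamma}, $a(u,v)=\int_\Omega\nabla u\,\nabla v\,dx+\alpha\int_\Omega uv\,dx$ and $L(v)=\int_\Omega fv\,dx$; in cases (ii) and (iii) take $H=W^{1,2}(\Omega)$, $a(u,v)=\int_\Omega\nabla u\,\nabla v\,dx+\alpha\int_\Omega uv\,dx+\int_\Gamma\gamma\,\mathrm{Tr}_{\Omega,\Gamma}u\,\mathrm{Tr}_{\Omega,\Gamma}v\,d\mu$ and $L(v)=\int_\Omega fv\,dx+\int_\Gamma\varphi\,\mathrm{Tr}_{\Omega,\Gamma}v\,d\mu$. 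In every case $a(v,v)=\mathcal{E}_{\alpha,\gamma}(v)$ by \eqref{E:quadraticform} (with $\gamma\equiv 0$ where appropriate), so $J$ is exactly the functional in the statement, and $a(u,v)=L(v)$ for all $v\in H$ is exactly the weak-solution identity \eqref{E:weaksolD}, resp. \eqref{E:weaksolR}. Boundedness of the trace term of $a$ and of the trace part of $L$ on $W^{1,2}(\Omega)$ follows from Corollary \ref{C:tracesimple}; coercivity is clear for $\alpha>0$, follows from Corollary \ref{C:equivnorms} when $\alpha=0$, $\Omega$ is bounded and $\gamma$ is bounded away from zero, and follows from the Poincar\'e inequalities for $V(\Omega,\Gamma)$ and for mean-zero functions recorded in the proofs of Propositions \ref{P:exDirichlet} and \ref{P:exRobin} in the remaining cases. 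With coercivity in hand the abstract principle gives the asserted equivalences, and existence and uniqueness of the weak solution are already provided by those propositions.

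The one point requiring a little care is case (iii) with $\alpha=0$, $\gamma\equiv 0$ and $\Omega$ bounded, where $H$ must be the Hilbert space $V(\Omega)=\{u\in W^{1,2}(\Omega):\int_\Omega u\,dx=0\}$ normed by $\|\nabla\cdot\|_{L^2(\Omega,\mathbb{R}^n)}$. Here I would first observe that testing \eqref{E:weaksolR} against the constant $v\equiv 1$ forces \eqref{E:compatible}; conversely, under \eqref{E:compatible} the functional $L(v)=\int_\Omega fv\,dx+\int_\Gamma\varphi\,\mathrm{Tr}_{\Omega,\Gamma}v\,d\mu$ annihilates constants (since $\mathrm{Tr}_{\Omega,\Gamma}$ sends the constant $c$ to $c$), while $a(u,v)=\int_\Omega\nabla u\,\nabla v\,dx$ only sees gradients, so for every $v\in W^{1,2}(\Omega)$ both sides of \eqref{E:weaksolR} are unchanged upon replacing $v$ by its mean-zero part. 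Hence a $u\in V(\Omega)$ is a weak solution to \eqref{E:abstractRobin} if and only if $a(u,v)=L(v)$ for all $v\in V(\Omega)$, and the abstract principle applied on $V(\Omega)$ — whose Hilbert-space structure and the required Poincar\'e inequality for mean-zero functions are exactly what was used in the proof of Proposition \ref{P:exRobin} — yields the claim. I do not anticipate any genuine obstacle beyond this bookkeeping; the essential content is the symmetry of the forms and the elementary quadratic identity above.
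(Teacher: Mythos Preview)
Your proposal is correct and follows exactly the standard variational argument that the paper has in mind; the paper itself does not write out a proof but merely cites \cite[Corollary 1.2]{GIRAULT-1986} and \cite[Section 22.2]{ZEIDLER} for this well-known fact. Your explicit handling of the $\alpha=0$, $\gamma\equiv 0$ case via $V(\Omega)$ and the compatibility condition \eqref{E:compatible} is precisely the bookkeeping those references carry out.
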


Partially following \cite[Section VIII.6]{REED-SIMON-1980} we refer to a closed, densely defined and nonnegative definite symmetric bilinear form on a Hilbert space as a a \emph{closed quadratic form}. 

\begin{corollary}\label{C:closed}
Under the respective hypotheses on $\Omega$, $\mu$, $\alpha$ and $\gamma$ formulated in Proposition \ref{P:weaksol} (i), (ii) and (iii), respectively, the forms 
\[(\mathcal{E}_{\alpha,0}, V(\Omega,\Gamma)),\quad (\mathcal{E}_{\alpha,\gamma},W^{1,2}(\Omega))\quad \text{and}\quad (\mathcal{E}_{\alpha,0}, W^{1,2}(\Omega))\] 
are closed quadratic forms on $L^2(\Omega)$. 
\end{corollary}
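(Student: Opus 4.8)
The plan is to verify the three defining properties of a closed quadratic form — symmetric bilinear, densely defined, nonnegative definite, and closed — for each of the three cases, treating them uniformly and pointing out where the hypotheses enter. Symmetry and bilinearity are immediate: each $\mathcal{E}_{\alpha,\gamma}$ arises by polarization from the quadratic expression \eqref{E:quadraticform}, whose three summands $\int_\Omega \nabla u\cdot\nabla v\,dx$, $\alpha\int_\Omega uv\,dx$ and $\int_\Gamma \gamma\,\mathrm{Tr}_{\Omega,\Gamma}u\,\mathrm{Tr}_{\Omega,\Gamma}v\,d\mu$ are each symmetric bilinear forms, the last one well-defined and finite because $\gamma\in L^\infty(\Gamma,\mu)$ is nonnegative and $\mathrm{Tr}_{\Omega,\Gamma}$ maps into $L^2(\Gamma,\mu)$ by Corollary \ref{C:tracesimple}. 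Nonnegative definiteness is clear since each summand is $\geq 0$ for $u=v$.

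Dense definedness requires only that the form domain be a dense subspace of $L^2(\Omega)$. For $(\mathcal{E}_{\alpha,\gamma},W^{1,2}(\Omega))$ and $(\mathcal{E}_{\alpha,0},W^{1,2}(\Omega))$ this is standard, since $C_c^\infty(\mathbb{R}^n)|_\Omega\subset W^{1,2}(\Omega)$ is dense in $L^2(\Omega)$. For $(\mathcal{E}_{\alpha,0},V(\Omega,\Gamma))$ one notes that $V(\Omega,\Gamma)$ contains $\mathring{W}^{1,2}(\Omega)$, which already contains $C_c^\infty(\Omega)$, dense in $L^2(\Omega)$; more precisely $V(\Omega,\Gamma)$ is the image under $\mathrm{Tr}_\Omega$ of $\{v\in W^{1,2}(\mathbb{R}^n):\widetilde{v}=0\ W^{1,2}(\mathbb{R}^n)\text{-q.e. on }\Gamma\}=\mathring{H}^{1,2}(\mathbb{R}^n\setminus\Gamma)$, and $C_c^\infty(\mathbb{R}^n\setminus\Gamma)$ is dense there; restricting to $\Omega$ gives a dense set in $L^2(\Omega)$ since $\lambda^n(\Gamma)$ may be positive but $C_c^\infty(\Omega\setminus\Gamma)$-functions still exhaust $L^2(\Omega)$ when $\Omega\setminus\Gamma$ has full measure, which holds as $\Omega$ is open and $\Gamma$ closed with $\Omega\not\subset\Gamma$.

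The substantive point is closedness, i.e. completeness of the form domain under the form norm $u\mapsto(\mathcal{E}_{\alpha,\gamma}(u)+\|u\|_{L^2(\Omega)}^2)^{1/2}$. The strategy is to show this form norm is equivalent to the ambient Hilbert norm $\|\cdot\|_{W^{1,2}(\Omega)}$ on the form domain, so that completeness is inherited. One direction, $\mathcal{E}_{\alpha,\gamma}(u)+\|u\|_{L^2(\Omega)}^2\leq C\|u\|_{W^{1,2}(\Omega)}^2$, follows from $\|\gamma\|_{L^\infty(\Gamma,\mu)}<\infty$ together with the boundedness of $\mathrm{Tr}_{\Omega,\Gamma}:W^{1,2}(\Omega)\to L^2(\Gamma,\mu)$ from Corollary \ref{C:tracesimple}. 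For the reverse inequality one splits into the cases of Proposition \ref{P:weaksol}: when $\alpha>0$ the term $\alpha\int_\Omega u^2\,dx$ directly dominates $\|u\|_{L^2(\Omega)}^2$ and $\int_\Omega|\nabla u|^2\,dx$ supplies the gradient part; when $\alpha=0$ and $\Omega$ is bounded one invokes the Poincaré inequality — for $V(\Omega,\Gamma)$ the Poincaré inequality for functions with zero trace on $\Gamma$, for $W^{1,2}(\Omega)$ with $\gamma$ bounded away from zero Corollary \ref{C:equivnorms} applied with $D$ a large ball containing $\Omega$, and for $\gamma\equiv 0$ on $V(\Omega)$ the standard Poincaré inequality on $\Omega$ obtained via Rellich--Kondrachov for the enclosing ball — to bound $\|u\|_{L^2(\Omega)}^2$ by a constant times $\int_\Omega|\nabla u|^2\,dx\leq\mathcal{E}_{\alpha,\gamma}(u)$. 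Since $W^{1,2}(\Omega)$ (respectively its closed subspaces $V(\Omega,\Gamma)$, $V(\Omega)$) is complete and these subspaces are closed, norm equivalence gives completeness in the form norm, hence closedness. The main obstacle is simply organizing the three coercivity arguments cleanly and citing the correct Poincaré-type input in each case; no genuinely new estimate is needed beyond what Corollaries \ref{C:tracesimple} and \ref{C:equivnorms} and Proposition \ref{P:weaksol} already provide.
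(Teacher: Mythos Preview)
The paper gives no proof for this corollary, treating it as immediate; your argument via equivalence of the form norm $(\mathcal{E}_{\alpha,\gamma}(\cdot)+\|\cdot\|_{L^2(\Omega)}^2)^{1/2}$ with $\|\cdot\|_{W^{1,2}(\Omega)}$ on the respective form domain is the standard verification, and the closedness part is correctly carried out.

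There is, however, a genuine gap in your density argument for $V(\Omega,\Gamma)$. The claim that $\Omega\setminus\Gamma$ has full Lebesgue measure in $\Omega$ because ``$\Omega$ is open and $\Gamma$ closed with $\Omega\not\subset\Gamma$'' is false: nothing in the hypotheses---$\mu$ nonzero, finite, upper $d$-regular with $n-2<d\leq n$ and $\Gamma=\supp\mu\subset\overline{\Omega}$---forces $\lambda^n(\Gamma\cap\Omega)=0$. For instance, with $d=n$ one may take $\mu=\lambda^n|_K$ for a compact $K\subset\Omega$ of positive measure. In that case every $v\in V(\Omega,\Gamma)$ satisfies $(E_\Omega v)^\sim=0$ q.e.\ on $\Gamma$, hence $v=0$ $\lambda^n$-a.e.\ on $\Gamma\cap\Omega$, so $V(\Omega,\Gamma)$ lies in the proper closed subspace $\{v\in L^2(\Omega):v=0\text{ a.e.\ on }\Gamma\cap\Omega\}$ and is \emph{not} dense in $L^2(\Omega)$. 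The paper's actual applications (Sections~\ref{S:Mosco}--\ref{S:opti}) all have $\Gamma$ of boundary type with $\lambda^n(\Gamma\cap\Omega)=0$, where your argument via $C_c^\infty(\Omega\setminus\Gamma)$ goes through; but as a proof of the corollary in the stated generality, the density step for case~(i) does not close, and you should flag the needed hypothesis $\lambda^n(\Gamma\cap\Omega)=0$.

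A minor point: in case~(iii) with $\alpha=0$, $\gamma\equiv 0$, the form domain in the corollary is $W^{1,2}(\Omega)$, not $V(\Omega)$, and then $\mathcal{E}_{0,0}(u)+\|u\|_{L^2(\Omega)}^2=\|u\|_{W^{1,2}(\Omega)}^2$ identically---closedness is immediate and no Poincar\'e input is needed. Your appeal to the Poincar\'e inequality on $V(\Omega)$ imports the coercivity argument from Proposition~\ref{P:exRobin}, which is about existence of solutions, not about closedness of the form.
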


Let $D\subset \mathbb{R}^n$ be a domain. Given a subdomain $\Omega\subset D$ we may view $L^2(\Omega)$ as a closed subspace of $L^2(D)$ by continuation by zero. Then 
\[P_\Omega f:=\mathbf{1}_\Omega f\] 
is the orthogonal projection from $L^2(D)$ onto $L^2(\Omega)$. We denote the spaces of all $f\in L^2(D)$ such that $P_\Omega f\in W^{1,2}(\Omega)$ respectively $P_\Omega f\in V(\Omega,\Gamma)$ by $W^{1,2}(\Omega)_D$ respectively $V(\Omega,\Gamma)_D$. If it is a priori clear that we are talking about an element $f$ of $L^2(D)$, then we also just write $W^{1,2}(\Omega)$ respectively $V(\Omega,\Gamma)$; this is in line with our general policy to understand the notations $v\in W^{1,2}(\Omega)$ and $\mathcal{E}(v)$ as $v|_\Omega\in W^{1,2}(\Omega)$ and $\mathcal{E}(v|_\Omega)$ if $\mathcal{E}$ is defined on $W^{1,2}(\Omega)$ and $v$ is defined on a larger set containing $\Omega$. 

To point out the dependency of the quadratic forms defined in (\ref{E:quadraticform}) on the domain $\Omega\subset D$ and the measure $\mu$, we also write 
\[\mathcal{E}_{\alpha,\gamma}^{\Omega,\mu}:=\mathcal{E}_{\alpha,\gamma}.\]
It is straightforward to see that Corollary \ref{C:closed} implies the following.

\begin{corollary}\label{C:closedonD}
Let $D\subset \mathbb{R}^n$ be a domain and $\Omega\subset D$ a $W^{1,2}$-extension domain. Under the respective hypotheses on $\Omega$, $\mu$, $\alpha$ and $\gamma$ formulated in Proposition \ref{P:weaksol} (i), (ii) and (iii), the forms 
\[(\mathcal{E}_{\alpha,0}^{\Omega,\mu}\circ P_\Omega,V(\Omega,\Gamma)_D),\quad (\mathcal{E}_{\alpha,\gamma}^{\Omega,\mu}\circ P_\Omega,W^{1,2}(\Omega)_D)\quad \text{ and }\quad (\mathcal{E}_{\alpha,0}^{\Omega,\mu}\circ P_\Omega, W^{1,2}(\Omega)_D)\] 
are closed quadratic forms on $L^2(D)$. 
\end{corollary}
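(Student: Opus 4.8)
The plan is to deduce everything from Corollary \ref{C:closed} by means of an elementary observation about pulling back a closed quadratic form along an orthogonal projection. Fix one of the three cases, i.e. let $(\mathcal{E},\mathcal{F})$ denote one of $(\mathcal{E}_{\alpha,0},V(\Omega,\Gamma))$, $(\mathcal{E}_{\alpha,\gamma},W^{1,2}(\Omega))$, $(\mathcal{E}_{\alpha,0},W^{1,2}(\Omega))$ under the respective hypotheses of Proposition \ref{P:weaksol} (i), (ii), (iii); by Corollary \ref{C:closed} it is a closed quadratic form on $L^2(\Omega)$. Regard $L^2(\Omega)$ as the closed subspace of $L^2(D)$ obtained by continuation by zero, so that $L^2(D)=L^2(\Omega)\oplus L^2(D\setminus\Omega)$ orthogonally and $P_\Omega f=\mathbf{1}_\Omega f$ is the corresponding orthogonal projection. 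The claim I would prove is that the pulled-back form $\widehat{\mathcal{E}}:=\mathcal{E}\circ P_\Omega$ with domain $\widehat{\mathcal{F}}:=\{f\in L^2(D):P_\Omega f\in\mathcal{F}\}$ is again a closed quadratic form on $L^2(D)$; applying it in the three cases gives exactly the three forms in the statement, whose domains are $V(\Omega,\Gamma)_D$, $W^{1,2}(\Omega)_D$ and $W^{1,2}(\Omega)_D$ by definition.

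Symmetry and nonnegative definiteness of $\widehat{\mathcal{E}}$ (understood via polarization of the quadratic form) are immediate from the corresponding properties of $\mathcal{E}$ and linearity of $P_\Omega$. For density of $\widehat{\mathcal{F}}$ in $L^2(D)$ I would argue as follows: since $\mathbf{1}_\Omega f=0$ for every $f\in L^2(D\setminus\Omega)$, the domain $\widehat{\mathcal{F}}$ contains all of $L^2(D\setminus\Omega)$, and it also contains the continuation by zero of every element of $\mathcal{F}$. As $\mathcal{F}$ is dense in $L^2(\Omega)$ by Corollary \ref{C:closed}, the algebraic sum $\mathcal{F}+L^2(D\setminus\Omega)\subset\widehat{\mathcal{F}}$ is dense in $L^2(\Omega)\oplus L^2(D\setminus\Omega)=L^2(D)$.

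For closedness I would work with the form norm $\|f\|_{\widehat{\mathcal{E}},1}^2:=\widehat{\mathcal{E}}(f)+\|f\|_{L^2(D)}^2=\big(\mathcal{E}(P_\Omega f)+\|P_\Omega f\|_{L^2(\Omega)}^2\big)+\|f-P_\Omega f\|_{L^2(D\setminus\Omega)}^2$. If $(f_k)_k\subset\widehat{\mathcal{F}}$ is Cauchy in this norm, then on one hand $(P_\Omega f_k)_k$ is Cauchy in the form norm of $(\mathcal{E},\mathcal{F})$ and hence converges to some $w\in\mathcal{F}$ in that norm, because $(\mathcal{E},\mathcal{F})$ is closed; on the other hand $(f_k-P_\Omega f_k)_k$ is Cauchy, hence convergent, in $L^2(D\setminus\Omega)$, say to $z$. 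Putting $f:=w+z$ one has $P_\Omega f=w\in\mathcal{F}$, so $f\in\widehat{\mathcal{F}}$, and $\|f_k-f\|_{\widehat{\mathcal{E}},1}^2=\big(\mathcal{E}(P_\Omega f_k-w)+\|P_\Omega f_k-w\|_{L^2(\Omega)}^2\big)+\|(f_k-P_\Omega f_k)-z\|_{L^2(D\setminus\Omega)}^2\to 0$, as required.

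I expect no serious obstacle: the content is entirely formal once Corollary \ref{C:closed} is available. The only point deserving a moment's care is the density of $\widehat{\mathcal{F}}$ in $L^2(D)$, where one must not overlook that the constraint $P_\Omega f\in\mathcal{F}$ imposes nothing on the component of $f$ in $L^2(D\setminus\Omega)$, so that the dense subspace $\mathcal{F}$ of $L^2(\Omega)$ is effectively "completed" by the entire orthogonal complement $L^2(D\setminus\Omega)$.
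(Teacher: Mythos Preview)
Your argument is correct and is exactly the elementary verification the paper has in mind: the paper merely asserts that Corollary~\ref{C:closed} implies Corollary~\ref{C:closedonD} without spelling out details, and in the proof of Proposition~\ref{P:consistent} it remarks in passing that pulling back a closed quadratic form along an orthogonal projection again yields a closed quadratic form, calling this ``straightforward''. You have simply written out that straightforward step.
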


We absorb the projection operators into the notation and set, under the respective hypotheses stated in Corollary \ref{C:closedonD},
\begin{equation}\label{E:dependondomain}
\hat{\mathcal{E}}_{\alpha,\gamma}^{\Omega,\mu}:=\mathcal{E}_{\alpha,\gamma}\circ P_\Omega.
\end{equation}

For any $\alpha>0$ we can define a resolvent operator $\hat{G}_{\alpha,0}^{\Omega,\mu,D}:L^2(D)\to V(\Omega,\Gamma)_D$  by the identity
\begin{equation}\label{E:GreenD}
\hat{\mathcal{E}}_{\alpha,0}^{\Omega,\mu}(\hat{G}_{\alpha,0}^{\Omega,\mu,D}f,v)=\int_D f\:v\:dx,\quad f\in L^2(D),\ v\in V(\Omega,\Gamma)_D;
\end{equation}
if $\Omega$ is bounded, then also $\alpha=0$ may be chosen. If $\gamma$ is bounded away from zero then for any $\alpha>0$ we can define a resolvent operator $\hat{G}_{\alpha,\gamma}^{\Omega,\mu,R}:L^2(D)\to W^{1,2}(\Omega)_D$ by the identity
\begin{equation}\label{E:GreenR}
\hat{\mathcal{E}}_{\alpha,\gamma}^{\Omega,\mu}(\hat{G}_{\alpha,\gamma}^{\Omega,\mu,R}f,v)=\int_D f\:v\:dx,\quad f\in L^2(D),\ v\in W^{1,2}(\Omega)_D;
\end{equation}
again we can permit $\alpha=0$ if $\Omega$ is bounded. If $\gamma\equiv 0$, then for any $\alpha>0$ we can define a resolvent operator $\hat{G}_{\alpha,0}^{\Omega,\mu,N}:L^2(D)\to W^{1,2}(\Omega)_D$ by the identity
\begin{equation}\label{E:GreenN}
\hat{\mathcal{E}}_{\alpha,0}^{\Omega,\mu}(\hat{G}_{\alpha,0}^{\Omega,\mu,N}f,v)=\int_D f\:v\:dx,\quad f\in L^2(D),\ v\in W^{1,2}(\Omega)_D.
\end{equation}

In the following we will also use the shortcut notation $\hat{G}_{\alpha,\gamma}^{\Omega,\mu,\ast}$, where $\ast$ stands for $D$, $R$ or $N$, the meaning will be clear from the context.

To clarify the relationship of the operators $\hat{G}_{\alpha,\gamma}^{\Omega,\mu,\ast}$ and the original problems (\ref{E:abstractDirichlet}) and (\ref{E:abstractRobin}) on the domain $\Omega$, let $G_{\alpha,\gamma}^{\Omega,\mu,\ast}$ denote the resolvent operator with parameter $\alpha> 0$ uniquely associated with $\mathcal{E}_{\alpha,\gamma}^{\Omega,\mu}$ on $L^2(\Omega)$.

\begin{proposition}\label{P:consistent}
Let $D\subset \mathbb{R}^n$ be a domain and $\Omega\subset D$ a $W^{1,2}$-extension domain. Suppose that $0\vee(n-2)<d\leq n$ and that $\mu$ is a nonzero finite Borel measure satisfying \eqref{E:upperdreg} whose support $\Gamma=\supp\mu$ is a subset $\overline{\Omega}$. Let $\alpha$ and $\gamma$ be as specified in (\ref{E:GreenD}), (\ref{E:GreenR}) or (\ref{E:GreenN}). 

Then we have 
\begin{equation}\label{E:flip}
P_\Omega\circ \hat{G}_{\alpha,\gamma}^{\Omega,\mu,\ast} = G_{\alpha,\gamma}^{\Omega,\mu,\ast}\circ P_\Omega.
\end{equation}

In particular, if $f\in L^2(D)$, then 
\[P_\Omega\circ \hat{G}_{\alpha,\gamma}^{\Omega,\mu,D} f,\quad  P_\Omega\circ \hat{G}_{\alpha,\gamma}^{\Omega,\mu,R}f\quad \text{and}\quad P_{\Omega}\circ \hat{G}_{\alpha,\gamma}^{\Omega,\mu,N}f\]
are the continuations by zero of the unique weak solutions for the case of zero boundary data $\varphi\equiv 0$ to (\ref{E:abstractDirichlet}), to (\ref{E:abstractRobin}) with $\gamma$ bounded away from zero and  to (\ref{E:abstractRobin}) with $\gamma\equiv 0$ and $\alpha>0$, respectively. 
\end{proposition}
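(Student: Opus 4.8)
The plan is to establish the identity \eqref{E:flip} directly from the two variational characterizations, exploiting that the form $\hat{\mathcal{E}}_{\alpha,\gamma}^{\Omega,\mu}$ is by definition \eqref{E:dependondomain} just $\mathcal{E}_{\alpha,\gamma}^{\Omega,\mu}$ precomposed with the projection $P_\Omega$, and that the form domain on $L^2(D)$ is the preimage under $P_\Omega$ of the corresponding form domain on $L^2(\Omega)$. First I would fix $f\in L^2(D)$ and write $u:=\hat{G}_{\alpha,\gamma}^{\Omega,\mu,\ast}f$, so that $u$ lies in the relevant $L^2(D)$-form domain and satisfies, for all $v$ in that domain,
\[
\mathcal{E}_{\alpha,\gamma}^{\Omega,\mu}(P_\Omega u, P_\Omega v)=\hat{\mathcal{E}}_{\alpha,\gamma}^{\Omega,\mu}(u,v)=\int_D f\,v\,dx.
\]
The key observation is that for a test element of the form $v=w$ with $w\in L^2(D)$ supported off $\Omega$ one gets $\mathcal{E}^{\Omega,\mu}_{\alpha,\gamma}(P_\Omega u,0)=0=\int_D f w\,dx$ only if $f$ itself has the right support, which in general it does not; so instead one restricts attention to test functions $v$ with $P_\Omega v\in W^{1,2}(\Omega)$ (resp.\ $V(\Omega,\Gamma)$) and $v=0$ on $D\setminus\Omega$, i.e.\ to $v=\tilde v$ where $\tilde v$ is the continuation by zero of an arbitrary element $v_0$ of the $L^2(\Omega)$-form domain. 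For such $v$ one has $P_\Omega v=v_0$, $P_\Omega v= v$, and $\int_D f v\,dx=\int_\Omega (P_\Omega f)v_0\,dx$, so the displayed identity becomes
\[
\mathcal{E}_{\alpha,\gamma}^{\Omega,\mu}(P_\Omega u, v_0)=\int_\Omega (P_\Omega f)\,v_0\,dx,\qquad v_0\in V(\Omega,\Gamma)\ \text{(resp.\ }W^{1,2}(\Omega)\text{)},
\]
which is exactly the defining relation of $G_{\alpha,\gamma}^{\Omega,\mu,\ast}(P_\Omega f)$ on $L^2(\Omega)$. By the uniqueness built into the Lax–Milgram / closed-form construction (Corollary \ref{C:closed}, together with the coercivity coming from $\alpha>0$ or, for $\alpha=0$, from the Poincaré inequalities used in Proposition \ref{P:exDirichlet} and Proposition \ref{P:exRobin}), this forces $P_\Omega u = G_{\alpha,\gamma}^{\Omega,\mu,\ast}(P_\Omega f)$, which is \eqref{E:flip}. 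The remaining claim is then immediate: Proposition \ref{P:consistent}'s "in particular" part is just the reformulation of Proposition \ref{P:exDirichlet}, Proposition \ref{P:exRobin} and Proposition \ref{P:weaksol} saying that $G_{\alpha,\gamma}^{\Omega,\mu,\ast}(P_\Omega f)$, as an element of the appropriate $L^2(\Omega)$-form domain, is the unique weak solution with zero boundary datum, and $P_\Omega\circ\hat G^{\Omega,\mu,\ast}_{\alpha,\gamma}f$ is precisely its continuation by zero.

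The step I expect to require the most care is verifying that the continuation by zero $\tilde v$ of a form-domain element $v_0$ on $\Omega$ is an admissible test function in the $L^2(D)$-variational problem, i.e.\ that $\tilde v$ lies in $W^{1,2}(\Omega)_D$ (resp.\ $V(\Omega,\Gamma)_D$) in the sense of the definitions preceding \eqref{E:dependondomain}. By construction of those spaces as $\{f\in L^2(D): P_\Omega f\in W^{1,2}(\Omega)\text{ (resp.\ }V(\Omega,\Gamma))\}$, this is tautological once one checks $P_\Omega\tilde v= v_0$, which holds because $\tilde v$ already vanishes on $D\setminus\Omega$; so in fact there is no regularity issue at the interface at all — the projection formalism is designed precisely to avoid it — and the only genuinely substantive input is the uniqueness of the resolvent, which is what pins down the two sides of \eqref{E:flip}. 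One should also record that the composition $P_\Omega\circ\hat G^{\Omega,\mu,\ast}_{\alpha,\gamma}$ maps into the stated form domain, which is clear since $\hat G^{\Omega,\mu,\ast}_{\alpha,\gamma}$ maps $L^2(D)$ into it and $P_\Omega$ acts as the identity there.
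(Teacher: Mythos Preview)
Your proof is correct and follows essentially the same route as the paper's. The paper abstracts the situation to a general Hilbert space $H$, closed subspace $K$, projection $P$, and closed form $Q$ on $K$ with $\hat Q:=Q\circ P$ on $H$, then computes $Q(P\hat G f,Pg)=\hat Q(\hat G f,Pg)=\langle f,Pg\rangle_H=\langle Pf,Pg\rangle_K=Q(G(Pf),Pg)$ and concludes by uniqueness; your argument is the same computation carried out concretely, with your test function $\tilde v$ playing the role of the paper's $Pg$ (which, as you correctly note, is automatically in the large form domain by the very definition of $W^{1,2}(\Omega)_D$). The digression about test functions supported off $\Omega$ is unnecessary but harmless.
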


\begin{proof} 
Let $H$ be a Hilbert space, $K\subset H$ a closed subspace and $P$ the orthogonal projection onto $K$. Suppose that $(Q,\mathcal{D}(Q))$ is a closed quadratic form on $K$ and and $G_\alpha$ the associated $\alpha$-resolvent operator, defined by the identity $Q_\alpha(G_\alpha f,g)=\left\langle f,g\right\rangle_K$, $f\in K$, $g\in \mathcal{D}(Q)$. Let $(\hat{Q},\mathcal{D}(\hat{Q}))$ be defined by $\mathcal{D}(\hat{Q}):=\left\lbrace f\in H: Pf\in \mathcal{D}(Q)\right\rbrace$ and $\hat{Q}:=Q\circ P$. It is straightforward to see that $(\hat{Q},\mathcal{D}(\hat{Q}))$ is a closed quadratic form on $H$. Let $\hat{G}_\alpha$ be its
$\alpha$-resolvent operator, that is, $\hat{Q}_\alpha(\hat{G}_\alpha f,g)=\left\langle f,g\right\rangle_H$, $f\in H$, $g\in \mathcal{D}(\hat{Q})$. Then 
\[Q_\alpha(P(\hat{G}_\alpha f),Pg)=\hat{Q}_\alpha(\hat{G}_\alpha f,Pg)=\left\langle f,Pg\right\rangle_H=\left\langle Pf,Pg\right\rangle_H=\left\langle Pf,Pg\right\rangle_K=Q_\alpha(G_\alpha(Pf),Pg)\]
for all $f\in H$ and $g\in \mathcal{D}(\hat{Q})$.
This implies that $P(\hat{G}_\alpha f)=G_\alpha(Pf)$ in $\mathcal{D}(Q)$ for all $f\in H$ and therefore (\ref{E:flip}). The remaining statements are a consequence.
\end{proof}

\begin{remark}\label{R:quasiinv}\mbox{}
\begin{enumerate}
\item[(i)] Let $(\mathcal{L}^{\Omega,\mu,\ast}_\gamma,\mathcal{D}(\mathcal{L}^{\Omega,\mu,\ast}_\gamma))$ be the generator of $\mathcal{E}_{0,\gamma}^{\Omega,\mu}$ with the respective boundary conditions, that is, the unique non-positive definite self-adjoint operator on $L^2(\Omega)$ satisfying 
\[\mathcal{E}_{0,\gamma}^{\Omega,\mu}(u,\psi)=-\left\langle\mathcal{L}^{\Omega,\mu,\ast}_\gamma u,\psi\right\rangle_{L^2(\Omega)}\]
for all $u\in \mathcal{D}(\mathcal{L}^{\Omega,\mu,\ast}_\gamma)$ and all $\psi$ from $V(\Omega,\Gamma)$ (if $\ast=D$), $W^{1,2}(\Omega)$ (if $\ast=R$ or $N$). The operator in (\ref{E:flip}) is the quasi-inverse of $(\alpha-\mathcal{L}^{\Omega,\mu,\ast}_\gamma)\circ P_\Omega$ in the sense of \cite{Weidmann1984}.
\item[(ii)] The operator in (\ref{E:flip}) can also be written as 
\begin{equation}\label{E:PGP}
P_\Omega\circ \hat{G}_{\alpha,\gamma}^{\Omega,\mu,\ast}\circ P_\Omega = P_\Omega\circ G_{\alpha,\gamma}^{\Omega,\mu,\ast}\circ P_\Omega.
\end{equation}
\item[(iii)] The operator $P_{\Omega,0}f:=\mathbf{1}_\Omega f-\mathbf{1}_\Omega \int_\Omega f\:dx$ is the orthogonal projection from $L^2(\Omega)$ (or $L^2(D)$) onto $L^2(\Omega)_0:=\{g\in L^2(\Omega): \int_\Omega g\:dx=0\}$. The restriction of $\mathcal{L}^{\Omega,\mu,N}_0$ to $L^2(\Omega)_0$ has a bounded inverse $\mathring{G}_{0,0}^{\Omega,\mu,N}$, and $P_{\Omega,0}\circ \mathring{G}_{0,0}^{\Omega,\mu,N}\circ P_{\Omega,0}$ is a quasi-inverse of $(-\mathcal{L}^{\Omega,\mu,N}_0)\circ P_{\Omega,0}$.
\end{enumerate}
\end{remark}

In the case of a bounded 'confinement' domain $D$ we can observe the following compactness property of quasi-inverses; the statement is similar to \cite[Lemma 4.7]{BucurGiacominiTrebeschi2016}. 

\begin{lemma}\label{L:compactnessprop}
Let $D\subset \mathbb{R}^n$ be a bounded Lipschitz domain and $\Omega\subset D$ a $W^{1,2}$-extension domain. Suppose that $0\vee(n-2)<d\leq n$ and that $\mu$ is a nonzero  Borel measure satisfying \eqref{E:upperdreg} whose support $\Gamma=\supp\mu$ is a subset $\overline{\Omega}$. Suppose further that $\gamma\equiv 0$, $\alpha\geq 0$ and $\ast=D$, or that $\gamma$ is bounded away from zero, $\alpha\geq 0$ and $\ast=R$, or that $\gamma\equiv 0$, $\alpha>0$ and $\ast=N$.

If $(g_m)_{m=1}^\infty\subset L^2(D)$ converges weakly in $L^2(D)$ to some $g$, then there is a sequence $(m_k)_{k=1}^\infty$ with $m_k\uparrow +\infty$ such that 
\begin{equation}\label{E:resconvsingle}
\lim_{k\to \infty} P_\Omega\circ \hat{G}_{\alpha,\gamma}^{\Omega,\mu,\ast}g_{m_k}=P_\Omega\circ \hat{G}_{\alpha,\gamma}^{\Omega,\mu,\ast}g\quad \text{in $L^2(D)$}.
\end{equation}
\end{lemma}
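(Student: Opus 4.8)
The plan is to exploit the uniform bounds provided by Corollary \ref{C:equivnorms} (and the Dirichlet-trace version of Poincar\'e's inequality) to show that the image sequence $(u_m)_m$, $u_m:=P_\Omega\circ \hat{G}_{\alpha,\gamma}^{\Omega,\mu,\ast}g_m$, is bounded in $W^{1,2}(\Omega)$, then pass to a weakly convergent subsequence, and finally identify the weak limit with $P_\Omega\circ \hat{G}_{\alpha,\gamma}^{\Omega,\mu,\ast}g$ using the defining identities (\ref{E:GreenD}), (\ref{E:GreenR}) or (\ref{E:GreenN}); strong $L^2(D)$-convergence of the subsequence then follows from the compact embedding $W^{1,2}(\Omega)\hookrightarrow L^2(\Omega)$, which holds uniformly for $(\varepsilon,\infty)$-domains inside the bounded Lipschitz confinement $D$ because $\Omega$ is a $W^{1,2}$-extension domain and $D$ is a bounded Lipschitz domain (Rellich--Kondrachov for $D$ implies it for $\Omega$, as already used in the proof of Proposition \ref{P:exRobin}).

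In detail, I would first fix $g_m$ and test the appropriate defining identity against $v=u_m$ itself. In the Robin case ($\ast=R$, $\gamma$ bounded away from zero, say $\gamma\geq \gamma_0>0$), this gives
\begin{equation*}
\left\|\nabla u_m\right\|_{L^2(\Omega,\mathbb{R}^n)}^2+\alpha\left\|u_m\right\|_{L^2(\Omega)}^2+\gamma_0\left\|\mathrm{Tr}_{\Omega,\Gamma}u_m\right\|_{L^2(\Gamma,\mu)}^2\leq \hat{\mathcal{E}}_{\alpha,\gamma}^{\Omega,\mu}(u_m,u_m)=\int_D g_m u_m\,dx\leq \left\|g_m\right\|_{L^2(D)}\left\|u_m\right\|_{L^2(D)};
\end{equation*}
combining the left-hand side with Corollary \ref{C:equivnorms} (applied with $D$ itself, or the given $D$) shows $\left\|u_m\right\|_{W^{1,2}(\Omega)}^2\leq c\,\left\|g_m\right\|_{L^2(D)}\left\|u_m\right\|_{L^2(D)}$, and since $\sup_m\left\|g_m\right\|_{L^2(D)}<\infty$ (weakly convergent sequences are bounded), this forces $\sup_m\left\|u_m\right\|_{W^{1,2}(\Omega)}<\infty$. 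In the Dirichlet case ($\ast=D$, $\varphi\equiv 0$, $v\in V(\Omega,\Gamma)_D$) one uses instead the Poincar\'e inequality for functions with zero trace on $\Gamma$ (valid uniformly by \cite[Theorem 10]{DRPT-2020}, or again by reduction to $D$) to absorb $\alpha=0$; in the Neumann case ($\ast=N$, $\alpha>0$) the term $\alpha\left\|u_m\right\|_{L^2(\Omega)}^2$ directly supplies coercivity. In every case we obtain a uniform $W^{1,2}(\Omega)$-bound.

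Next, using reflexivity of $W^{1,2}(\Omega)$, extract a subsequence $(m_k)_k$ along which $u_{m_k}\to u$ weakly in $W^{1,2}(\Omega)$ for some $u$ in the relevant form domain ($V(\Omega,\Gamma)_D$ or $W^{1,2}(\Omega)_D$; note the form domains are weakly closed, the Dirichlet one since it is a closed subspace), and by the uniform Rellich--Kondrachov property $u_{m_k}\to u$ strongly in $L^2(\Omega)$, hence (after continuation by zero) strongly in $L^2(D)$. To identify $u$, pass to the limit $k\to\infty$ in the defining identity $\hat{\mathcal{E}}_{\alpha,\gamma}^{\Omega,\mu}(u_{m_k},v)=\int_D g_{m_k}v\,dx$ for each fixed test function $v$ in the form domain: the right-hand side converges to $\int_D g\,v\,dx$ by weak $L^2$-convergence of $g_{m_k}$; the gradient and $L^2$ terms on the left converge by weak $W^{1,2}(\Omega)$-convergence of $u_{m_k}$; and in the Robin case the boundary term $\int_\Gamma \gamma\,\mathrm{Tr}_{\Omega,\Gamma}u_{m_k}\,\mathrm{Tr}_{\Omega,\Gamma}v\,d\mu$ converges because $\mathrm{Tr}_{\Omega,\Gamma}$ is a bounded operator that sends the weakly convergent sequence $(u_{m_k})_k$ to a weakly convergent sequence in $L^2(\Gamma,\mu)$ (here one may also invoke the trace-compactness in Corollary \ref{C:tracesimple}, or simply weak continuity of bounded linear operators). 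Thus $\hat{\mathcal{E}}_{\alpha,\gamma}^{\Omega,\mu}(u,v)=\int_D g\,v\,dx$ for all admissible $v$, so by uniqueness (Lax--Milgram, as in the proofs of Propositions \ref{P:exDirichlet} and \ref{P:exRobin}) $u=P_\Omega\circ\hat{G}_{\alpha,\gamma}^{\Omega,\mu,\ast}g$. Since every weakly convergent subsequence of $(u_m)_m$ has a further subsequence with this same strong $L^2(D)$-limit, and the limit is independent of the subsequence, (\ref{E:resconvsingle}) follows; in fact the whole sequence converges, but only a subsequence is asserted.

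The main obstacle is the uniform coercivity step: one must ensure that the Poincar\'e-type constant used to bootstrap the $W^{1,2}(\Omega)$-bound does not depend on $\Omega$ — this is exactly what Corollary \ref{C:equivnorms} (for Robin) and the uniform zero-trace Poincar\'e inequality (for Dirichlet) are designed to deliver, and the compactness of the embedding $W^{1,2}(\Omega)\hookrightarrow L^2(\Omega)$ must likewise be obtained uniformly, which is available here because $\Omega\subset D$ is a $W^{1,2}$-extension domain with $D$ bounded Lipschitz. Modulo these uniform ingredients, the argument is the standard weak-compactness-plus-identification scheme, and the only case requiring slight extra care is the pure Neumann case with $\alpha=0$ excluded from the hypotheses precisely to avoid the failure of coercivity on constants.
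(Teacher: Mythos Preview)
Your argument is correct and follows essentially the same route as the paper: test the variational identity against the solution itself to obtain a uniform $W^{1,2}(\Omega)$-bound, extend to $D$, apply Rellich--Kondrachov, and identify the limit. Two minor remarks. First, in this lemma the domain $\Omega$ is \emph{fixed}, so your concern that the Poincar\'e constant and the compact embedding be uniform in $\Omega$ is unnecessary here; any constant valid for this single $\Omega$ suffices (the uniformity matters only later, in Theorem~\ref{T:compactnesspropvar}). Second, the paper identifies the limit slightly more directly: since $\hat{G}_{\alpha,\gamma}^{\Omega,\mu,\ast}$ is bounded and self-adjoint on $L^2(D)$, weak convergence $g_m\rightharpoonup g$ immediately gives $v_m\rightharpoonup v:=P_\Omega\circ\hat{G}_{\alpha,\gamma}^{\Omega,\mu,\ast}g$ weakly in $L^2(D)$, so the strong $L^2$-limit of the Rellich--Kondrachov subsequence must equal $v$ --- no need to pass to the limit in the variational identity term by term as you do. Your identification via the variational identity is also valid, just a bit longer.
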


\begin{proof}
Let 
\[v_m:=P_\Omega\circ \hat{G}_{\alpha,\gamma}^{\Omega,\mu,\ast}g_m\quad \text{ and}\quad v:=P_\Omega\circ \hat{G}_{\alpha,\gamma}^{\Omega,\mu,\ast}g.\] 
Then, as observed in Proposition \ref{P:consistent}, $u_m=v_m|_{\Omega_m}$ and $u=v|_\Omega$
are the unique weak solutions to (\ref{E:abstractDirichlet}) or (\ref{E:abstractRobin}) with zero boundary data on $\Omega_m$ with $f=g_m$ and on $\Omega$ with $f=g$. By the symmetry and boundedness of $\hat{G}_{\alpha,\gamma}^{\Omega,\mu,\ast}$ we see that $\lim_{m\to \infty} v_m=v$ weakly in $L^2(D)$ and weakly in $L^2(\Omega)$. On the other hand, we can again use the fact that they may also serve as a test function in (\ref{E:weaksolD}) respectively (\ref{E:weaksolR}). This implies that 
\[\left\|v_m\right\|_{W^{1,2}(\Omega)}^2 \leq c\:\mathcal{E}_{\alpha,\gamma}^{\Omega,\mu}(v_m)=c\int_\Omega g_mv_m\:dx\leq c\left(\sup_m \left\|g_m\right\|_{L^2(D)}\right)\left\|v_m\right\|_{L^2(\Omega)} \]
with a constant $c>0$ independent of $m$ and therefore  $\sup_m \left\|v_m\right\|_{W^{1,2}(\Omega)}<+\infty$.
Since $\Omega$ is a $W^{1,2}$-extension domain, it follows that $\sup_m \left\|E_\Omega(v_m|_\Omega)\right\|_{W^{1,2}(D)}<+\infty$, and by Rellich-Kondrachov, applied to $D$, there is some $v^\ast\in L^2(\Omega)$ such that along a sequence $(m_k)_k$ with $m_k\uparrow +\infty$ we have $\lim_{k\to \infty} E_\Omega(v_{m_k}|_\Omega)=v^\ast$ in $L^2(D)$.
Consequently $\lim_{k\to \infty} v_{m_k}=v^\ast=v$ in $L^2(\Omega)$, and since $v$ and all $v_{m_k}$ vanish outside $\Omega$, the result follows.
\end{proof}

\begin{remark}
A similar statement could also be shown for the quasi-inverses in Remark \ref{R:quasiinv} (iii).
\end{remark}

\section{Stability under convergence of domains and measures}\label{S:Mosco}

In this section we assume that $n\geq 2$ and $D\subset \mathbb{R}^n$ is a bounded Lipschitz domain. Suppose that $\Omega\subset D$ is a $W^{1,2}$-extension domain and $\mu$ is a nonzero Borel measure satisfying (\ref{E:upperdreg}) whose support $\Gamma=\supp\mu$ is a subset of $\overline{\Omega}$. Let $\alpha\geq 0$ and let $\gamma:\overline{D}\to [0,+\infty)$ be a bounded Borel function. We write $\mathcal{E}_{\alpha,\gamma}^{\Omega,\mu}$ denotes the quadratic form $\mathcal{E}_{\alpha,\gamma}$ defined in (\ref{E:quadraticform}) for this fixed domain $\Omega$ and measure $\mu$. We define an \emph{energy functional}  $J(\Omega,\mu)$ on $L^2(D)$ by  
\begin{equation}\label{extJ--} 
J(\Omega,\mu)(v)  
=\begin{cases}
\mathcal{E}_{\alpha,\gamma}^{\Omega,\mu}(v), \ v\in W^{1,2}(\Omega) ,\\
+\infty  , \ \ \ v\notin W^{1,2}(\Omega).
\end{cases}
\end{equation}

If we are given sequences of such domains and measures and these sequences converge in suitable ways, then we can observe the convergence of the associated energy functionals. To formulate the result we recall the corresponding notions of convergence.

For any closed set $K\subset \mathbb{R}^n$ and $\varepsilon>0$ we write $(K)_\varepsilon=\{x\in\mathbb{R}^n: d(x,K)\leq \varepsilon\}$ for its closed (outer) $\varepsilon$-parallel set. Recall that the \emph{Hausdorff distance} between two closed sets $K_1, K_2\subset \mathbb{R}^n$ is defined as 
\[d^H(K_1,K_2):=\inf\{\varepsilon>0: K_1\subset (K_2)_\varepsilon \text{ and } K_2\subset (K_1)_\varepsilon\}.\]  
A sequence $(K_m)_m$ of closed sets $K_m\subset \mathbb{R}^n$ is said to \emph{converge} to a closed set $K\subset \mathbb{R}^n$ \emph{in the Hausdorff sense} if $\lim_{m\to \infty} d^H(K_m,K)=0$.
A sequence $(\Omega_m)_m$ of open sets $\Omega_m\subset D$ is said to \emph{converge} to an open set $\Omega\subset D$ \emph{in the Hausdorff sense} if 
\[d^H(\overline{D}\setminus \Omega_m,\overline{D}\setminus \Omega)\to 0\quad \text{ as }\quad m\to \infty,\] 
\cite[Definition 2.2.8]{HENROT-e}. This definition does not depend on the choice of $D$, \cite[Remark 2.2.11]{HENROT-e}. (See \cite{shibahara2021gromov} for a related recent work.)

A sequence $(\Omega_m)_m$ of open sets $\Omega\subset \mathbb{R}^n$ is said to \emph{converge} to an open $\Omega$ \emph{in the sense of characteristic functions} if 
\[\lim_{m\to \infty} \mathbf{1}_{\Omega_m}=\mathbf{1}_{\Omega}\quad \text{in $L^p_{\loc}(\mathbb{R}^n)$ for all $p\in [1,\infty)$},\]
\cite[Definition 2.2.3]{HENROT-e}.

A sequence $(I_m)_m$ of quadratic functionals $I_m:L^2(D)\to [0,+\infty]$ \emph{converges to} a quadratic functional \emph{$I:L^2(D)\to [0,+\infty]$ in the sense of Mosco}, \cite[Definition 2.1.1]{MOSCO94}, if 
\begin{enumerate}
	\item[(i)] we have $\varliminf_{m\to \infty} I_m(u_m)\ge I(u)$ for every seqence $(u_m)_{m}$ converging weakly to $u$ in $L^2(D)$, 
	\item[(ii)] for every $u\in L^2(D)$ there exists a sequence $(u_m)_{m}$ converging strongly in $L^2(D)$ such that $\varlimsup_{m\to \infty} I_m(u_m)\le I(u)$.
\end{enumerate}
A sequence $(I_m)_m$ of quadratic functionals $I_m:L^2(D)\to [0,+\infty]$ \emph{converges to} a quadratic functional \emph{$I:L^2(D)\to [0,+\infty]$ in the Gamma-sense} if (ii) above holds and (i) above holds with weak convergence replaced by strong convergence, see \cite[Section 2]{BucurGiacominiTrebeschi2016}, \cite[Definition 2.2.1]{MOSCO94} or \cite{Braides,DALMASO-1993}. Obviously convergence in the Mosco sense implies convergence in the Gamma-sense.

\begin{theorem}\label{T:Mosco} 
Let $D\subset \mathbb{R}^n$ be a bounded Lipschitz domain, $\alpha\geq 0$ and let $\gamma\in C(\overline{D})$ be strictly positive or identically zero. Let $\varepsilon>0$,  $n-2<d\leq  n$ and let $(\Omega_m)_m$ be a sequence of $(\varepsilon,\infty)$-domains $\Omega_m \subset D$ and $(\mu_m)_m$ a sequence of nonzero Borel measures $\mu_m$, all satisfying \eqref{E:upperdreg} with the same constant and whose supports $\Gamma_m=\supp\mu_m$ are subsets of the $\overline{\Omega}_m$, respectively. For each $m$, let $J(\Omega_m,\mu_m)$ be as in \eqref{extJ--} but with $\Omega_m$, $\mu_m$ in place of $\Omega$, $\mu$. 
%	\begin{enumerate}		\item[(i)] 

If $\lim_m \Omega_m=\Omega$ in the Hausdorff sense and in the sense of characteristic functions and $\lim_m \mu_m=\mu$ weakly, then $\Omega$ is a $(\varepsilon,\infty)$-domain contained in $D$ and $\mu$ is a Borel measure satisfying \eqref{E:upperdreg} and whose support $\Gamma=\supp\mu$ is a subset of $\overline{\Omega}$. Moreover, we have	
\begin{equation}\label{E:Moscolimit}
		\lim_m J(\Omega_m,\mu_m)=J(\Omega,\mu)
\end{equation}
 in the sense of Mosco and in the Gamma-sense.
\end{theorem}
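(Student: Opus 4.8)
The plan is to verify the two Mosco conditions (i) (the $\liminf$--inequality for weakly convergent sequences) and (ii) (the existence of a recovery sequence) for the functionals $J(\Omega_m,\mu_m)$, after first establishing that the limiting data $(\Omega,\mu)$ are admissible. For the admissibility, I would argue that $\Omega$ is an $(\varepsilon,\infty)$-domain contained in $D$ because the $(\varepsilon,\infty)$-property is closed under Hausdorff convergence (the curve condition passes to the limit; this is standard, cf. the arguments in \cite{HINZ-2020, DRPT-2020}), and that $\mu$ satisfies \eqref{E:upperdreg} with the same constant $c_d$ by weak convergence of measures together with \cite[Proposition 2 (i)]{HINZ-2020}, exactly as in the proof of Lemma \ref{L:weaklyclosed}. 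That $\Gamma=\supp\mu\subset\overline\Omega$ follows from the fact that $\supp\mu_m\subset\overline{\Omega}_m$ together with Hausdorff convergence of the complements.

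For the $\liminf$--inequality (i): let $u_m\to u$ weakly in $L^2(D)$. We may assume $\varliminf_m J(\Omega_m,\mu_m)(u_m)<+\infty$, pass to a subsequence realizing the $\liminf$, and along it assume each $u_m\in W^{1,2}(\Omega_m)$ with $\sup_m\mathcal{E}_{\alpha,\gamma}^{\Omega_m,\mu_m}(u_m)<+\infty$. Using the uniform extension operators for $(\varepsilon,\infty)$-domains \cite[Theorem 1]{JONES-1981} together with the uniform Poincaré inequality from Theorem \ref{T:uniPoincare} (or rather Corollary \ref{C:equivnorms}, applied after normalizing the trace integrals, noting $\sup_m\mu_m(\mathbb{R}^n)<+\infty$), I would show $\sup_m\|E_{\Omega_m}u_m\|_{W^{1,2}(D)}<+\infty$; then by Rellich--Kondrachov on $D$ a further subsequence of $(E_{\Omega_m}u_m)_m$ converges weakly in $W^{1,2}(D)$ and strongly in $L^2(D)$ to some $w$, and the $L^2$-limit forces $w|_\Omega=u|_\Omega$ and $u=0$ a.e. on $D\setminus\Omega$ by convergence of characteristic functions; in particular $u\in W^{1,2}(\Omega)$. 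The gradient term passes to the limit by weak lower semicontinuity of $v\mapsto\int_{\Omega}|\nabla v|^2$ applied to the restrictions (combined with the fact that $\mathbf 1_{\Omega_m}\nabla E_{\Omega_m}u_m\rightharpoonup\mathbf 1_\Omega\nabla w$ in $L^2(D,\mathbb R^n)$, using $\mathbf 1_{\Omega_m}\to\mathbf 1_\Omega$), the zeroth order term $\alpha\int u^2$ by strong $L^2$-convergence, and the boundary term $\int_{\Gamma_m}\gamma(\mathrm{Tr}_{\Omega_m,\Gamma_m}u_m)^2\,d\mu_m\to\int_\Gamma\gamma(\mathrm{Tr}_{\Omega,\Gamma}u)^2\,d\mu$ is handled by Lemma \ref{L:traceconvergence} (continuity of $\gamma\in C(\overline D)$ is needed here to absorb the weight, via an approximation of $\gamma$ by a slight variant of the cited lemma, or directly since $\gamma$ is continuous and bounded).

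For the recovery sequence (ii): given $u\in L^2(D)$ with $J(\Omega,\mu)(u)<+\infty$, i.e. $u|_\Omega\in W^{1,2}(\Omega)$ (otherwise take $u_m:=u$ and the bound is trivially $+\infty\le+\infty$), I would set $\tilde u:=E_\Omega(u|_\Omega)\in W^{1,2}(\mathbb R^n)$ and $u_m:=\tilde u|_{\Omega_m}\in W^{1,2}(\Omega_m)$; then $u_m\to u$ strongly in $L^2(D)$ because $\mathbf 1_{\Omega_m}\tilde u\to\mathbf 1_\Omega\tilde u=u$ in $L^2(D)$ by convergence of characteristic functions. For the $\varlimsup$-estimate, $\int_{\Omega_m}|\nabla\tilde u|^2+\alpha\int_{\Omega_m}\tilde u^2=\int_D\mathbf 1_{\Omega_m}(|\nabla\tilde u|^2+\alpha\tilde u^2)\to\int_\Omega(|\nabla\tilde u|^2+\alpha\tilde u^2)=\int_\Omega(|\nabla u|^2+\alpha u^2)$ again by convergence of characteristic functions (note the limit uses $\nabla\tilde u=\nabla u$ a.e. on $\Omega$), and the boundary term $\int_{\Gamma_m}\gamma(\mathrm{Tr}_{\Omega_m,\Gamma_m}\tilde u)^2\,d\mu_m\to\int_\Gamma\gamma(\mathrm{Tr}_{\Omega,\Gamma}\tilde u)^2\,d\mu$ once more by Lemma \ref{L:traceconvergence} (with the fixed function $v:=\tilde u$, which converges to itself). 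Hence $\lim_m J(\Omega_m,\mu_m)(u_m)=J(\Omega,\mu)(u)$, which is stronger than (ii). Finally, since both Mosco conditions hold, convergence in the Gamma-sense follows automatically.

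The main obstacle I anticipate is the boundary term: making precise that the uniform upper $d$-regularity with a common constant, together with the trace theory of Section \ref{S:traces} and the weak convergence $\mu_m\rightharpoonup\mu$, is exactly what is needed to pass to the limit in $\int_{\Gamma_m}\gamma(\mathrm{Tr}_{\Omega_m,\Gamma_m}u_m)^2\,d\mu_m$ when only \emph{weak} $W^{1,2}$-convergence of the (extended) functions is available. This is precisely the content of Lemma \ref{L:traceconvergence} (whose proof uses the compact embedding $H^{\beta,2}(D)\hookrightarrow H^{\beta',2}(D)$ to convert weak convergence into strong convergence in a slightly weaker scale, enough for the trace operator); the only additional wrinkle is inserting the continuous weight $\gamma$, handled by a routine modification of that lemma's proof (approximating $\gamma$ uniformly and using $\sup_m\mu_m(\mathbb R^n)<\infty$). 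A secondary point requiring care is the bookkeeping of subsequences in (i): one must ensure the $\liminf$ is not destroyed, which is done in the usual way by first extracting a subsequence realizing it and then extracting further subsequences for the compactness arguments.
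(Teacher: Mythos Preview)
Your overall strategy matches the paper's: verify the limit data are admissible, then check the two Mosco conditions using uniform Jones extensions, compactness in $W^{1,2}(D)$, weak lower semicontinuity for the bulk terms, and Lemma~\ref{L:traceconvergence} for the boundary term. A few points need correction or comment.

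\textbf{(a) The $W^{1,2}$-bound in the case $\gamma\equiv 0$, $\alpha=0$.} Your appeal to Corollary~\ref{C:equivnorms} does not apply here: with no trace term in the energy, the functional gives you only $\sup_m\|\nabla u_m\|_{L^2(\Omega_m)}<\infty$. However, you already have $\sup_m\|u_m\|_{L^2(D)}<\infty$ from weak $L^2(D)$-convergence, hence $\sup_m\|u_m|_{\Omega_m}\|_{W^{1,2}(\Omega_m)}<\infty$ directly, and Jones' Theorem~1 applies. This is in fact \emph{simpler} than the paper's separate treatment of this case, which uses the homogeneous extension \cite[Theorem~2]{JONES-1981}, subtracts averages, and invokes the closedness of the range of $\nabla$ on $W^{1,2}(D)$ (Helmholtz decomposition) to identify the weak limit of the gradients.

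\textbf{(b) The claim $u=0$ a.e.\ on $D\setminus\Omega$ is false.} The functional $J(\Omega,\mu)(u)$ depends only on $u|_\Omega$ (this is the paper's convention for ``$u\in W^{1,2}(\Omega)$'' when $u\in L^2(D)$), so $u$ may be anything outside $\Omega$. The correct conclusion from $\mathbf{1}_{\Omega_m}u_m=\mathbf{1}_{\Omega_m}E_{\Omega_m}(u_m|_{\Omega_m})\to\mathbf{1}_\Omega w$ is only $w|_\Omega=u|_\Omega$, which is all you need. This error is harmless for the $\liminf$-inequality.

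\textbf{(c) Your recovery sequence converges to the wrong limit.} With $u_m:=\mathbf{1}_{\Omega_m}\tilde u$ you get $u_m\to\mathbf{1}_\Omega\tilde u=\mathbf{1}_\Omega u$ in $L^2(D)$, not $u_m\to u$. Since the energy only sees $u_m|_{\Omega_m}$, the fix is immediate: set $u_m:=\tilde u$ on $\Omega_m$ and $u_m:=u$ on $D\setminus\Omega_m$. Then $\|u_m-u\|_{L^2(D)}^2=\int_{\Omega_m}|\tilde u-u|^2\to\int_\Omega|\tilde u-u|^2=0$ by convergence of characteristic functions, while $J(\Omega_m,\mu_m)(u_m)$ is unchanged and your $\limsup$-computation goes through verbatim. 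The paper does not write this out either (it refers to \cite[Theorem~6.3]{HINZ-2020}), but you should.
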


The proof of this result makes use of the following fact shown in \cite[Theorem 2.3]{HINZ-2020}.

\begin{theorem}\label{T:epsinftystable}
	Let $D\subset \mathbb{R}^n$ be a bounded open set and let $\varepsilon>0$. Any sequence $(\Omega_m)_m$ of 
	$(\varepsilon,\infty)$-domains 
	contained in $D$  has a subsequence which converges to an $(\varepsilon,\infty)$-domain $\Omega\subset D$ in the Hausdorff sense. 
\end{theorem}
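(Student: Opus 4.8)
The plan is to first extract a Hausdorff-convergent subsequence by a compactness argument on the complements, then to verify that the resulting open limit set inherits the defining curve conditions of an $(\varepsilon,\infty)$-domain. Since all $\Omega_m\subset D$, the sets $K_m:=\overline{D}\setminus\Omega_m$ are closed subsets of the compact set $\overline{D}$, and because $\Omega_m$ is an open subset of the open set $D$ we have $\partial D\subset K_m$ for every $m$. By the Blaschke selection theorem (compactness of the hyperspace of closed subsets of a compact metric space in the Hausdorff metric) there is a subsequence, still written $(K_m)_m$, converging in Hausdorff distance to a closed set $K\subset\overline{D}$; passing $\partial D\subset K_m$ to the limit yields $\partial D\subset K$. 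I would then define $\Omega:=D\setminus K$, which is open and contained in $D$ and satisfies $\overline{D}\setminus\Omega=\partial D\cup K=K$, so that $\Omega_m\to\Omega$ in the Hausdorff sense of open sets recalled above.

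It remains to verify that $\Omega$ satisfies conditions (i) and (ii) of Definition \ref{DefEDD}; this is the heart of the argument. Fix $x,y\in\Omega$. Since $x,y\notin K$ and $K_m\to K$, the inequality $d(x,K_m)\ge d(x,K)-d^H(K_m,K)$ shows that $x,y\in\Omega_m$ for all large $m$, so I may take curves $\gamma_m\subset\Omega_m$ joining $x$ to $y$ and satisfying (i) and (ii) for $\Omega_m$. Reparametrizing each $\gamma_m$ by constant speed on $[0,1]$ makes it $(|x-y|/\varepsilon)$-Lipschitz, and since all $\gamma_m$ take values in the compact set $\overline{D}$, the Arzel\`a--Ascoli theorem furnishes a subsequence converging uniformly to a Lipschitz curve $\gamma:[0,1]\to\overline{D}$ from $x$ to $y$. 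Lower semicontinuity of length under uniform convergence gives $\ell(\gamma)\le\varliminf_m\ell(\gamma_m)\le|x-y|/\varepsilon$, which is condition (i) for $\Omega$.

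For condition (ii) and the inclusion $\gamma\subset\Omega$, the key observation is that for any interior point $z_m\in\Omega_m$ one has the identity $d(z_m,\partial\Omega_m)=d(z_m,K_m)$: indeed $\partial\Omega_m\subset K_m$ gives ``$\le$'', while any segment from $z_m$ to a nearest point of $K_m$ must leave $\Omega_m$ and hence first meet $\partial\Omega_m$, giving ``$\ge$''. Writing $z=\gamma(t)$ and $z_m=\gamma_m(t)\to z$, the elementary bound $|d(z_m,K_m)-d(z,K)|\le|z_m-z|+d^H(K_m,K)$ shows $d(z_m,K_m)\to d(z,K)$. Passing condition (ii) for $\gamma_m$ to the limit therefore yields
\[
d(z,K)=\lim_m d(z_m,K_m)=\lim_m d(z_m,\partial\Omega_m)\ge\varepsilon\,\frac{|x-z|\,|y-z|}{|x-y|}.
\]
Since $\partial\Omega\subset K$ gives $d(z,\partial\Omega)\ge d(z,K)$, condition (ii) for $\Omega$ follows. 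Moreover, for $z\ne x,y$ the right-hand side is strictly positive, so $z\notin K$; as $z\in\overline{D}$ and $K\supset\partial D$, this forces $z\in D\setminus K=\Omega$, whence $\gamma\subset\Omega$.

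Finally, the curves produced above show that any two points of $\Omega$ are joined by a path in $\Omega$, so $\Omega$ is path-connected, hence a domain; being open, bounded and contained in $D$, it is an $(\varepsilon,\infty)$-domain. (The degenerate possibility that the Hausdorff limit is empty---e.g.\ when the $\Omega_m$ shrink to points---is either excluded by convention or harmless, since in the applications the domains contain a fixed open set, which propagates to the limit and guarantees nonemptiness.) The main obstacle is precisely the passage of condition (ii) to the limit: it is resolved by the identity $d(z_m,\partial\Omega_m)=d(z_m,K_m)$, which re-expresses the boundary distance in terms of the Hausdorff-convergent complements $K_m$, together with the continuity of the point-to-set distance under Hausdorff convergence.
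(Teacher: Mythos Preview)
The paper does not give its own proof of this statement; it quotes the result from \cite[Theorem~2.3]{HINZ-2020} and uses it as a black box. Your argument supplies a correct self-contained proof along the natural route: extract a Hausdorff-convergent subsequence of complements $K_m=\overline{D}\setminus\Omega_m$ via Blaschke's selection theorem, pass to a uniform limit of the constant-speed curves $\gamma_m$ by Arzel\`a--Ascoli, and then transfer conditions (i) and (ii) using lower semicontinuity of length and the continuity of the map $(z,K)\mapsto d(z,K)$ under uniform and Hausdorff convergence. The key device---rewriting $d(z_m,\partial\Omega_m)$ as $d(z_m,K_m)$ so that the boundary distance is expressed through the Hausdorff-convergent objects---is exactly right.

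One minor slip: in the identity $d(z_m,\partial\Omega_m)=d(z_m,K_m)$ you have the two directions labelled the wrong way round. The inclusion $\partial\Omega_m\subset K_m$ gives $d(z_m,\partial\Omega_m)\ge d(z_m,K_m)$ (distance to a larger set is not larger); it is the segment argument that yields $d(z_m,\partial\Omega_m)\le d(z_m,K_m)$. The identity itself and its subsequent use are unaffected.

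Your remark on the possible degenerate empty limit is appropriate: as literally stated the theorem does not rule out sequences such as shrinking balls, whose limit is empty and hence not a domain. In every use the paper makes of the result (the proof of Theorem~\ref{T:Mosco}, Definition~\ref{DefShapeAdmis}) the limit is a priori nonempty, either by hypothesis or because all $\Omega_m$ contain a fixed open set $D_0$, so this causes no trouble in context.
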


Theorem \ref{T:Mosco} follows by similar arguments as used for \cite[Theorem 6.3]{HINZ-2020}, for convenience we sketch the necessary modifications.
\begin{proof}
That $\Omega\subset D$ is an $(\varepsilon,\infty)$-domain follows from Theorem \ref{T:epsinftystable}. The support $\Gamma:=\supp\mu$ of $\mu$ is contained in the Hausdorff limit $\lim_m \Gamma_m$, and by \cite[2.2.3.2 and Theorem 2.2.25]{HENROT-e} the latter is a subset of $\overline{\Omega}$.

	Let $(u_m)_m\subset L^2(D)$ be a sequence converging to $u$ weakly in $L^2(D)$ and $(u_{m_k})_k\subset (u_m)_m$ such that $\varliminf_m J(\Omega_m, \mu_m)(u_m)=\lim_k  J(\Omega_{m_k}, \mu_{m_k} )(u_{m_k})$.
	We will show that
	\begin{equation}\label{Eqliminf-}
	\lim_k  J(\Omega_{m_k}, \mu_{m_k})(u_{m_k})\geq J(\Omega,\mu)(u),
	\end{equation}
	what then implies condition (i). We may assume the left hand side of (\ref{Eqliminf-}) is finite, so that $u_{m_k}\in W^{1,2}(\Omega_{m_k})$ for all $k$ and $\sup_k J(\Omega_{m_k}, \mu_{m_k})(u_{m_k})<+\infty$. 
	
	Suppose first that $\alpha>0$ or $\gamma$ is bounded away from zero. Then the preceding implies that $\sup_k \|u_{m_k}\|_{W^{1,2}(\Omega_{m_k})}<+\infty$, in the first case this is immediate, in the second it is due to Corollary \ref{C:equivnorms} and the weak convergence of the measures. By \cite[Theorem 1]{JONES-1981} there exist extensions $E_{\Omega_{m_k}} u_{m_k}\in W^{1,2}(D)$ and a constant $c_{\Ext}>0$ such that $\|E_{\Omega_{m_k}} u_{m_k}\|_{W^{1,2}(D)}\leq c_{\Ext}\|u_{m_k}\|_{W^{1,2}(\Omega_{m_k})}$ for all $k$. Passing to subsequences, we may assume that $(E_{\Omega_{m_k}} u_{m_k})_k$ converges to some $u^\ast$ weakly in $L^2(D)$ and that (by Banach-Saks) the sequence of its convex combinations strongly converges to $u^\ast$. The hypotheses then imply that $u^\ast|_\Omega=u$ and that $\lim_k \mathbf{1}_{\Omega_{m_k}}E_{\Omega_{m_k}} u_{m_k}=\mathbf{1}_\Omega u$ weakly in $L^2(D)$ and hence $\varliminf_k \int_{\Omega_{m_k}}u_{m_k}^2dx\geq \int_\Omega u^2 dx$. We may also assume that $(\nabla E_{\Omega_{m_k}} u_{m_k})_k$ converges 
to some $w^\ast$ weakly in $L^2(D,\mathbb{R}^n)$, with convex combinations converging strongly. Since the convex combinations of 	$(E_{\Omega_{m_k}} u_{m_k})_k$ also converge strongly in $W^{1,2}(D)$, they necessarily converge to $u^\ast$, which therefore is in $W^{1,2}(D)$, and we have $w^\ast=\nabla u^\ast$. Consequently $\lim_k \mathbf{1}_{\Omega_{m_k}}\nabla E_{\Omega_{m_k}} u_{m_k}=\mathbf{1}_\Omega \nabla u$ weakly in $L^2(D,\mathbb{R}^n)$ and therefore
\begin{equation}\label{E:gradientsconv}
\varliminf_k \int_{\Omega_{m_k}} |\nabla u_{m_k}|^2dx\geq \int_\Omega |\nabla u|^2 dx.
\end{equation}
Combining and using the superadditivity of $\varliminf$, we arrive at
\begin{equation}\label{E:insidedomain}
\varliminf_k \left\lbrace \int_{\Omega_{m_k}} |\nabla u_{m_k}|^2dx + \alpha \int_{\Omega_{m_k}}u_{m_k}^2dx \right\rbrace \geq \int_\Omega |\nabla u|^2 dx + \alpha \int_\Omega u^2 dx.
\end{equation}
Using Lemma \ref{L:traceconvergence} and proceeding as in the proof of \cite[Theorem 6.3]{HINZ-2020} we can obtain the limit relation
\[\lim_k \int_{\Gamma_{m_k}} (\mathrm{Tr}_{\Omega_{m_k},\Gamma_{m_k}} u_{m_k})^2 d\mu_{m_k}=\int_\Gamma (\mathrm{Tr}_{\Omega,\Gamma}u)^2 d\mu,\]
and together with (\ref{E:insidedomain}) this shows (\ref{Eqliminf-}) and therefore (i).
	
In the case that $\alpha=0$ and $\gamma\equiv 0$ we have $\sup_k \|\nabla u_{m_k}\|_{L^2(\Omega_{m_k})}<+\infty$ and there are 
extensions $E_{\Omega_{m_k}} u_{m_k}\in L^0(D)$ and a constant $c_{\Ext}>0$ such that $\|\nabla E_{\Omega_{m_k}} u_{m_k}\|_{L^2(D)}\leq c_{\Ext}\|\nabla u_{m_k}\|_{L^2(\Omega_{m_k})}$ for all $k$, \cite[Theorem 2]{JONES-1981}. By Poincar\'e's inequality for $D$ we may assume that the sequence $((E_{\Omega_{m_k}}u_{m_k}-c_k))_k$, where $c_k=\lambda^n(D)^{-1}\int_{D}E_{\Omega_{m_k}}u_{m_k} dx$, converges to some $u^\ast$ weakly in $L^2(D)$, and the sequence of its convex combinations converges strongly. Together with the hypothesis this shows that $\mathbf{1}_\Omega u -\mathbf{1}_\Omega u^\ast=\lim_k \mathbf{1}_{\Omega_{m_k}}c_k$ must equal $c^\ast \mathbf{1}_\Omega$ with a constant $c^\ast \in\mathbb{R}$. On the other hand we may also assume that $(\nabla E_{\Omega_{m_k}}u_{m_k})_k$ converges to some $w^\ast$ weakly in $L^2(D,\mathbb{R}^n)$, with its convex combinations converging strongly. Each single function $E_{\Omega_{m_k}}u_{m_k}$ is in $W^{1,2}(D)$, and the image of $W^{1,2}(D)$ under $\nabla$ is a closed subspace of $L^2(D,\mathbb{R}^n)$, see for instance \cite[Chapter XIX, Section 1.3, Theorem 4]{Dutray-Lions-V6}. Since it is then also weakly closed, we must have $w^\ast =\nabla g$ with some $g\in W^{1,2}(D)$, and without loss of generality $\int_\Omega g\:dx=0$. Since the convex combinations of the gradients $\nabla E_{\Omega_{m_k}}u_{m_k}$ converge to $\nabla g$ strongly in $L^2(D,\mathbb{R}^n)$, another application of Poincar\'e's inequality shows that $g=u^\ast$. Using the convergence in the sense of characteristic functions we can again conclude that $\lim_k \mathbf{1}_{\Omega_{m_k}}\nabla E_{\Omega_{m_k}} u_{m_k}=\mathbf{1}_\Omega \nabla u$ weakly in $L^2(D,\mathbb{R}^n)$, so that (\ref{E:gradientsconv}) holds, which in the present case shows (\ref{Eqliminf-}) and therefore (i).

Condition (ii) can be verified as in \cite[Theorem 6.3]{HINZ-2020}.
\end{proof}

We record the following simple observation.
\begin{proposition}\label{P:projectorsconv}
If $D\subset \mathbb{R}^n$ is a bounded domain, $\Omega_m,\Omega\subset D$ are subdomains and $\Omega_m\to \Omega$ in the sense of characteristic functions, then $\lim_{m\to \infty} P_{\Omega_m}=P_\Omega$ in the strong sense on $ L^2(D)$.
\end{proposition}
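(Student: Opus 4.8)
The plan is to reduce everything to the absolute continuity of the Lebesgue integral. Strong convergence $P_{\Omega_m}\to P_\Omega$ on $L^2(D)$ means that $\|P_{\Omega_m}f-P_\Omega f\|_{L^2(D)}\to 0$ for each fixed $f\in L^2(D)$. Since $P_\Omega f=\mathbf{1}_\Omega f$ and $\mathbf{1}_{\Omega_m}-\mathbf{1}_\Omega$ takes only the values $0$ and $\pm 1$, we have the pointwise identity $|\mathbf{1}_{\Omega_m}-\mathbf{1}_\Omega|^2=\mathbf{1}_{\Omega_m\triangle\Omega}$, and therefore
\[
\|P_{\Omega_m}f-P_\Omega f\|_{L^2(D)}^2=\int_{\Omega_m\triangle\Omega}|f|^2\,dx.
\]

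First I would record that convergence in the sense of characteristic functions, together with the boundedness of $D$ (so that $\overline{D}$ is compact), gives $\mathbf{1}_{\Omega_m}\to\mathbf{1}_\Omega$ in $L^1(D)$, that is, $\lambda^n(\Omega_m\triangle\Omega)=\|\mathbf{1}_{\Omega_m}-\mathbf{1}_\Omega\|_{L^1(D)}\to 0$ as $m\to\infty$. Then, since $|f|^2\in L^1(D)$, the set function $A\mapsto\int_A|f|^2\,dx$ is absolutely continuous with respect to $\lambda^n$: for every $\varepsilon>0$ there is $\delta>0$ such that $\lambda^n(A)<\delta$ implies $\int_A|f|^2\,dx<\varepsilon$. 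Choosing $m$ large enough that $\lambda^n(\Omega_m\triangle\Omega)<\delta$ makes the right-hand side of the displayed equation smaller than $\varepsilon$. As $f\in L^2(D)$ was arbitrary, this is precisely strong convergence $P_{\Omega_m}\to P_\Omega$.

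I do not expect any genuine obstacle here; the only point worth a word is the passage from $L^p_{\loc}(\mathbb{R}^n)$-convergence of the indicators (as in the definition recalled above) to $L^1(D)$-convergence, which is immediate because $D$ is bounded. Alternatively, one could argue the last step by dominated convergence after extracting a $\lambda^n$-a.e.\ convergent subsequence of $(\mathbf{1}_{\Omega_m\triangle\Omega})_m$ and invoking the standard subsequence principle, but the absolute-continuity argument is direct and avoids passing to subsequences.
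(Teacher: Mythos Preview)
Your proof is correct and follows essentially the same route as the paper: both reduce to $\|P_{\Omega_m}f-P_\Omega f\|_{L^2(D)}^2=\int_D|\mathbf{1}_{\Omega_m}-\mathbf{1}_\Omega|^2|f|^2\,dx$ and then argue this vanishes because $|f|^2\in L^1(D)$ and the indicator difference tends to zero. The only cosmetic difference is in the last step: the paper phrases it via Vitali's theorem (convergence in $\psi^2\lambda^n$-measure plus uniform integrability of the bounded sequence $(\mathbf{1}_{\Omega_m})_m$), whereas you use the absolute continuity of $A\mapsto\int_A|f|^2\,dx$ directly, which is arguably the cleaner packaging.
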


\begin{proof}
Given $\psi\in L^2(D)$ we have $\left\|P_{\Omega_m}\psi-P_{\Omega}\psi\right\|_{L^2(D)}^2=\int_D|\mathbf{1}_{\Omega_m}-\mathbf{1}_\Omega|^2\psi^2dx=0$. This goes to zero by Vitali's convergence theorem, note that the sequence $(\mathbf{1}_{\Omega_m})_m$ is $L^2$-uniformly integrable w.r.t. the finite measure $\psi^2\lambda^n$, and the convergence of the domains in the sense of characteristic functions implies that $\mathbf{1}_{\Omega_m}\to \mathbf{1}_\Omega$ in $\lambda^n$-measure and by absolute continuity then also in $\psi^2\lambda^n$-measure. 
\end{proof}

The following preliminary convergence results for the resolvents (and quasi-inverses) is closely related to \cite[Theorem 3.1]{Weidmann1984}, see also \cite[p. 155/156]{Weidmann97}. 

\begin{corollary}\label{C:weaksolconv}
Let $n$, $D$, $\alpha$, $\gamma$, $\varepsilon$, $d$, $(\Omega_m)_m$ and $(\mu_m)_m$ be as in Theorem \ref{T:Mosco}. Suppose that $\lim_m \Omega_m=\Omega$ in the Hausdorff sense and in the sense of characteristic functions and $\lim_m \mu_m=\mu$ weakly. 
For any $\alpha>0$ we have both $\lim_{m\to\infty} \hat{G}_{\alpha,\gamma}^{\Omega_m,\mu_m,\ast}=\hat{G}_{\alpha, \gamma}^{\Omega,\mu,\ast}$ and 
\begin{equation}\label{E:quasiinvconv}
\lim_{m\to\infty}P_{\Omega_m}\circ \hat{G}_{\alpha,\gamma}^{\Omega_m,\mu_m,\ast}=P_{\Omega}\circ \hat{G}_{\alpha,\gamma}^{\Omega,\mu,\ast},
\end{equation}
in the strong sense on $L^2(D)$. If $\gamma\equiv 0$, $\alpha=0$ and $\ast=D$ or $\gamma$ is bounded away from zero, $\alpha=0$ and $\ast=R$, then (\ref{E:quasiinvconv}) still holds.
\end{corollary}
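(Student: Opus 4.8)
The plan is to reduce the statement to the classical equivalence between Mosco convergence of closed quadratic forms and strong convergence of the associated resolvents (quasi-inverses), carried out on the fixed Hilbert space $L^{2}(D)$.

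First I would note that, with the $+\infty$-convention of \eqref{extJ--} and the projections absorbed as in \eqref{E:dependondomain}, the energy functional $J(\Omega_{m},\mu_{m})$ \emph{is} the closed quadratic form $\hat{\mathcal E}^{\Omega_{m},\mu_{m}}_{\alpha,\gamma}$ on $L^{2}(D)$ (Corollary~\ref{C:closedonD}), and likewise for the limit. In the Robin and Neumann cases Theorem~\ref{T:Mosco} then says precisely that $\hat{\mathcal E}^{\Omega_{m},\mu_{m}}_{\alpha,\gamma}\to\hat{\mathcal E}^{\Omega,\mu}_{\alpha,\gamma}$ in the Mosco sense on $L^{2}(D)$; for the Dirichlet case ($\ast=D$) one invokes, or establishes by repeating the proof of Theorem~\ref{T:Mosco} verbatim, the corresponding Mosco convergence of the forms $(\mathcal E^{\Omega_{m},\mu_{m}}_{0,0}\circ P_{\Omega_{m}},V(\Omega_{m},\Gamma_{m})_{D})$, the only new point being that the vanishing of the traces on $\Gamma_{m}$ survives the weak (and convex-combination) limits used there, which follows from Lemma~\ref{L:traceconvergence} applied with $p=2$.

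For $\alpha>0$ I would then quote the abstract equivalence, in the quasi-inverse framework of \cite[Theorem 3.1]{Weidmann1984} (see also \cite{MOSCO94} and \cite[p. 155/156]{Weidmann97}), between this Mosco convergence and the strong convergence on $L^{2}(D)$ of the associated $\alpha$-resolvents; this gives $\hat G^{\Omega_{m},\mu_{m},\ast}_{\alpha,\gamma}\to\hat G^{\Omega,\mu,\ast}_{\alpha,\gamma}$ strongly. Relation \eqref{E:quasiinvconv} is then immediate from Proposition~\ref{P:projectorsconv} and the contractivity of the $P_{\Omega_{m}}$: for every $f\in L^{2}(D)$,
\[
\big\|P_{\Omega_{m}}\hat G^{\Omega_{m},\mu_{m},\ast}_{\alpha,\gamma}f-P_{\Omega}\hat G^{\Omega,\mu,\ast}_{\alpha,\gamma}f\big\|_{L^{2}(D)}
\le\big\|\hat G^{\Omega_{m},\mu_{m},\ast}_{\alpha,\gamma}f-\hat G^{\Omega,\mu,\ast}_{\alpha,\gamma}f\big\|_{L^{2}(D)}
+\big\|(P_{\Omega_{m}}-P_{\Omega})\hat G^{\Omega,\mu,\ast}_{\alpha,\gamma}f\big\|_{L^{2}(D)}\longrightarrow 0 .
\]
For the case $\alpha=0$ (which the Corollary permits only for $\ast=D$, and for $\ast=R$ with $\gamma$ bounded away from zero) the $\alpha$-less quasi-inverse is reached by letting $\alpha\downarrow0$: the uniform coercivity available here — Theorem~\ref{T:uniPoincare} applied to functions of $V(\Omega_{m},\Gamma_{m})$, for which the trace average vanishes, in the case $\ast=D$, and Corollary~\ref{C:equivnorms} in the case $\ast=R$ — bounds $\sup_{m}\|P_{\Omega_{m}}\hat G^{\Omega_{m},\mu_{m},\ast}_{0,\gamma}\|$ and, on testing the defining identities against the difference of the two solutions, yields $\|P_{\Omega_{m}}(\hat G^{\Omega_{m},\mu_{m},\ast}_{0,\gamma}-\hat G^{\Omega_{m},\mu_{m},\ast}_{\alpha,\gamma})f\|_{L^{2}(D)}\le c\,\alpha\,\|f\|_{L^{2}(D)}$ with $c$ independent of $m$ and $\alpha$; a routine approximation (choose $\alpha$ small, then $m$ large) then upgrades the $\alpha>0$ convergence to \eqref{E:quasiinvconv} at $\alpha=0$. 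Alternatively one can rerun the compactness argument of Lemma~\ref{L:compactnessprop} (uniform bound, Jones extension with $\varepsilon$-dependent norm \cite[Theorem 1]{JONES-1981}, Rellich-Kondrachov on $D$) and identify the resulting $L^{2}(D)$-limit with $P_{\Omega}\hat G^{\Omega,\mu,\ast}_{0,\gamma}f$ using the liminf/recovery conditions of the Mosco convergence.

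I expect the $\alpha=0$ part, together with the Dirichlet case, to be the main obstacle: there no single black box applies, and one must combine the Mosco (equivalently $\Gamma$-) convergence of the functionals with the uniform geometric control supplied by the parameter $\varepsilon$ (the $\varepsilon$-controlled extension operator and the uniform Poincar\'e inequality of Theorem~\ref{T:uniPoincare}) and with the stability of the boundary integral terms under weak convergence (Lemma~\ref{L:traceconvergence}), while keeping careful track of the passage between the fixed space $L^{2}(D)$ and the varying subspaces $L^{2}(\Omega_{m})$ through the projections $P_{\Omega_{m}}$.
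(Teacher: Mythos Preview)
Your argument for $\alpha>0$ is essentially the paper's: Mosco convergence of the extended forms $\hat{\mathcal E}^{\Omega_m,\mu_m}_{\alpha,\gamma}$ on $L^2(D)$ (Theorem~\ref{T:Mosco}) plus the abstract equivalence with strong resolvent convergence (\cite[Theorem 2.4.1]{MOSCO94}), followed by the same triangle-inequality splitting with Proposition~\ref{P:projectorsconv}. You are in fact slightly more careful than the paper in flagging that the Dirichlet case $\ast=D$ requires the form domain $V(\Omega_m,\Gamma_m)_D$ and hence a separate (but parallel) Mosco verification; the paper invokes Theorem~\ref{T:Mosco} and \cite{MOSCO94} without further comment.

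For $\alpha=0$ your route diverges from the paper's. The paper establishes the same uniform bound \eqref{E:supbound} from Theorem~\ref{T:uniPoincare} and Corollary~\ref{C:equivnorms}, but then observes that the quasi-inverses $P_{\Omega}\circ\hat G^{\Omega,\mu,\ast}_{\alpha,\gamma}\circ P_{\Omega}$ satisfy the resolvent equation (via Remark~\ref{R:quasiinv}(ii)), writes the difference at $\alpha=0$ in terms of the difference at some $\beta>0$ through a two-sided algebraic identity, and appeals to \cite[Theorem VIII.1.3]{Kato-1980}. Your approach instead uses the uniform coercivity directly to obtain the quantitative resolvent continuity $\|P_{\Omega_m}(\hat G^{\Omega_m,\mu_m,\ast}_{0,\gamma}-\hat G^{\Omega_m,\mu_m,\ast}_{\alpha,\gamma})f\|_{L^2(D)}\le c\,\alpha\,\|f\|_{L^2(D)}$ with $c$ independent of $m$, followed by a three-term $\varepsilon$-argument. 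This is correct and more elementary---it avoids the resolvent-equation machinery entirely---at the cost of being slightly less structural (the paper's argument would transfer verbatim to any holomorphic family of resolvents, whereas yours uses the specific variational form). Your alternative suggestion (rerunning the extension/Rellich--Kondrachov compactness of Lemma~\ref{L:compactnessprop} and identifying the limit via Mosco) also works and is closer in spirit to what the paper does later in Theorem~\ref{T:compactnesspropvar}.
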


We write $\left\|\cdot\right\|$ to denote the operator norm for operators from $L^2(D)$ to itself.

\begin{proof}
Suppose $\alpha>0$. Then the strong convergence of the resolvents is immediate from Theorem \ref{T:Mosco} and \cite[Theorem 2.4.1]{MOSCO94}. Identity (\ref{E:quasiinvconv}) follows using
\begin{align}
\Big\|P_{\Omega_m}\circ \hat{G}_{\alpha,\gamma}^{\Omega_m,\mu_m,\ast}f & -P_{\Omega}\circ \hat{G}_{\alpha,\gamma}^{\Omega,\mu,\ast}f\Big\|_{L^2(D)}\notag\\
& \leq\left\|P_{\Omega_m}\circ \hat{G}_{\alpha,\gamma}^{\Omega_m,\mu_m,\ast}f-P_{\Omega_m}\circ \hat{G}_{\alpha,\gamma}^{\Omega,\mu,\ast}f\right\|_{L^2(D)}
+\left\|(P_{\Omega_m}-P_\Omega)\circ \hat{G}_{\alpha,\gamma}^{\Omega,\mu,\ast}f\right\|_{L^2(D)},\notag
\end{align}
valid for all $f\in L^2(D)$, together with Proposition \ref{P:projectorsconv}. A key ingredient for this argument is the trivial uniform bound on the resolvents,
$\sup_m\big\|\hat{G}_{\alpha,\gamma}^{\Omega_{m},\mu_{m},\ast}\big\|\leq \frac{1}{\alpha}$. Under hypotheses stated for $\alpha=0$ we still have
\begin{equation}\label{E:supbound}
\sup_m \big\|P_{\Omega_m}\circ \hat{G}_{0,\gamma}^{\Omega_m,\mu_m,\ast}\big\| <+\infty, 
\end{equation}
note that by Theorem \ref{T:uniPoincare} and Corollary \ref{C:equivnorms}, combined with (\ref{E:GreenD}) respectively (\ref{E:GreenR}), we have 
\[\big\|\hat{G}_{0,\gamma}^{\Omega_m,\mu_m,\ast}f\big\|_{L^2(\Omega_m)}^2\leq c\:\mathcal{E}_{0,\gamma}^{\Omega_m,\mu_m,\ast}(\hat{G}_{0,\gamma}^{\Omega_m,\mu_m,\ast}f)\leq c\:\left\|f\right\|_{L^2(D)}\big\|\hat{G}_{0,\gamma}^{\Omega_m,\mu_m,\ast}f\big\|_{L^2(\Omega_m)}, \quad f\in L^2(D),\]
with a constant $c>0$ independent of $m$. To discuss the case $\alpha=0$ under the stated hypotheses, note that Remark \ref{R:quasiinv} (ii) we have $P_{\Omega}\circ \hat{G}_{\beta,\gamma}^{\Omega,\mu,\ast}\circ \hat{G}_{\alpha,\gamma}^{\Omega,\mu,\ast}\circ P_{\Omega}
=P_{\Omega}\circ \hat{G}_{\beta,\gamma}^{\Omega,\mu,\ast}\circ P_\Omega\circ \hat{G}_{\alpha,\gamma}^{\Omega,\mu,\ast}\circ P_{\Omega}$, and similarly for $\Omega_m$ and $\mu_m$ in place of $\Omega$ and $\mu$. Therefore the operators 
\[P_{\Omega}\circ \hat{G}_{\alpha,\gamma}^{\Omega,\mu,\ast}\circ P_{\Omega},\quad  \alpha\geq 0,\quad  \text{and}\quad  P_{\Omega_m}\circ \hat{G}_{\alpha,\gamma}^{\Omega_m,\mu_m,\ast}\circ P_{\Omega_m},\quad \alpha\geq 0,\] 
are quickly seen to satisfy the resolvent equation. As a consequence, 
\begin{multline}
P_{\Omega_m}\circ \hat{G}_{\alpha,\gamma}^{\Omega_m,\mu_m,\ast}\circ P_{\Omega_m}-P_{\Omega}\circ \hat{G}_{\alpha,\gamma}^{\Omega,\mu,\ast}\circ P_{\Omega}\notag\\
=\left(1+(\alpha-\beta)P_{\Omega_m}\circ \hat{G}_{\alpha,\gamma}^{\Omega_m,\mu_m,\ast}\circ P_{\Omega_m}\right)\left(P_{\Omega_m}\circ \hat{G}_{\beta,\gamma}^{\Omega_m,\mu_m,\ast}\circ P_{\Omega_m}-P_{\Omega}\circ \hat{G}_{\beta,\gamma}^{\Omega,\mu,\ast}\circ P_{\Omega}\right)\times\notag\\
\times\left(1+(\alpha-\beta)P_{\Omega}\circ \hat{G}_{\alpha,\gamma}^{\Omega, \mu,\ast}\circ P_{\Omega}\right)
\end{multline}
for all $\alpha,\beta\geq 0$, and combining this with the uniform bound (\ref{E:supbound}) we can follow the proof of \cite[Theorem VIII.1.3]{Kato-1980} to see that (\ref{E:quasiinvconv}) actually remains valid for $\alpha=0$.
\end{proof}

\begin{remark}\mbox{}
\begin{enumerate}
\item[(i)] In Theorem \ref{T:normresconv} below we upgrade this convergence statement to convergence in operator norm. The convergence for the case $\alpha=0$ under the stated hypotheses could then also be concluded from \cite[Chapter IV, Section 2.6, Theorem 2.25]{Kato-1980}.
\item[(ii)] Using the methods in \cite{Post-2012} one could also bypass Mosco convergence and provide a direct proof of a generalized form of norm resolvent convergence. This would easily allow to include the convergence in the case $\ast=N$ and $\alpha=0$ and to investigate rates of convergence. Since the focus here is different, we leave it for a follow up paper.
\end{enumerate}
\end{remark}

To simplify the statement of further results on the convergence of resolvents, we formulate an assumption.
\begin{assumption}\label{A:alphagamma}
The parameters $\alpha$, $\gamma$ and $\ast$ satisfy one of the following:
\begin{itemize}
\item $\gamma\equiv 0$, $\alpha\geq 0$ and $\ast=D$, or 
\item $\gamma$ is bounded away from zero, $\alpha\geq 0$ and $\ast=R$, or
\item $\gamma\equiv 0$,  $\alpha>0$ and $\ast=N$.
\end{itemize}
\end{assumption}

The following theorem may be seen as a 'joint' upgrade of Lemma \ref{L:compactnessprop} and Corollary \ref{C:weaksolconv}. The first statement is similar to \cite[Lemma 4.8]{BucurGiacominiTrebeschi2016}. 

\begin{theorem}\label{T:compactnesspropvar}
Let $n$, $D$, $\alpha$, $\gamma$, $\varepsilon$, $d$, $(\Omega_m)_m$ and $(\mu_m)_m$ be as in Theorem \ref{T:Mosco} and let Assumption \ref{A:alphagamma} be satisfied.
Suppose that $\lim_m \Omega_m=\Omega$ in the Hausdorff sense and in the sense of characteristic functions and $\lim_m \mu_m=\mu$ weakly. Let $(g_m)_{m=1}^\infty\subset L^2(D)$ be a sequence that converges weakly in $L^2(D)$ to some $g$. Then there is a sequence $(m_k)_{k=1}^\infty$ with $m_k\uparrow +\infty$ such that the following hold:
\begin{enumerate}
\item[(i)] We have 
\[\lim_{k\to \infty} P_{\Omega_{m_k}}\circ \hat{G}_{\alpha,\gamma}^{\Omega_{m_k},\mu_{m_k},\ast}g_{m_k}=P_\Omega\circ \hat{G}_{\alpha,\gamma}^{\Omega,\mu,\ast}g\quad \text{in $L^2(D)$}.\]
\item[(ii)] If $u_m$ and $u$ be the unique weak solutions to (\ref{E:abstractDirichlet}) or (\ref{E:abstractRobin}) with zero boundary data on $\Omega_m$ with $f=g_m$ and on $\Omega$ with $f=g$, then there is some $u^\ast\in W^{1,2}(D)$ with $u^\ast|_\Omega=u$ such that 
\begin{equation}\label{E:weakW12}
\lim_{k\to \infty} E_{\Omega_{m_k}}u_{m_k}=u^\ast\quad \text{weakly in $W^{1,2}(D)$.}
\end{equation}
Moreover, we have 
\begin{equation}\label{E:limitenergy}
\lim_{k\to \infty} J(\Omega_{m_k},\mu_{m_k})(u_{m_k})=J(\Omega,\mu)(u)
\end{equation}
and 
\begin{equation}\label{E:strongW12}
\lim_{k\to \infty} E_{\Omega_{m_k}}u_{m_k}=u\quad \text{in $W^{1,2}(\Omega)$.}
\end{equation}
\end{enumerate}
\end{theorem}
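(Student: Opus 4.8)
The plan is to run the familiar compactness-plus-identification scheme from Lemma~\ref{L:compactnesspropvar} in a form uniform over the varying data. Write $u_m\in W^{1,2}(\Omega_m)$ (resp.\ $V(\Omega_m,\Gamma_m)$) for the weak solution with $f=g_m$ and zero boundary data, and $v_m:=P_{\Omega_m}\circ\hat G_{\alpha,\gamma}^{\Omega_m,\mu_m,\ast}g_m=\mathbf 1_{\Omega_m}u_m$; let $u$, $v:=\mathbf 1_\Omega u$ be the corresponding objects for the limit problem. First I would test the weak formulation with $u_m$ itself to get $\mathcal E_{\alpha,\gamma}^{\Omega_m,\mu_m}(u_m)=\int_{\Omega_m}g_mu_m\,dx$, and then use Theorem~\ref{T:uniPoincare} and Corollary~\ref{C:equivnorms} (whose constants do not depend on the individual $\Omega_m$ or $\mu_m$; in the Robin case one also uses that $\gamma$ is bounded away from zero and that $\mu$ is nonzero, so $\mu_m(\Gamma_m)\to\mu(\Gamma)>0$) to deduce $\|u_m\|_{W^{1,2}(\Omega_m)}^2\le c\,\mathcal E_{\alpha,\gamma}^{\Omega_m,\mu_m}(u_m)$ with $c$ independent of $m$. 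This gives $\sup_m\|u_m\|_{W^{1,2}(\Omega_m)}<+\infty$, hence by \cite[Theorem~1]{JONES-1981} (uniform extension norm) $\sup_m\|E_{\Omega_m}u_m\|_{W^{1,2}(D)}<+\infty$. Passing to a subsequence $(m_k)_k$ I may assume $E_{\Omega_{m_k}}u_{m_k}\rightharpoonup u^\ast$ weakly in $W^{1,2}(D)$ and, by Rellich--Kondrachov for the bounded Lipschitz domain $D$, strongly in $L^2(D)$; by Proposition~\ref{P:projectorsconv} this forces $v_{m_k}=\mathbf 1_{\Omega_{m_k}}E_{\Omega_{m_k}}u_{m_k}\to\mathbf 1_\Omega u^\ast$ in $L^2(D)$. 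To prove (i) I would identify $\mathbf 1_\Omega u^\ast=v$ using that $P_\Omega\circ\hat G_{\alpha,\gamma}^{\Omega,\mu,\ast}$ and $P_{\Omega_m}\circ\hat G_{\alpha,\gamma}^{\Omega_m,\mu_m,\ast}$ are self-adjoint on $L^2(D)$ (Proposition~\ref{P:consistent}, Remark~\ref{R:quasiinv}~(ii)): for $\phi\in L^2(D)$, $\langle v_{m_k},\phi\rangle=\langle g_{m_k},P_{\Omega_{m_k}}\circ\hat G_{\alpha,\gamma}^{\Omega_{m_k},\mu_{m_k},\ast}\phi\rangle$, where $g_{m_k}\rightharpoonup g$ weakly and, by Corollary~\ref{C:weaksolconv}, the second factor converges strongly to $P_\Omega\circ\hat G_{\alpha,\gamma}^{\Omega,\mu,\ast}\phi$, so $\langle v_{m_k},\phi\rangle\to\langle v,\phi\rangle$. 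This gives (i), and also $u^\ast|_\Omega=u$, which is \eqref{E:weakW12}.

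Next I would prove \eqref{E:limitenergy} and prepare for \eqref{E:strongW12}. From $\mathcal E_{\alpha,\gamma}^{\Omega_{m_k},\mu_{m_k}}(u_{m_k})=\int_Dg_{m_k}v_{m_k}\,dx$, weak convergence of $g_{m_k}$ and strong convergence of $v_{m_k}$ give $\mathcal E_{\alpha,\gamma}^{\Omega_{m_k},\mu_{m_k}}(u_{m_k})\to\int_\Omega g\,u\,dx=\mathcal E_{\alpha,\gamma}^{\Omega,\mu}(u)$, the last step by testing the limit weak formulation with $u$. Writing $\mathcal E_{\alpha,\gamma}^{\Omega_m,\mu_m}(u_m)=A_m+B_m+C_m$ with $A_m=\int_{\Omega_m}|\nabla u_m|^2dx$, $B_m=\alpha\|v_m\|_{L^2(D)}^2$, $C_m=\int_{\Gamma_m}\gamma(\mathrm{Tr}_{\Omega_m,\Gamma_m}u_m)^2d\mu_m$, I have $B_{m_k}\to\alpha\int_\Omega u^2dx$ by strong $L^2$-convergence, and $C_{m_k}\to\int_\Gamma\gamma(\mathrm{Tr}_{\Omega,\Gamma}u)^2d\mu$ by Lemma~\ref{L:traceconvergence} applied to the measures $\gamma\mu_{m_k}$ (which satisfy \eqref{E:upperdreg} with constant $\|\gamma\|_{L^\infty}c_d$ and converge weakly to $\gamma\mu$) and to the weakly $H^{1,2}(D)$-convergent functions $E_{\Omega_{m_k}}u_{m_k}$, using $d>n-2$ and $\mathrm{Tr}_{D,\Gamma}u^\ast=\mathrm{Tr}_{\Omega,\Gamma}u$ (for $\ast=D,N$ the terms $C_m$ vanish). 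Combined with the convergence of the total energy, this yields $A_{m_k}\to\int_\Omega|\nabla u|^2dx$.

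For \eqref{E:strongW12} I would argue as follows. As in the proof of Theorem~\ref{T:Mosco}, $\mathbf 1_{\Omega_{m_k}}\nabla E_{\Omega_{m_k}}u_{m_k}\rightharpoonup\mathbf 1_\Omega\nabla u$ weakly in $L^2(D,\mathbb R^n)$; since $\|\mathbf 1_{\Omega_{m_k}}\nabla E_{\Omega_{m_k}}u_{m_k}\|_{L^2(D)}^2=A_{m_k}\to\|\mathbf 1_\Omega\nabla u\|_{L^2(D)}^2$, weak convergence together with convergence of the norms upgrades this to strong convergence in $L^2(D,\mathbb R^n)$; likewise $v_{m_k}\to v$ strongly in $L^2(D)$. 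Hence $\rho_k:=\mathbf 1_{\Omega_{m_k}}(|u_{m_k}|^2+|\nabla u_{m_k}|^2)\to\rho:=\mathbf 1_\Omega(|u|^2+|\nabla u|^2)$ in $L^1(D)$, and $\rho$ vanishes on $\partial\Omega$ because $\Omega$ is open. I would then split $\|(E_{\Omega_{m_k}}u_{m_k})|_\Omega-u\|_{W^{1,2}(\Omega)}^2$ into its contributions over $\Omega\cap\Omega_{m_k}$ and over $\Omega\setminus\Omega_{m_k}$. On $\Omega\cap\Omega_{m_k}$ the extension equals $u_{m_k}$ and $\mathbf 1_{\Omega_{m_k}}=\mathbf 1_\Omega=1$, so this contribution is $\le\|v_{m_k}-v\|_{L^2(D)}^2+\|\mathbf 1_{\Omega_{m_k}}\nabla u_{m_k}-\mathbf 1_\Omega\nabla u\|_{L^2(D,\mathbb R^n)}^2\to0$. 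Over $\Omega\setminus\Omega_{m_k}$ it is $\le 2\|E_{\Omega_{m_k}}u_{m_k}\|_{W^{1,2}(\Omega\setminus\Omega_{m_k})}^2+2\|u\|_{W^{1,2}(\Omega\setminus\Omega_{m_k})}^2$, and the second term tends to $0$ since $\lambda^n(\Omega\setminus\Omega_{m_k})\to0$. For the first term I would use the locality of the Jones extension: Hausdorff convergence gives $\Omega\setminus\Omega_{m_k}\subset(\partial\Omega)_{\delta_k}$ with $\delta_k\downarrow0$, the Jones construction bounds $\|E_{\Omega_{m_k}}u_{m_k}\|_{W^{1,2}(\Omega\setminus\Omega_{m_k})}^2$ by $C\int_{N_k}\rho_k\,dx$ for a union $N_k\subset\Omega_{m_k}$ of reflected Whitney cubes, and these cubes (meeting the thin set $\Omega\setminus\Omega_{m_k}$, hence of vanishing diameter by the convergence in the sense of characteristic functions) lie in $(\partial\Omega)_{C'\delta_k}$; since $\mathbf 1_{(\partial\Omega)_{C'\delta_k}}\downarrow\mathbf 1_{\partial\Omega}$ and $\rho_k\to\rho$ in $L^1(D)$, $\int_{N_k}\rho_k\,dx\le\int_D\mathbf 1_{(\partial\Omega)_{C'\delta_k}}\rho_k\,dx\to\int_{\partial\Omega}\rho\,dx=0$. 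Both contributions then vanish and \eqref{E:strongW12} follows.

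The routine parts are the uniform coercivity bounds and the weak--strong manipulations. The hard part will be the last step: controlling $\|E_{\Omega_{m_k}}u_{m_k}\|_{W^{1,2}(\Omega\setminus\Omega_{m_k})}$, which requires the explicit local estimates of the Jones (or Rogers, \cite{Rogers}) extension operator together with the geometric fact that $\Omega\setminus\Omega_{m_k}$ and the reflected Whitney cubes used over it shrink into arbitrarily small neighborhoods of $\partial\Omega$, where the $L^1$-limit $\rho$ of the energy densities vanishes; one should make sure the version of the extension theorem actually invoked carries these local bounds. If one only wants strong convergence of $E_{\Omega_{m_k}}u_{m_k}$ to $u$ in the sense of the Mosco-converging forms $J(\Omega_{m_k},\mu_{m_k})\to J(\Omega,\mu)$ — namely $L^2(D)$-convergence together with \eqref{E:limitenergy} — then this step is not needed and the argument reduces to the first three paragraphs.
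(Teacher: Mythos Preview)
Your argument for (i) and \eqref{E:weakW12} matches the paper's. For \eqref{E:limitenergy} you are more direct: you use the energy identity $\mathcal E_{\alpha,\gamma}^{\Omega_{m_k},\mu_{m_k}}(u_{m_k})=\int_{\Omega_{m_k}}g_{m_k}u_{m_k}\,dx$ together with (i), whereas the paper invokes both conditions of the Mosco convergence in Theorem~\ref{T:Mosco} and the minimizing property of $u_{m_k}$ (Proposition~\ref{P:weaksol}) to sandwich the minimum values; your route is shorter. For \eqref{E:strongW12} the paper proceeds differently and more economically. After showing (as you do) that the $L^2$- and trace-terms of the energy converge separately, hence $\|u_{m_k}\|_{W^{1,2}(\Omega_{m_k})}\to\|u\|_{W^{1,2}(\Omega)}$, the paper claims $\int_D(\mathbf 1_{\Omega_{m_k}}-\mathbf 1_\Omega)\big(|E_{\Omega_{m_k}}u_{m_k}|^2+|\nabla E_{\Omega_{m_k}}u_{m_k}|^2\big)\,dx\to 0$, deduces $\|E_{\Omega_{m_k}}u_{m_k}\|_{W^{1,2}(\Omega)}\to\|u\|_{W^{1,2}(\Omega)}$, and finishes via the Hilbert-space fact that weak convergence plus norm convergence in $W^{1,2}(\Omega)$ gives strong convergence. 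This bypasses your explicit $\Omega\cap\Omega_{m_k}$ versus $\Omega\setminus\Omega_{m_k}$ splitting and the Whitney-cube analysis of the Jones extension. That said, the paper's signed-integral claim amounts to controlling $\int_{\Omega\setminus\Omega_{m_k}}|\nabla E_{\Omega_{m_k}}u_{m_k}|^2\,dx$, precisely the quantity you flag as hard; the paper attributes this to ``the convergence in the sense of characteristic functions'' without further detail, whereas you sketch a justification via locality of the extension. The substantive difference is therefore packaging: the paper absorbs the delicate step into a one-line claim and then uses the clean weak-plus-norm trick, while you make the difficulty explicit and attack it directly.
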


\begin{remark}\label{R:compactnesspropvar}
Corollary \ref{C:weaksolconv} and Theorem \ref{T:compactnesspropvar} (applied with $g=g_m=f$) constitute a stability result for the respective boundary value problem along a sequence of varying domains in a common confinement.
\end{remark}

\begin{proof}
Let $v_m:=P_{\Omega_m}\circ \hat{G}_{\alpha,\gamma}^{\Omega_m,\mu_m,\ast}g_m$ and $v:=P_\Omega \circ \hat{G}_{\alpha,\gamma}^{\Omega,\mu,\ast}g$ and recall $v_m$ and $v$  the continuations by zero of the unique weak solutions $u_m$ and $u$ to (\ref{E:abstractDirichlet}) or (\ref{E:abstractRobin}) as stated. We have
\begin{equation}\label{E:weakconvvm}
\lim_{m\to \infty} v_m=v\quad \text{ weakly in $L^2(D)$},
\end{equation}
note that 
\begin{multline}
\lim_{m\to \infty}\left\langle v_m,w\right\rangle_{L^2(D)}=\lim_{m\to \infty}\left\langle g_m, P_{\Omega_{m}}\circ \hat{G}_{\alpha,\gamma}^{\Omega_{m},\mu_{m},\ast}P_{\Omega_{m}}w\right\rangle_{L^2(D)}\notag\\
=\left\langle g, P_{\Omega}\circ\hat{G}_{\alpha,\gamma}^{\Omega,\mu,\ast}P_{\Omega}w\right\rangle_{L^2(D)}=\left\langle v,w\right\rangle_{L^2(D)}
\end{multline}
for any $w\in L^2(D)$ by Remark \ref{R:quasiinv} (ii) and Corollary \ref{C:weaksolconv}.

Testing the solution of the respective problem against itself, we observe that
\[\left\|v_m\right\|_{W^{1,2}(\Omega_m)}^2\leq c\:\mathcal{E}_{\alpha,\gamma}^{\Omega_m,\mu_m}(v_m)=c\:\int_{\Omega_m}g_mv_m\:dx\leq c\:\left(\sup_m \left\|g_m\right\|_{L^2(D)}\right)\left\|v_m\right\|_{L^2(\Omega_m)}\]
with a constant $c>0$ independent of $m$. The independence of $c$ of $m$ is obvious if $\alpha>0$, it follows from Corollary \ref{C:equivnorms} if $\alpha=0$ and $\ast =D$ and or $\ast=R$. Consequently 
\[\sup_m\left\|v_m\right\|_{W^{1,2}(\Omega_m)}<+\infty.\]

Since all $\Omega_m$ are $(\varepsilon,\infty)$-domains with the same $\varepsilon$, we can use \cite[Theorems 1]{JONES-1981} to see that there are extensions $E_{\Omega_m}(v_m|_{\Omega_m})\in W^{1,2}(D)$ of $u_m=v_m|_{\Omega_m}$ such that 
\begin{equation}\label{E:boundedonD}
\sup_m\left\|E_{\Omega_m}(v_m|_{\Omega_m})\right\|_{W^{1,2}(D)}<+\infty.
\end{equation}

As a consequence there are a sequence $(m_k)_k$ with $m_k\uparrow +\infty$ and a function $u^\ast\in W^{1,2}(D)$ such that (\ref{E:weakW12}) holds, and by Banach-Saks we may assume the convex combinations of the functions $E_{\Omega_{m_k}}u_{m_k}$ converge to $u^\ast$ in $W^{1,2}(D)$. By Rellich-Kondrachov we may assume that also $\lim_{k\to \infty}E_{\Omega_{m_k}}u_{m_k}=w$ in $L^2(D)$ with some suitable $w\in L^2(D)$, and by the preceding we then must have $w=u^\ast$. Using Proposition \ref{P:projectorsconv} it follows that 
\[\lim_{k\to \infty} v_{m_k}=\lim_{k\to \infty} P_{\Omega_{m_k}}v_{m_k}=\lim_{k\to \infty} P_{\Omega_{m_k}} E_{\Omega_{m_k}}u_{m_k}=P_\Omega u^\ast\] 
in $L^2(D)$, so that $P_\Omega u^\ast=v$ by (\ref{E:weakconvvm}). This proves (i) and (\ref{E:weakW12}). 

To prove (\ref{E:limitenergy}) we can now proceed as in \cite[Theorem 5.3]{CAPITANELLI-2010-1}. By (i) we have 
\begin{equation}\label{E:inhomoconv}
\lim_{k\to \infty}\int_{\Omega_{m_k}}u_{m_k}g_{m_k}\:dx=\int_\Omega u g\:dx.
\end{equation}
The first condition of the convergence in the sense of Mosco  implies that 
\begin{equation}\label{E:harvestfirst}
\frac12 J(\Omega,\mu)(u)-\int_\Omega u g\:dx\leq \varliminf_{k\to \infty}\left\lbrace \frac12 J(\Omega_{m_k},\mu_{m_k})(u_{m_k})-\int_{\Omega_{m_k}}u_{m_k} g_{m_k}\:dx\right\rbrace.
\end{equation}
By the second condition there is a sequence $(w_k)_k\subset L^2(D)$ such that $\lim_{k\to \infty} w_k=u^\ast$ in $L^2(D)$ and 
\begin{equation}\label{E:harvestsecond}
\varlimsup_{k\to \infty} \left\lbrace \frac12 J(\Omega_{m_k},\mu_{m_k})(w_k)-\int_{\Omega_{m_k}}w_{k} g_{m_k}\:dx\right\rbrace\leq \frac12 J(\Omega,\mu)(u)-\int_\Omega u g\:dx.
\end{equation}
Since by Proposition \ref{P:weaksol} the function $u_{m_k}$ minimizes $w\mapsto \frac12 J(\Omega_{m_k},\mu_{m_k})(w)-\int_{\Omega_{m_k}}w g_{m_k}\:dx$, it follows that (\ref{E:harvestsecond}) dominates 
\[\varlimsup_{k\to \infty} \left\lbrace \frac12 J(\Omega_{m_k},\mu_{m_k})(u_{m_k})-\int_{\Omega_{m_k}}u_{m_k} g_{m_k}\:dx\right\rbrace.\]
Combining with (\ref{E:harvestfirst}) and taking into account (\ref{E:inhomoconv}), we obtain (\ref{E:limitenergy}). 

To see (\ref{E:strongW12}), note that by (i) we have $\lim_{k\to \infty} \int_{\Omega_{m_k}}u_{m_k}^2\:dx=\int_\Omega u^2\:dx$, and since $\lim_{m\to \infty} \gamma\cdot \mu_m=\gamma\cdot \mu$ weakly, Lemma \ref{L:traceconvergence} and (\ref{E:weakW12}) imply that 
\[\lim_{k\to \infty} \int_{\Gamma_{m_k}}\gamma (\mathrm{Tr}_{\Omega_{m_k},\Gamma_{m_k}} u_{m_k})^2 \:d\mu_{m_k}=\int_{\Gamma}\gamma (\mathrm{Tr}_{\Omega,\Gamma} u)^2 \:d\mu.\]
Together with (\ref{E:limitenergy}) it follows that 
\begin{equation}\label{E:movingforms}
\lim_{k\to \infty}\left\|u_{m_k}\right\|_{W^{1,2}(\Omega_{m_k})}=\left\|u \right\|_{W^{1,2}(\Omega)}.
\end{equation}
By the above and by the Helmholtz decomposition on $D$, \cite[Chapter XIX, Section 1.3, Theorem 4]{Dutray-Lions-V6}, the sequence $(\nabla E_{\Omega_{m_k}}u_{m_k})_k$ converges to $\nabla u^\ast$ weakly in $L^2(D,\mathbb{R}^n)$, and by the convergence in the sense of characteristic functions we obtain
\[\lim_{k\to \infty} \int_D (\mathbf{1}_{\Omega_{m_k}}-\mathbf{1}_\Omega) (\nabla E_{\Omega_{m_k}}u_{m_k})^2\:dx=0.\]
Similarly, we have 
\[\lim_{k\to \infty} \int_D (\mathbf{1}_{\Omega_{m_k}}-\mathbf{1}_\Omega) (E_{\Omega_{m_k}}u_{m_k})^2\:dx=0,\]
and combining with (\ref{E:movingforms}), it follows that 
\[\lim_{k\to \infty}\big\|E_{\Omega_{m_k}} u_{m_k}\big\|_{W^{1,2}(\Omega)}=\left\|u \right\|_{W^{1,2}(\Omega)}.\]
Since by (\ref{E:boundedonD}) we may assume that $\lim_{k\to \infty} E_{\Omega_{m_k}} u_{m_k}=u$ weakly in $W^{1,2}(\Omega)$, we arrive at (\ref{E:strongW12}).
\end{proof}

\section{Norm resolvent convergence and eigenvalues}\label{S:eigenvalues}

As before let $D\subset \mathbb{R}^n$, $n\geq 2$, and suppose that $\Omega\subset D$ is a $W^{1,2}$-extension domain. Let $\gamma:\overline{D}\to [0,+\infty)$ be a bounded Borel function. Assume that $\gamma\equiv 0$ and $\ast=D$, that $\gamma$ is bounded away from zero and $\ast=R$, or that $\gamma\equiv 0$ and $\ast=N$. Recall that $\mathcal{L}^{\Omega,\mu,\ast}_\gamma$ denotes the generator of $\mathcal{E}_{0,\gamma}^{\Omega,\mu}$, Remark \ref{R:quasiinv} (i). The eigenvalues 
\[0\leq \lambda_1(\Omega,\mu,\ast)\leq \lambda_2(\Omega,\mu,\ast)\leq ...\]
of $-\mathcal{L}^{\Omega,\mu,\ast}_\gamma$ have finite multiplicity and accumulate only at infinity, and the eigenfunctions $\varphi_n(\Omega,\mu,\ast)$ corresponding to the eigenvalues $\lambda_n(\Omega,\mu,\ast)$, respectively, form a complete orthonormal system $(\varphi_n(\Omega,\mu,\ast))_n$ of $L^2(\Omega)$. Given $a<b$, let $\pi_{(a,b)}(\Omega,\mu,\ast)$ denote the
spectral projector associated with the interval $(a,b)$.

We denote the continuation of $\varphi_n(\Omega,\mu,\ast)$ by zero to $D$ by the same symbol. Let $(\chi_n(D\setminus \Omega,\mu,\ast))_n$ be a complete orthonormal system in $L^2(D\setminus \Omega)$ and denote the extensions by zero of its elements again  by the same symbols. Then $(\varphi_n(\Omega,\mu,\ast))_n\cup (\chi_n(D\setminus \Omega,\mu,\ast))_n$ is a complete orthonormal system in $L^2(D)$, the $\lambda_n(\Omega,\mu,\ast)$ are also eigenvalues of $\hat{\mathcal{E}}_{0,\gamma}^{\Omega,\mu}$ on $L^2(D)$, in the sense that we have $\hat{\mathcal{E}}_{0,\gamma}^{\Omega,\mu}(\varphi_n(\Omega,\mu,\ast),\psi)=\lambda_n(\Omega,\mu,\ast)\int_D \varphi_n(\Omega,\mu,\ast)\:\psi\:dx$
for all $\psi$ from $V(\Omega,\Gamma)_D$ respectively $W^{1,2}(\Omega)_D$, and all $\chi_n(D\setminus \Omega,\mu,\ast)$
are eigenfunctions corresponding to the eigenvalue zero.

Now suppose we are in the situation of Theorem \ref{T:Mosco}. Theorem \ref{T:compactnesspropvar} allows to upgrade from the strong convergence of the resolvents observed in Corollary \ref{C:weaksolconv} and Theorem \ref{T:compactnesspropvar} to norm convergence; this is similar to \cite[Theorem 3.2]{BucurGiacominiTrebeschi2016}. We can then conclude convergence properties for spectral projectors, eigenvalues and eigenfunctions.

\begin{theorem}\label{T:normresconv}
Let $n$, $D$, $\alpha$, $\gamma$, $\varepsilon$, $d$, $(\Omega_m)_m$ and $(\mu_m)_m$ be as in Theorem \ref{T:Mosco}.
Suppose that $\lim_m \Omega_m=\Omega$ in the Hausdorff sense and in the sense of characteristic functions and $\lim_m \mu_m=\mu$ weakly. There is a sequence $(m_k)_{k=1}^\infty$ with $m_k\uparrow +\infty$ such that the following hold.
\begin{enumerate}
\item[(i)] If Assumption \ref{A:alphagamma} holds is satisfied, then 
\[\lim_{k\to \infty} P_{\Omega_{m_k}}\circ \hat{G}_{\alpha,\gamma}^{\Omega_{m_k},\mu_{m_k},\ast}=P_\Omega\circ \hat{G}_{\alpha,\gamma}^{\Omega,\mu,\ast}\]
in operator norm. 
\item[(ii)] Let
$\lambda_+(\Omega,\mu,\ast)$ denote the smallest nonzero eigenvalue of $-\mathcal{L}^{\Omega,\mu,\ast}_\gamma$, let $a$ and $b$ be elements of the resolvent set of $-\mathcal{L}^{\Omega,\mu,\ast}_\gamma$ with $0<a<b$. Then we have 
\[\lim_{k\to \infty} \pi_{(a,b)}(\Omega_{m_k},\mu_{m_k},\ast)=\pi_{(a,b)}(\Omega,\mu,\ast)\] 
in operator norm.
\item[(iii)] The spectra of the operators $-\mathcal{L}^{\Omega_{m_k},\mu_{m_k},\ast}_\gamma$ converge to the spectrum of $-\mathcal{L}^{\Omega,\mu,\ast}_\gamma$ in the Hausdorff sense. The eigenvalues $\lambda_n(\Omega,\mu,\ast)$ of the operator $-\mathcal{L}^{\Omega,\mu,\ast}_\gamma$ are exactly the limits as $k\to \infty$ of sequences of the eigenvalues of the operators $-\mathcal{L}^{\Omega_{m_k},\mu_{m_k},\ast}_\gamma$,
\begin{equation}\label{E:eivalconv}
\lambda_n(\Omega,\mu,\ast)=\lim_{k\to \infty}\lambda_n(\Omega_{m_k},\mu_{m_k},\ast).
\end{equation}
\item[(iv)] For each $n$ we can find a sequence of normalized eigenfunctions $\varphi^{(m_k)}_n$ of the operators $-\mathcal{L}^{\Omega_{m_k},\mu_{m_k},\ast}_\gamma$ that converges to $\varphi_n(\Omega,\mu,\ast)$ in $L^2(D)$. Moreover, for any $\Phi\in C(\mathbb{R})$ with $\Phi(0)=0$ we have $\lim_{k\to \infty}\Phi(\mathcal{L}^{\Omega_{m_k},\mu_{m_k},\ast}_\gamma)\varphi^{(m_k)}_n=
\Phi(\mathcal{L}^{\Omega,\mu,\ast}_\gamma)\varphi_n(\Omega,\mu,\ast)$ in $L^2(D)$ (in the sense of continuation by zero). If all eigenvalues $\lambda_n(\Omega_{m_k},\mu_{m_k},\ast)$ are simple, then the $\varphi^{(m_k)}_n$ equal $\varphi_n(\Omega_{m_k},\mu_{m_k},\ast)$, $k\geq 1$, up to signs.

For any $n$ the sequence $(\varphi_n(\Omega_{m_k},\mu_{m_k},\ast))_k$ has a subsequence converging  in $L^2(D)$ to a normalized eigenfunction $\varphi_n$ of $-\mathcal{L}^{\Omega,\mu,\ast}_\gamma$ corresponding to $\lambda_n(\Omega,\mu,\ast)$. For $\Phi$ as specified above also the respective images of the members of this subsequence under the operators $\Phi(\mathcal{L}^{\Omega_{m_k},\mu_{m_k},\ast}_\gamma)$ converge to $\Phi(\mathcal{L}^{\Omega,\mu,\ast}_\gamma)\varphi_n$ in $L^2(D)$. If $\lambda_n(\Omega,\mu,\ast)$ is a simple eigenvalue, then the full sequence converges to $\varphi_n=\pm\varphi_n(\Omega,\mu,\ast)$.
\end{enumerate}
\end{theorem}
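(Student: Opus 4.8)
\emph{Plan for (i), and overall strategy.} The statement is organized so that (i) carries essentially all the work and (ii)--(iv) are harvested from it by the standard perturbation theory of compact self-adjoint operators; I would prove (i) first. Set $R_m:=P_{\Omega_m}\circ\hat{G}_{\alpha,\gamma}^{\Omega_m,\mu_m,\ast}$ and $R:=P_\Omega\circ\hat{G}_{\alpha,\gamma}^{\Omega,\mu,\ast}$. Under Assumption \ref{A:alphagamma} these are bounded, self-adjoint and nonnegative on $L^2(D)$, and moreover \emph{compact}: they map $L^2(D)$ boundedly into $W^{1,2}(\Omega_m)_D$, resp.\ $W^{1,2}(\Omega)_D$, the norm control being uniform by Corollary \ref{C:equivnorms}, so Rellich--Kondrachov for $D$ applies to the Jones extensions (\cite[Theorem 1]{JONES-1981}), exactly as in the proof of Lemma \ref{L:compactnessprop}. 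Corollary \ref{C:weaksolconv} already gives $R_m\to R$ strongly. The decisive point I would establish is that $R_m\to R$ \emph{compactly}, i.e.\ $R_mg_m\to Rg$ in $L^2(D)$ whenever $g_m\rightharpoonup g$ in $L^2(D)$: this is Theorem \ref{T:compactnesspropvar}(i) along a subsequence, and since the limit $Rg$ is determined by $g$ and every subsequence of $(\Omega_m,\mu_m)$ still converges to $(\Omega,\mu)$ in the required senses, a routine subsequence argument upgrades it to the full sequence. Then compact convergence together with compactness of $R$ forces $\|R_m-R\|\to0$: were this false one could pick $\delta>0$ and $L^2(D)$-unit vectors $g_m$ with $\|(R_m-R)g_m\|\ge\delta$ along a subsequence, pass (weak compactness of the unit ball) to $g_m\rightharpoonup g$, and get a contradiction since then $R_mg_m\to Rg$ and $Rg_m\to Rg$. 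This proves (i), in fact for the full sequence $m$.

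\textbf{Parts (ii) and (iii).} Since $-\mathcal{L}_\gamma^{\Omega,\mu,\ast}$ has purely discrete spectrum accumulating only at $+\infty$, its eigenvalues $\lambda$ correspond, via $\lambda\mapsto(\alpha+\lambda)^{-1}$, bijectively to the nonzero eigenvalues of $R$ on $L^2(D)$ with the same (zero-extended) eigenspaces, while $0$ is an eigenvalue of $R$ whose eigenspace contains $L^2(D\setminus\Omega)$; likewise for $R_m$. Norm convergence of the compact self-adjoint operators $R_m\to R$ implies both convergence of the eigenvalues listed in decreasing order (min--max stability) and norm convergence of the Riesz/spectral projections attached to any interval whose closure lies in the resolvent set of $R$; see the standard perturbation theory in \cite[Chapter~IV]{Kato-1980}. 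Choosing a cutoff $\epsilon>0$ below no eigenvalue of $R$ other than $0$, so as to separate off the degenerate value $0$, and transporting through $\lambda\mapsto(\alpha+\lambda)^{-1}$, I would read off $\lambda_n(\Omega_m,\mu_m,\ast)\to\lambda_n(\Omega,\mu,\ast)$, which is (\ref{E:eivalconv}) as well as the asserted Hausdorff convergence of the spectra (no pollution, no loss). For (ii): if $0<a<b$ lie in the resolvent set of $-\mathcal{L}_\gamma^{\Omega,\mu,\ast}$, then by the eigenvalue convergence just obtained $a,b$ lie in the resolvent set of $-\mathcal{L}_\gamma^{\Omega_m,\mu_m,\ast}$ for all large $m$, the interval $\big((\alpha+b)^{-1},(\alpha+a)^{-1}\big)$ has closure avoiding both $0$ and $\sigma(R)$, and $\pi_{(a,b)}(\cdot,\cdot,\ast)$ is exactly the spectral projection of the associated quasi-inverse for that interval; hence $\pi_{(a,b)}(\Omega_m,\mu_m,\ast)\to\pi_{(a,b)}(\Omega,\mu,\ast)$ in operator norm.

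\textbf{Part (iv).} For the eigenfunctions I would argue through Theorem \ref{T:compactnesspropvar} once more. Put $\psi_m:=\varphi_n(\Omega_m,\mu_m,\ast)$; testing the eigenvalue identity against $\psi_m$ gives $\mathcal{E}_{0,\gamma}^{\Omega_m,\mu_m}(\psi_m)=\lambda_n(\Omega_m,\mu_m,\ast)$, which with Corollary \ref{C:equivnorms} yields $\sup_m\|\psi_m\|_{W^{1,2}(\Omega_m)}<+\infty$ and hence $\sup_m\|E_{\Omega_m}\psi_m\|_{W^{1,2}(D)}<+\infty$ by \cite[Theorem 1]{JONES-1981}. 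Passing to a subsequence with $\psi_{m_k}\rightharpoonup\psi$ in $L^2(D)$ and feeding the resolvent identity $\psi_{m_k}=(\alpha+\lambda_n(\Omega_{m_k},\mu_{m_k},\ast))\,R_{m_k}\psi_{m_k}$ into Theorem \ref{T:compactnesspropvar}(i) together with (\ref{E:eivalconv}), I obtain $\psi_{m_k}\to(\alpha+\lambda_n(\Omega,\mu,\ast))R\psi=:\psi$ \emph{strongly} in $L^2(D)$; thus $\|\psi\|_{L^2(D)}=1$ and $\psi$ is a normalized eigenfunction of $-\mathcal{L}_\gamma^{\Omega,\mu,\ast}$ for $\lambda_n(\Omega,\mu,\ast)$, which is the last assertion of (iv). A diagonal extraction over $n$ then furnishes one subsequence $(m_k)_k$ along which $\varphi_n(\Omega_{m_k},\mu_{m_k},\ast)$ converges in $L^2(D)$ for every $n$ to an orthonormal system (strong limit of orthonormal systems) of normalized eigenfunctions; taking this as the complete orthonormal system $(\varphi_n(\Omega,\mu,\ast))_n$ of the limit and $\varphi_n^{(m_k)}:=\varphi_n(\Omega_{m_k},\mu_{m_k},\ast)$ proves the first assertion, and when $\lambda_n(\Omega,\mu,\ast)$ is simple this basis is forced up to signs. (In the genuinely degenerate case one may instead take $\varphi_n^{(m_k)}$ to be the $L^2(D)$-normalized image of $\varphi_n(\Omega,\mu,\ast)$ under $\pi_{(\lambda_n(\Omega,\mu,\ast)-\delta,\,\lambda_n(\Omega,\mu,\ast)+\delta)}(\Omega_{m_k},\mu_{m_k},\ast)$, a unit vector in the pertinent spectral subspace converging to $\varphi_n(\Omega,\mu,\ast)$ by (ii).) Since $\varphi_n^{(m_k)}$ sits in the spectral subspace of $\mathcal{L}_\gamma^{\Omega_{m_k},\mu_{m_k},\ast}$ for eigenvalues near $-\lambda_n(\Omega,\mu,\ast)$, which collapse to $-\lambda_n(\Omega,\mu,\ast)$ by (\ref{E:eivalconv}), continuity of $\Phi$ and $\Phi(0)=0$ (the latter needed for $\Phi(\mathcal{L}_\gamma^{\cdot,\cdot,\ast})$ to be defined on the $L^2(D)$-realizations, whose kernels contain $L^2(D\setminus\Omega_{m_k})$ and $L^2(D\setminus\Omega)$) give $\Phi(\mathcal{L}_\gamma^{\Omega_{m_k},\mu_{m_k},\ast})\varphi_n^{(m_k)}\to\Phi(-\lambda_n(\Omega,\mu,\ast))\varphi_n(\Omega,\mu,\ast)=\Phi(\mathcal{L}_\gamma^{\Omega,\mu,\ast})\varphi_n(\Omega,\mu,\ast)$ in $L^2(D)$.

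\textbf{Main obstacle.} The substantive step is (i), precisely the passage from strong to norm resolvent convergence via compact convergence, which leans entirely on Theorem \ref{T:compactnesspropvar}; that result absorbs the hard analysis (uniform Jones extensions, the uniform Poincar\'e inequality of Theorem \ref{T:uniPoincare}, the trace convergence of Lemma \ref{L:traceconvergence}, the Mosco convergence of Theorem \ref{T:Mosco}). After (i) the remainder is routine perturbation theory; the only points requiring care are the infinitely degenerate zero eigenvalue of the $L^2(D)$-realizations, handled by staying away from $0$, and in (iv) the non-canonical choice of eigenbasis inside degenerate eigenspaces.
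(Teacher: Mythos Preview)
Your proof of (i) is essentially the paper's: the paper also picks, for each $m$, a near-maximizer $g_m$ for $\|R_m-R\|$, passes to a weakly convergent subsequence, and invokes Lemma~\ref{L:compactnessprop} for $Rg_{m_k}\to Rg$ and Theorem~\ref{T:compactnesspropvar}(i) for $R_{m_k}g_{m_k}\to Rg$. Your observation that the sub-subsequence principle upgrades this to the full sequence is correct and slightly sharper than what the paper states.

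For (iii), the paper proves \eqref{E:eivalconv} by isolating one eigenvalue at a time in a small interval $(a,b)$ and estimating $|\lambda_+^{(m_k)}-\lambda_+|$ through $\|\hat G_a\circ\pi_{(a,b)}\|$, then iterates. Your route via min--max for the ordered nonzero eigenvalues of the compact self-adjoint $R_m$, transported through $\lambda\mapsto(\alpha+\lambda)^{-1}$, is cleaner and equivalent.

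For the first assertion of (iv) your primary argument has a small wrinkle: the statement treats $(\varphi_n(\Omega,\mu,\ast))_n$ as a \emph{given} orthonormal basis, so you cannot simply take the diagonal limits of the $\varphi_n(\Omega_{m_k},\mu_{m_k},\ast)$ and rename them $\varphi_n(\Omega,\mu,\ast)$; in a degenerate eigenspace the limit system need not match the prescribed basis. Your parenthetical fix---take $\varphi_n^{(m_k)}$ to be the normalization of $\pi_{(\lambda_n-\delta,\lambda_n+\delta)}(\Omega_{m_k},\mu_{m_k},\ast)\varphi_n(\Omega,\mu,\ast)$---is exactly the paper's construction and is what actually proves the claim. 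For the second assertion of (iv) your resolvent-identity argument $\psi_{m_k}=(\alpha+\lambda_n^{(m_k)})R_{m_k}\psi_{m_k}$ combined with compact convergence is a legitimate alternative to the paper's route, which instead projects $\psi_{m_k}$ via $\pi_{(a,b)}(\Omega,\mu,\ast)$, normalizes, and shows the difference tends to zero using (ii).
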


\begin{proof}
To see (i) we can follow the proof of \cite[Theorem 3.2]{BucurGiacominiTrebeschi2016}. It suffices to verify that along a sequence $(m_k)_k$ as stated, we have 
\begin{equation}\label{E:provenormresconv}
\lim_{k\to \infty} \sup_{g\in L^2(D),\ \left\|g\right\|_{L^2(D)}\leq 1}\left\|P_{\Omega_{m_k}}\circ \hat{G}_{\alpha,\gamma}^{\Omega_{m_k},\mu_{m_k},\ast}g-P_{\Omega}\circ \hat{G}_{\alpha,\gamma}^{\Omega,\mu,\ast}g\right\|_{L^2(D)}=0.
\end{equation}
For each $m$ we can, by Lemma \ref{L:compactnessprop}, find a function $g_m\in L^2(D)$ with $\left\|g_m\right\|_{L^2(D)}\leq 1$
such that 
\begin{multline}\label{E:minimizer}
\left\|P_{\Omega_m}\circ \hat{G}_{\alpha,\gamma}^{\Omega_{m},\mu_{m},\ast}g_m-P_\Omega\circ \hat{G}_{\alpha,\gamma}^{\Omega,\mu,\ast}g_m\right\|_{L^2(D)}\\
=\sup_{g\in L^2(D),\ \left\|g\right\|_{L^2(D)}\leq 1}\left\|P_{\Omega_{m}}\circ \hat{G}_{\alpha,\gamma}^{\Omega_{m},\mu_{m},\ast}g-P_{\Omega}\circ \hat{G}_{\alpha,\gamma}^{\Omega,\mu,\ast}g\right\|_{L^2(D)}.
\end{multline}
Since $(g_m)_m\subset L^2(D)$ is bounded, we can find a sequence $(m_k)_k$ with $m_k\uparrow +\infty$ such that $\lim_{k\to \infty} g_{m_k}=g$ weakly in $L^2(D)$. Another application of Lemma \ref{L:compactnessprop} gives 
\[\lim_{k\to \infty} \left\|P_\Omega\circ \hat{G}_{\alpha,\gamma}^{\Omega,\mu,\ast}g_{m_k}-P_\Omega\circ \hat{G}_{\alpha,\gamma}^{\Omega,\mu,\ast}g\right\|_{L^2(D)}=0,\]
and from Theorem \ref{T:compactnesspropvar} we obtain 
\[\lim_{k\to \infty} \left\|P_{\Omega_{m_k}}\circ \hat{G}_{\alpha,\gamma}^{\Omega_{m_k},\mu_{m_k},\ast}g_{m_k}-P_\Omega\circ \hat{G}_{\alpha,\gamma}^{\Omega,\mu,\ast}g\right\|_{L^2(D)}=0.\]
Combining, using the triangle inequality and taking into account (\ref{E:minimizer}), we arrive at (\ref{E:provenormresconv}).

The projection $P_\Omega$ does not affect the complement of the eigenspace of $-\mathcal{L}^{\Omega,\mu,\ast}_\gamma$ for the eigenvalue zero. Therefore statement (ii) follows using well-known arguments, \cite[Theorem VIII.23]{REED-SIMON-1980}. See also \cite[p. 154 and the related Theorem 2]{Weidmann97}. 

The first part in (iii) follows from \cite[Chapter IV, Section 3.1, Theorem 3.1 and Remark 3.3]{Kato-1980}, applied to the resolvents, together with \cite[Chapter VIII, Section 1.2, Theorem 1.14]{Kato-1980}. 

Next, note that the first eigenvalue 
$\lambda_1(\Omega,\mu,\ast)$ of $-\mathcal{L}^{\Omega,\mu,\ast}_\gamma$ is zero if and only $\ast=N$ and $\gamma\equiv 0$. In this case also $\lambda_1(\Omega_{m_k},\mu_{m_k},\ast)=0$ for all $k$. The corresponding normalized eigenfunctions are $\varphi_1(\Omega,\mu,N)=(\lambda^n(\Omega))^{-1}\mathbf{1}_{\Omega}$ and $\varphi_1(\Omega_{m_k},\mu_{m_k},N)=(\lambda^n(\Omega_{m_k}))^{-1}\mathbf{1}_{\Omega_{m_k}}$, and by the convergence of the domains in the sense of characteristic functions (and the bounded convergence theorem) we have $\lim_{k\to \infty} \varphi_1(\Omega_{m_k},\mu_{m_k},N)=\varphi_1(\Omega,\mu,N)$ in $L^2(D)$.

Now consider the smallest nonzero eigenvalue $\lambda_+(\Omega,\mu,\ast)$ and let $(a,b)$, with $0<a<b$ from the resolvent set of $\mathcal{L}^{\Omega,\mu,\ast}_\gamma$, be a small interval around it and containing no other eigenvalue of $-\mathcal{L}^{\Omega,\mu,\ast}_\gamma$. Then for large enough $k$ the number $a$ is in the resolvent set of $-\mathcal{L}^{\Omega_{m_k},\mu_{m_k},\ast}_\gamma$, the interval $(a,b)$ also contains its smallest nonzero eigenvalue $\lambda_+(\Omega_{m_k},\mu_{m_k},\ast)$, but no other of its eigenvalues; moreover, $\lambda_+(\Omega_{m_k},\mu_{m_k},\ast)$ and $\lambda_+(\Omega,\mu,\ast)$ have the same multiplicity, \cite[Chapter IV, Section 3.4, Theorem 3.16, and Section 3.5]{Kato-1980}. We have 
\[\big| \lambda_+(\Omega_{m_k},\mu_{m_k},\ast) -\lambda_+(\Omega,\mu,\ast)\big|\leq (b-a)^2\left|\frac{1}{\lambda_+(\Omega_{m_k},\mu_{m_k},\ast)-a}-\frac{1}{\lambda_+(\Omega,\mu,\ast)-a}\right|.\]
By the preceding $\pi_{(a,b)}(\Omega,\mu,\ast)=P_\Omega\circ\pi_{(a,b)}(\Omega,\mu,\ast)$ is the orthogonal projection onto the eigenspace for $\lambda_+(\Omega,\mu,\ast)$, we therefore have 
\[\big\|\hat{G}_{a,\gamma}^{\Omega,\mu,\ast}\circ \pi_{(a,b)}(\Omega,\mu,\ast)\big\|=\frac{1}{\lambda_+(\Omega,\mu,\ast)-a},\]
and similarly for $\Omega_{m_k}$ and $\mu_{m_k}$ in place of $\Omega$ and $\mu$. Using the reverse triangle inequality, we therefore have 
\begin{multline}\left|\frac{1}{\lambda_+(\Omega_{m_k},\mu_{m_k},\ast)-a}-\frac{1}{\lambda_+(\Omega,\mu,\ast)-a}\right|\notag\\
\leq \big\|\hat{G}_{a,\gamma}^{\Omega_{m_k},\mu_{m_k},\ast}\circ \pi_{(a,b)}(\Omega_{m_k},\mu_{m_k},\ast)-\hat{G}_{a,\gamma}^{\Omega,\mu,\ast}\circ \pi_{(a,b)}(\Omega,\mu,\ast)\big\|,
\end{multline}
and by the triangle inequality this is bounded by 
\[\big\|\hat{G}_{a,\gamma}^{\Omega_{m_k},\mu_{m_k},\ast}\circ P_{\Omega_{m_k}}-\hat{G}_{a,\gamma}^{\Omega,\mu,\ast}\circ P_\Omega\big\|+ \frac{1}{a}\left\|\pi_{(a,b)}(\Omega_{m_k},\mu_{m_k},\ast)-\pi_{(a,b)}(\Omega,\mu,\ast)\right\|.\]
Since by (i) and (ii) this converges to zero as $k\to \infty$, we obtain (\ref{E:eivalconv}) for $\lambda_+(\Omega,\mu,\ast)$. We can now move on to the next eigenvalue of $-\mathcal{L}^{\Omega,\mu,\ast}_\gamma$ strictly larger than $\lambda_+(\Omega,\mu,\ast)$ and apply the same arguments. Proceeding inductively, we obtain (\ref{E:eivalconv}) for all nonzero eigenvalues.

To see (iv) for eigenfunctions corresponding to a nonzero eigenvalue $\lambda_n(\Omega,\mu,\ast)$, let $(a,b)$ be a small interval as above containing this but no other eigenvalue of $-\mathcal{L}^{\Omega,\mu,\ast}_\gamma$. Statement (ii) implies that for any corresponding normalized eigenfunction $\varphi_n(\Omega,\mu,\ast)$ we have 
\[\lim_{k\to \infty} \pi_{(a,b)}(\Omega_{m_k},\mu_{m_k},\ast)\varphi_n(\Omega,\mu,\ast)=\pi_{(a,b)}(\Omega,\mu,\ast)\varphi_n(\Omega,\mu,\ast)=\varphi_n(\Omega,\mu,\ast)\quad \text{in $L^2(D)$}\]
and in particular, $\pi_{(a,b)}(\Omega_{m_k},\mu_{m_k},\ast)\varphi_n(\Omega,\mu,\ast)\neq 0$ for large enough $k$. The functions 
\[\varphi^{(m_k)}_n:=\frac{\pi_{(a,b)}(\Omega_{m_k},\mu_{m_k},\ast)\varphi_n(\Omega,\mu,\ast)}{\left\|\pi_{(a,b)}(\Omega_{m_k},\mu_{m_k},\ast)\varphi_n(\Omega,\mu,\ast)\right\|_{L^2(D)}}\]
converge to $\varphi_n(\Omega,\mu,\ast)$ in $L^2(D)$ as $k\to \infty$. Since, again by \cite[Chapter IV, Section 3.4, Theorem 3.16 and Section 3.5]{Kato-1980},
the images of the $\pi_{(a,b)}(\Omega_{m_k},\mu_{m_k},\ast)$ are eigenspaces of the $-\mathcal{L}^{\Omega_{m_k},\mu_{m_k},\ast}_\gamma$ for a single eigenvalue $\lambda_n(\Omega_{m_k},\mu_{m_k},\ast)$, respectively, the functions $\varphi^{(m_k)}_n$ are normalized eigenfunctions for these eigenvalues. By the spectral theorem, (ii) and the continuity of $\Phi$ we also have 
\begin{multline}
\lim_{k\to\infty} \Phi(\mathcal{L}^{\Omega_{m_k},\mu_{m_k},\ast}_\gamma)\varphi^{(m_k)}_n=\lim_{k\to\infty} \Phi(\lambda_n(\Omega_{m_k},\mu_{m_k},\ast))\varphi^{(m_k)}_n\notag\\
=\lim_{k\to\infty} \Phi(\lambda_n(\Omega,\mu,\ast))\varphi_n(\Omega,\mu,\ast)=\lim_{k\to \infty}\Phi(\mathcal{L}^{\Omega,\mu,\ast}_\gamma)\varphi_n(\Omega,\mu,\ast)
\end{multline}
in $L^2(D)$. If all $\lambda_n(\Omega_{m_k},\mu_{m_k},\ast)$ are simple, then $\varphi^{(m_k)}_n=\pm 
\varphi_n(\Omega_{m_k},\mu_{m_k},\ast)$ by normalization. This shows the first part in (iv). To see the second, let $(\varphi^{(m_k)}_n)_k$ now denote a 
sequence of normalized eigenfunctions $\varphi^{(m_k)}_n$ corresponding to $\lambda_n(\Omega_{m_k},\mu_{m_k},\ast)$, respectively.
By (ii), the triangle inequality and the reverse triangle inequality we have $\lim_{k\to \infty} \big\| \pi_{(a,b)}(\Omega,\mu,\ast)\varphi^{(m_k)}_n\big\|=1$,
so that for large enough $k$ we can define 
\[\varphi_{n,k}:=\frac{\pi_{(a,b)}(\Omega,\mu,\ast)\varphi^{(m_k)}_n}{\big\|\pi_{(a,b)}(\Omega,\mu,\ast)\varphi^{(m_k)}_n\big\|_{L^2(D)}}.\]
Again using (ii) we can conclude that
\begin{align}
\lim_{k\to \infty} \big\|\varphi_{n,k}-\varphi^{(m_k)}_n\big\|_{L^2(D)}&\leq \lim_{k\to \infty} \left|\big\|\pi_{(a,b)}(\Omega,\mu,\ast)\varphi^{(m_k)}_n\big\|_{L^2(D)}^{-1}-1\right|\:\big\|\pi_{(a,b)}(\Omega,\mu,\ast)\varphi^{(m_k)}_n\big\|_{L^2(D)}\notag\\
&\qquad \qquad +\lim_{k\to \infty}\big\|\pi_{(a,b)}(\Omega,\mu,\ast)\varphi^{(m_k)}_n-\pi_{(a,b)}(\Omega_{m_k},\mu_{m_k},\ast)\varphi^{(m_k)}_n\big\|_{L^2(D)}\notag\\
&=0.\notag
\end{align}
Since the eigenspace of $\lambda_n(\Omega,\mu,\ast)$ is finite dimensional, the sequence $(\varphi_{n,k})_k$ has a subsequence converging in $L^2(D)$ to a limit $\varphi_n$ which is a normalized eigenfunction of $\mathcal{L}^{\Omega,\mu,\ast}_\gamma$ corresponding to 
$\lambda_n(\Omega,\mu,\ast)$. If the eigenspace is one-dimensional, this limit $\varphi_n$ must be $\pm \varphi_n(\Omega,\mu,\ast)$.

\end{proof}

\section{Shape admissible domains and compactness}\label{S:compactness}

We define classes of admissible domains and prove their compactness. As before we assume $n\geq 2$. The following definition is a generalization of \cite[Definition 2]{HINZ-2020}.

\begin{definition}\label{DefShapeAdmis}
Let $D_0\subset D \subset \mathbb{R}^n$ be non-empty  bounded Lipschitz domains. A triple $(\Omega,\nu,\mu)$ is called \emph{shape admissible} with parameters $D$, $D_0$, $\varepsilon>0$, $n-1\leq s<n$, $\bar c\geq \bar c_{s}>0$, $0\leq d\leq n$ and $ c_d>0$, if 
\begin{enumerate}
\item[(i)] $\Omega$ is an $(\varepsilon,\infty)$-domain such that $D_0\subset  \Omega\subset D$, 
\item[(ii)] $\nu$ is a finite Borel measure $\nu$
with $\supp \nu=\partial \Omega$ satisfying the  (weak, local) lower regularity condition
\begin{equation}\label{E:lowreg-w}
\nu(\overline{B(x,r)})\geq \bar c_{s}\:r^s, \quad x\in  \partial \Omega,\quad  0<r\leq 1,
\end{equation}
and the total mass bound 
\begin{equation}\label{E:totalmass}
\nu(\partial\Omega)\leq \bar c.
\end{equation}
\item[(iii)] $\mu$ is a nonzero Borel measure satisfying (\ref{E:upperdreg}) and $\Gamma=\supp\mu\subset \overline{\Omega}$.
\end{enumerate}
The set of such triples is denoted by $U_{ad}(D,D_0,\varepsilon, s, \bar c_{s}, \bar c, d, c_d)$. We refer to the measures $\nu$ as in (ii) as \emph{boundary volumes} and the the measures $\mu$ in (iii) as \emph{trace volumes}.
\end{definition}

Condition (\ref{E:lowreg-w}) implies that the Hausdorff dimension of $\partial\Omega$ is less or equal $s$, \cite{FALCONER,MATTILA}. We state the corresponding compactness result which had basically been shown in \cite[Theorems 3 and Remark 6]{HINZ-2020}.

\begin{theorem}\label{T:compact} Suppose that the parameters $D$,$D_0$, $\varepsilon$, $s$, $\bar c_{s}$, $\bar c, d, c_d$ are as in Definition~\ref{DefShapeAdmis}. 
\begin{enumerate}
\item[(i)] The class $U_{ad}(D,D_0,\varepsilon, s, \bar c_{s}, \bar c, d, c_d)$ of shape admissible triples is  compact   in the Hausdorff sense, in the sense of characteristic functions, in the sense of compacts, and in the sense of weak converges of the boundary volumes and the trace volumes.
% $\mu_{m_k}$. 
\item[(ii)] If  for a sequence $((\Omega_{m},\nu_m,\mu_m))_m\subset U_{ad}(D,D_0,\varepsilon, s, \bar c_{s}, \bar c, d, c_d)$  the boundary volumes $\nu_m$
%$\mu_{m}$ 
converge weakly,
%   to a Borel measure $\mu$, 
then the domains $\Omega_{m}$ converge  in the Hausdorff sense, in the sense of characteristic functions, and in the sense of compacts.  
\end{enumerate}
\end{theorem}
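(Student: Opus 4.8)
The plan is to prove Theorem~\ref{T:compact} by combining the known stability results for $(\varepsilon,\infty)$-domains with the Ahlfors-regularity bookkeeping for the two families of measures, essentially reorganizing the arguments of \cite[Theorems 3 and Remark 6]{HINZ-2020} around the more general measure hypotheses used here.

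First I would treat part (ii), since it is the engine for part (i). Given a sequence $((\Omega_m,\nu_m,\mu_m))_m\subset U_{ad}(D,D_0,\varepsilon,s,\bar c_s,\bar c,d,c_d)$ whose boundary volumes $\nu_m$ converge weakly to some finite Borel measure $\nu$, I would first invoke Theorem~\ref{T:epsinftystable}: passing to a subsequence, the $\Omega_m$ converge in the Hausdorff sense to an $(\varepsilon,\infty)$-domain $\Omega\subset D$, and since $D_0\subset\Omega_m$ for all $m$, also $D_0\subset\Omega$. The lower regularity bound \eqref{E:lowreg-w} for the $\nu_m$, together with weak convergence, forces $\supp\nu=\partial\Omega$: indeed, if $x\in\partial\Omega$ then by Hausdorff convergence there are $x_m\in\partial\Omega_m$ with $x_m\to x$, and the lower mass bound $\nu_m(\overline{B(x_m,r)})\ge\bar c_s r^s$ passes to the limit (using that $\overline{B(x,r+\delta)}$ contains $\overline{B(x_m,r)}$ eventually and the portmanteau theorem) to give $\nu(\overline{B(x,r)})>0$; conversely $\supp\nu\subset\lim_m\partial\Omega_m\subset\partial\Omega$ by \cite[2.2.3.2 and Theorem 2.2.25]{HENROT-e}. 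The crucial point — and what I expect to be the main obstacle — is to upgrade Hausdorff convergence of $\Omega_m$ to convergence in the sense of characteristic functions; this is exactly where the lower regularity of the $\nu_m$ is needed to rule out the limit domain being ``thinner'' than the Hausdorff limit suggests (i.e., to exclude a positive-measure discrepancy between $\Omega$ and the characteristic-function limit of the $\mathbf 1_{\Omega_m}$). I would reproduce the argument of \cite[Theorem 3]{HINZ-2020}: the bound \eqref{E:lowreg-w} with $s<n$ gives a uniform control on $\lambda^n((\partial\Omega_m)_\delta)$ — a Minkowski-content type estimate — which prevents mass from escaping to the boundary in the limit and yields $\mathbf 1_{\Omega_m}\to\mathbf 1_\Omega$ in $L^p_{\loc}$; convergence in the sense of compacts follows from the Hausdorff convergence of the complements together with $D_0\subset\Omega_m$.

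For part (i), I would start from an arbitrary sequence $((\Omega_m,\nu_m,\mu_m))_m$ in the class and extract a subsequence making everything converge. By Banach--Alaoglu (the total masses $\nu_m(\partial\Omega_m)\le\bar c$ are uniformly bounded, and the $\mu_m$ are uniformly bounded because they are supported in $\overline{D}$ and satisfy \eqref{E:upperdreg}) we may assume $\nu_m\rightharpoonup\nu$ and $\mu_m\rightharpoonup\mu$ weakly for finite Borel measures $\nu,\mu$ on $\overline{D}$. Part (ii) then gives a further subsequence along which $\Omega_m\to\Omega$ in the Hausdorff sense, in the sense of characteristic functions and in the sense of compacts, with $\Omega$ an $(\varepsilon,\infty)$-domain, $D_0\subset\Omega\subset D$, and $\supp\nu=\partial\Omega$. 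It remains to verify that the limiting measures satisfy the defining inequalities: for $\mu$, that \eqref{E:upperdreg} holds with the same $c_d$ follows from \cite[Proposition 2 (i)]{HINZ-2020} (weak limits of upper $d$-regular measures are upper $d$-regular with the same constant), and $\mu\neq 0$ follows because the $\mu_m$ have uniformly positive total mass — a consequence of \eqref{E:upperdreg} being saturated at scale $r=1$ combined with a lower mass bound, or more simply from the fact that, as in \cite[Remark 6]{HINZ-2020}, one can normalize; also $\Gamma=\supp\mu\subset\lim_m\overline{\Omega}_m\subset\overline{\Omega}$. For $\nu$, the total mass bound \eqref{E:totalmass} passes to the limit by lower semicontinuity of mass under weak convergence applied to the closed set $\partial\Omega$, and the lower regularity \eqref{E:lowreg-w} passes to the limit by the portmanteau argument sketched above (carefully using closed balls and a small enlargement). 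Hence $(\Omega,\nu,\mu)\in U_{ad}(D,D_0,\varepsilon,s,\bar c_s,\bar c,d,c_d)$, which is the claimed compactness. I would close by noting that the only genuinely delicate point is the Minkowski-content estimate forcing characteristic-function convergence, and that this is precisely the role played by the auxiliary boundary volumes $\nu$ (cf.\ Remark~\ref{R:boundaryvol}).
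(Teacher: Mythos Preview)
Your approach is essentially the paper's: the authors simply cite \cite[Theorem 3]{HINZ-2020} for (ii) and add Banach--Alaoglu for the trace volumes to get (i), and you have correctly unpacked what that citation contains (Hausdorff compactness of $(\varepsilon,\infty)$-domains, the Minkowski-content estimate from the lower $s$-regularity to force characteristic-function convergence, portmanteau for the regularity of the limits). Two loose ends are worth tightening.

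First, in (ii) the statement asserts convergence of the \emph{full} sequence $(\Omega_m)_m$, not just a subsequence, yet you only extract a subsequence via Theorem~\ref{T:epsinftystable}. You have the ingredient to close this: you show $\supp\nu=\partial\Omega$, and since $D_0\subset\Omega$ the domain $\Omega$ is the connected component of $\mathbb{R}^n\setminus\supp\nu$ containing $D_0$, hence is uniquely determined by $\nu$. The standard ``every subsequence has a further subsequence with the same limit'' argument then gives convergence of the whole sequence; you should say this explicitly.

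Second, your justification that the limit trace volume $\mu$ is nonzero is not convincing. The upper regularity \eqref{E:upperdreg} gives only an \emph{upper} bound on masses, so ``saturation at $r=1$'' yields no uniform lower bound on $\mu_m(\Gamma_m)$; a sequence with $\mu_m(\Gamma_m)\to 0$ would produce a zero weak limit, which is not in the class. The paper's proof does not address this either (it defers to \cite[Remark 6]{HINZ-2020}), so you are not missing an argument present in the paper, but your fallback of ``one can normalize'' is not innocuous: rescaling $\mu_m$ changes the constant $c_d$ in \eqref{E:upperdreg}, so the normalized sequence need not stay in the \emph{same} parametrized class. If you want a self-contained proof of compactness as stated, you either need an additional hypothesis (e.g.\ a uniform lower bound on $\mu_m(\Gamma_m)$), or to accept that the limit triple may have $\mu=0$ and that ``compact'' here is being used in the sense of the closure being contained in the analogous class with $\mu$ allowed to vanish.
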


\cite[Theorems 3]{HINZ-2020} is recovered as a special case of Theorem \ref{T:compact} in the situation that $\mu_m=\nu_m$ for all $m$. In this special case the uniform bound (\ref{E:totalmass}) on the total boundary volumes can be dropped, because it is automatically satisfied as a consequence of (\ref{E:upperdreg}). Note also that (\ref{E:lowreg-w}) implies that the measures are nonzero.

\begin{proof}
Statement (ii) had been proved in \cite[Theorem 3 (ii)]{HINZ-2020}, statement (i) follows from \cite[Theorem 3 (i)]{HINZ-2020} and Banach-Alaoglu, applied to a (sub-)sequence of trace volumes. 
\end{proof}

\begin{remark}\label{R:boundaryvol}\mbox{}
The weak convergence of boundary volumes serves as a convenient tool: The common scaling exponent $s$ and constant $\bar c_{s}$ guarantee that also the limit domain $\Omega$ will have a boundary $\partial\Omega$ with Hausdorff dimension less or equal $s<n$, so that in particular $\lambda^n(\partial \Omega)=0$. This is used to conclude the convergence in the sense of characteristic functions, see \cite[Theorems 2 and 3]{HINZ-2020}. The weak convergence of measures alone does not preserve simple bounds on the Hausdorff dimension of their supports. A bound similar to (\ref{E:lowreg-w}) was also used in \cite[formula (1.6)]{BucurGiacominiTrebeschi2016}.
\end{remark}

Combining Theorem \ref{T:Mosco} with Theorem \ref{T:compact}, we immediately obtain a compactness result for the energy functionals defined in (\ref{extJ--}).

\begin{theorem}\label{T:Moscocompact}
Let $D_0\subset D \subset \mathbb{R}^n$ be non-empty  bounded Lipschitz domains, $\varepsilon>0$, $n-1\leq s<n$, $\bar c\geq \bar c_{s}>0$, $n-2< d\leq n$ and $ c_d>0$. Let $\alpha\geq 0$ , let $\gamma\in C(\overline{D})$ be nonnegative, and let Assumption \ref{A:alphagamma} be satisfied.

Given a sequence $((\Omega_{m},\nu_m,\mu_m))_m\subset U_{ad}(D,D_0,\varepsilon, s, \bar c_{s}, \bar c, d, c_d)$, there are a sequence $(m_k)_k$ with $m_k\uparrow +\infty$ and an admissible triple $(\Omega,\nu,\mu)\subset U_{ad}(D,D_0,\varepsilon, s, \bar c_{s}, \bar c, d, c_d)$ such that 
\[\lim_{k\to \infty} J(\Omega_{m_k},\mu_{m_k})=J(\Omega,\mu)\]
in the sense of Mosco and in the Gamma-sense. The corresponding resolvent operators, spectral projectors, eigenvalues and eigenfunctions converge as stated in Theorem \ref{T:normresconv}.
\end{theorem}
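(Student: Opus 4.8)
The plan is to deduce the statement by combining the compactness of the admissible class (Theorem \ref{T:compact}) with the Mosco convergence result (Theorem \ref{T:Mosco}) and its spectral upgrade (Theorem \ref{T:normresconv}); the argument amounts to two successive extractions of subsequences together with a check that the hypotheses match up.

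First I would apply Theorem \ref{T:compact} (i): since the sequence $((\Omega_m,\nu_m,\mu_m))_m$ lies in the compact class $U_{ad}(D,D_0,\varepsilon, s, \bar c_{s}, \bar c, d, c_d)$, it has a subsequence $((\Omega_{m_j},\nu_{m_j},\mu_{m_j}))_j$ converging to some triple $(\Omega,\nu,\mu)$ in the same class, simultaneously in the Hausdorff sense, in the sense of characteristic functions, in the sense of compacts, and with $\nu_{m_j}\to\nu$ and $\mu_{m_j}\to\mu$ weakly. Passing through the boundary volumes $\nu_m$ is essential precisely here (cf. Remark \ref{R:boundaryvol}): the uniform parameters $s$ and $\bar c_{s}$ force $\lambda^n(\partial\Omega)=0$, which is what secures the convergence in the sense of characteristic functions; weak convergence of the trace volumes $\mu_m$ alone would not.

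Next I would verify that $((\Omega_{m_j},\mu_{m_j}))_j$ satisfies all the hypotheses of Theorem \ref{T:Mosco}. The $\Omega_{m_j}$ are $(\varepsilon,\infty)$-domains contained in $D$ converging to $\Omega$ in the Hausdorff sense and in the sense of characteristic functions; the $\mu_{m_j}$ are nonzero Borel measures satisfying \eqref{E:upperdreg} with the common constant $c_d$, supported in $\overline{\Omega}_{m_j}$, and converging weakly to $\mu$; $n-2<d\le n$ by hypothesis; and, under Assumption \ref{A:alphagamma} combined with $\gamma\in C(\overline D)$ nonnegative, $\gamma$ is either identically zero or (being continuous and bounded away from zero on the compact set $\overline D$) strictly positive, which is exactly the dichotomy permitted in Theorem \ref{T:Mosco}. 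Hence that theorem gives $\lim_j J(\Omega_{m_j},\mu_{m_j})=J(\Omega,\mu)$ in the Mosco sense and in the Gamma-sense. I would then record the routine fact that both Mosco and Gamma convergence are inherited by arbitrary subsequences, so the convergence survives any further thinning of $(m_j)_j$.

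Finally I would apply Theorem \ref{T:normresconv} to $((\Omega_{m_j},\mu_{m_j}))_j$ and its limit (the hypotheses are those just checked, together with Assumption \ref{A:alphagamma}); this produces a further subsequence $(m_k)_k$ along which the quasi-inverses converge in operator norm to $P_\Omega\circ\hat{G}_{\alpha,\gamma}^{\Omega,\mu,\ast}$, the spectral projectors $\pi_{(a,b)}(\Omega_{m_k},\mu_{m_k},\ast)$ converge in operator norm, and the eigenvalues and suitably chosen eigenfunctions of $-\mathcal{L}^{\Omega_{m_k},\mu_{m_k},\ast}_\gamma$ converge to those of $-\mathcal{L}^{\Omega,\mu,\ast}_\gamma$ exactly as in that theorem; relabelling yields the $(m_k)_k$ and the triple $(\Omega,\nu,\mu)\in U_{ad}(D,D_0,\varepsilon, s, \bar c_{s}, \bar c, d, c_d)$ of the statement, with the Mosco and Gamma convergence still in force along $(m_k)_k$ by the previous remark. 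I do not anticipate a genuine obstacle: the only points needing care are (a) that the limit furnished by Theorem \ref{T:compact} is a limit in precisely the strong sense (Hausdorff, characteristic functions, and weak convergence of the $\mu_m$ with a uniform upper-regularity constant) demanded by Theorem \ref{T:Mosco}, and (b) the bookkeeping of the two nested subsequence extractions, for which the stability of Mosco and Gamma convergence under passing to subsequences is what keeps it clean.
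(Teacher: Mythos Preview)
Your proposal is correct and follows essentially the same approach as the paper, which presents the theorem as an immediate combination of Theorem \ref{T:compact} with Theorem \ref{T:Mosco} (and Theorem \ref{T:normresconv} for the spectral statements). Your additional remarks on matching hypotheses and the stability of Mosco and Gamma convergence under passing to subsequences are accurate elaborations of what the paper leaves implicit.
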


Theorems \ref{T:compactnesspropvar} and \ref{T:Moscocompact} now imply a compactness result for the unique weak solutions of boundary value problems on varying domains.
\begin{corollary}\label{C:compactness}
Let the hypotheses of Theorem \ref{T:Moscocompact} be in force and let $f\in L^2(D)$. Given a sequence $((\Omega_{m},\nu_m,\mu_m))_m\subset U_{ad}(D,D_0,\varepsilon, s, \bar c_{s}, \bar c, d, c_d)$, there are a sequence $(m_k)_k$ with $m_k\uparrow +\infty$ and an admissible triple $(\Omega,\nu,\mu)\subset U_{ad}(D,D_0,\varepsilon, s, \bar c_{s}, \bar c, d, c_d)$ such that for
the continuations by zero $v_{m_k}$ of the unique weak solutions $u_{m_k}$
of (\ref{E:abstractDirichlet}) or (\ref{E:abstractRobin}) on $\Omega_{m_k}$ with zero boundary values we have 
\[\lim_{k\to \infty} v_{m_k}=v\quad \text{in $L^2(D)$},\]
where $v$ is the continuation by zero of the corresponding unique weak solution $u$ on $\Omega$. There is some $u^\ast\in W^{1,2}(D)$ with $u^\ast|_\Omega=u=v|_\Omega$ such that $\lim_{k\to \infty} E_{\Omega_{m_k}}u_{m_k}=u^\ast$ weakly in $W^{1,2}(D)$ and strongly in $W^{1,2}(\Omega)$. Moreover, we have 
\[\lim_{k\to \infty} J(\Omega_{m_k},\mu_{m_k})(u_{m_k})=J(\Omega,\mu)(u).\] 
\end{corollary}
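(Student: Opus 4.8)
The plan is to derive the corollary purely by concatenating two results already proved in the paper: the compactness of the admissible class from Theorem \ref{T:compact} and the stability statement of Theorem \ref{T:compactnesspropvar}, the latter applied to the constant sequence $g_m\equiv f$. First I would invoke Theorem \ref{T:compact} (i): since $((\Omega_m,\nu_m,\mu_m))_m$ lies in the compact class $U_{ad}(D,D_0,\varepsilon, s, \bar c_{s}, \bar c, d, c_d)$, there exist a subsequence, still indexed by $m_k$ with $m_k\uparrow+\infty$, and a triple $(\Omega,\nu,\mu)\in U_{ad}(D,D_0,\varepsilon, s, \bar c_{s}, \bar c, d, c_d)$ such that $\Omega_{m_k}\to\Omega$ in the Hausdorff sense, in the sense of characteristic functions and in the sense of compacts, and $\nu_{m_k}\to\nu$, $\mu_{m_k}\to\mu$ weakly. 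In particular $\Omega$ is an $(\varepsilon,\infty)$-domain with $D_0\subset\Omega\subset D$ and $\mu$ is a nonzero Borel measure satisfying \eqref{E:upperdreg} with constant $c_d$ and support $\Gamma=\supp\mu\subset\overline{\Omega}$, so that along this subsequence we are exactly in the setting of Theorems \ref{T:Mosco} and \ref{T:compactnesspropvar}; the standing conditions $n-2<d\leq n$ and the hypotheses on $\alpha$, $\gamma$ and $\ast$ are those inherited from Theorem \ref{T:Moscocompact}, and Assumption \ref{A:alphagamma} together with $\gamma\in C(\overline{D})$ nonnegative forces $\gamma$ to be either identically zero or strictly positive, as required there.

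Next I would apply Theorem \ref{T:compactnesspropvar} along this subsequence, taking the weakly convergent sequence to be the constant one $g_{m_k}=g=f$. This provides a further subsequence (not relabelled) along which part (i) yields
\[
\lim_{k\to\infty} P_{\Omega_{m_k}}\circ \hat{G}_{\alpha,\gamma}^{\Omega_{m_k},\mu_{m_k},\ast} f = P_\Omega\circ \hat{G}_{\alpha,\gamma}^{\Omega,\mu,\ast} f \quad \text{in } L^2(D),
\]
while part (ii) yields some $u^\ast\in W^{1,2}(D)$ with $u^\ast|_\Omega=u$, the weak convergence $E_{\Omega_{m_k}}u_{m_k}\to u^\ast$ in $W^{1,2}(D)$, the strong convergence $E_{\Omega_{m_k}}u_{m_k}\to u$ in $W^{1,2}(\Omega)$, and $J(\Omega_{m_k},\mu_{m_k})(u_{m_k})\to J(\Omega,\mu)(u)$.

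It then remains to translate the resolvent statement into the language of weak solutions, which is done via Proposition \ref{P:consistent}: for each $k$ the function $P_{\Omega_{m_k}}\circ \hat{G}_{\alpha,\gamma}^{\Omega_{m_k},\mu_{m_k},\ast} f$ is precisely the continuation by zero $v_{m_k}$ of the unique weak solution $u_{m_k}$ of \eqref{E:abstractDirichlet} or \eqref{E:abstractRobin} on $\Omega_{m_k}$ with zero boundary values and right-hand side $f$, and likewise $P_\Omega\circ \hat{G}_{\alpha,\gamma}^{\Omega,\mu,\ast} f=v$, the continuation by zero of the corresponding solution $u$ on $\Omega$. Hence the displayed limit reads $\lim_{k\to\infty}v_{m_k}=v$ in $L^2(D)$, and the three remaining assertions of the corollary are exactly the conclusions of Theorem \ref{T:compactnesspropvar} (ii) recorded above.

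Since the whole argument is just the concatenation of two already-established results, I do not expect any genuine obstacle. The only point that requires a moment's care — and which I would make explicit — is checking that the hypotheses of Theorems \ref{T:Mosco}, \ref{T:compactnesspropvar} and \ref{T:compact} are all subsumed by those assumed in Theorem \ref{T:Moscocompact} (and inherited here), in particular that the limit triple $(\Omega,\nu,\mu)$ indeed stays inside the same parametrized class $U_{ad}(D,D_0,\varepsilon, s, \bar c_{s}, \bar c, d, c_d)$, which is precisely what Theorem \ref{T:compact} (i) guarantees.
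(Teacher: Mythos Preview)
Your proposal is correct and follows essentially the same route as the paper, which states that the corollary is obtained by combining Theorems \ref{T:compactnesspropvar} and \ref{T:Moscocompact}; since Theorem \ref{T:Moscocompact} itself is just Theorem \ref{T:compact} plus Theorem \ref{T:Mosco}, your explicit unpacking via Theorem \ref{T:compact} and then Theorem \ref{T:compactnesspropvar} with $g_{m_k}\equiv f$ amounts to the same argument.
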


\section{Existence of optimal shapes}\label{S:opti}

From Corollary \ref{C:compactness} we can conclude the existence of optimal shapes minimizing the energy (\ref{extJ--}) within a given class of shape admissible triples. 
\begin{theorem}\label{T:opti}
Let $D_0\subset D \subset \mathbb{R}^n$ be non-empty  bounded Lipschitz domains, $\varepsilon>0$, $n-1\leq s<n$, $\bar c\geq \bar c_{s}>0$, $n-2< d\leq n$ and $ c_d>0$.  Let $\alpha\geq 0$ , let $\gamma\in C(\overline{D})$ be nonnegative, and let Assumption \ref{A:alphagamma} be satisfied.

There is a shape admissible triple $(\Omega_{opt},\nu_{opt},\mu_{opt})\in U_{ad}(D,D_0,\varepsilon, s, \bar c_{s}, \bar c, d, c_d)$ such that 
\[J(\Omega_{opt},\mu_{opt})(u(\Omega_{opt},\mu_{opt}))=\min_{(\Omega,\nu,\mu)\in U_{ad}(D,D_0,\varepsilon, s, \bar c_{s}, \bar c, d, c_d)} J(\Omega,\mu)(u(\Omega,\mu)), \]
where $u(\Omega,\mu)$ denotes the unique weak solution to (\ref{E:abstractDirichlet}) respectively (\ref{E:abstractRobin}) on 
$(\Omega,\nu,\mu)$ with data $\alpha$, $\gamma$, $f$ and $\varphi\equiv 0$.

Moreover, $(\Omega_{opt}, \nu_{opt}, \mu_{opt})$ is the limit of a minimizing sequence 
\[((\Omega_{m},\nu_m,\mu_m))_m\subset U_{ad}(D,D_0,\varepsilon, s, \bar c_{s}, \bar c, d, c_d)\] 
in the Hausdorff sense, the sense of compacts, the sense of characteristic functions and in the sense of weak convergence of the boundary volumes and the trace volumes. There is some $u^\ast\in W^{1,2}(D)$ with $u^\ast|_\Omega=u(\Omega_{opt},\mu_{opt})$ such that $\lim_{m\to \infty} E_{\Omega_{m}}u_{m}=u^\ast$ weakly in $W^{1,2}(D)$ and strongly in $W^{1,2}(\Omega)$.
\end{theorem}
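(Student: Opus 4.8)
The plan is to obtain the optimal shape as a limit of a minimizing sequence, using the compactness result Corollary \ref{C:compactness} as the main tool. First I would argue that the infimum
\[
I:=\inf_{(\Omega,\nu,\mu)\in U_{ad}(D,D_0,\varepsilon, s, \bar c_{s}, \bar c, d, c_d)} J(\Omega,\mu)(u(\Omega,\mu))
\]
is finite and nonnegative: the class is nonempty (it contains $D_0$ equipped with suitable measures, e.g.\ $\mathcal{H}^{n-1}|_{\partial D_0}$ as a boundary volume and, say, the same measure as a trace volume provided $d$ and the constants are chosen admissibly), and $J(\Omega,\mu)(u(\Omega,\mu))=\mathcal{E}_{\alpha,\gamma}^{\Omega,\mu}(u(\Omega,\mu))\ge 0$, so $0\le I<+\infty$. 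Next I pick a minimizing sequence $((\Omega_m,\nu_m,\mu_m))_m\subset U_{ad}(D,D_0,\varepsilon, s, \bar c_{s}, \bar c, d, c_d)$, i.e.\ with $\lim_m J(\Omega_m,\mu_m)(u(\Omega_m,\mu_m))=I$, where $u(\Omega_m,\mu_m)$ is the unique weak solution on $\Omega_m$ with data $\alpha,\gamma,f$ and $\varphi\equiv 0$ (existence and uniqueness are guaranteed by Propositions \ref{P:exDirichlet} and \ref{P:exRobin} together with Assumption \ref{A:alphagamma}).

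The core step is to apply Corollary \ref{C:compactness} with $g=f$ to this minimizing sequence: it produces a subsequence $(m_k)_k$ with $m_k\uparrow+\infty$ and an admissible triple $(\Omega_{opt},\nu_{opt},\mu_{opt})\in U_{ad}(D,D_0,\varepsilon, s, \bar c_{s}, \bar c, d, c_d)$ such that, writing $v_{m_k}$ and $v$ for the continuations by zero of $u_{m_k}:=u(\Omega_{m_k},\mu_{m_k})$ and $u:=u(\Omega_{opt},\mu_{opt})$, we have $\lim_k v_{m_k}=v$ in $L^2(D)$, there is some $u^\ast\in W^{1,2}(D)$ with $u^\ast|_{\Omega_{opt}}=u=v|_{\Omega_{opt}}$ and $\lim_k E_{\Omega_{m_k}}u_{m_k}=u^\ast$ weakly in $W^{1,2}(D)$ and strongly in $W^{1,2}(\Omega_{opt})$, and
\[
\lim_{k\to\infty} J(\Omega_{m_k},\mu_{m_k})(u_{m_k})=J(\Omega_{opt},\mu_{opt})(u).
\]
Since the left-hand side equals $I$ (it is a subsequence of a sequence converging to $I$), we get $J(\Omega_{opt},\mu_{opt})(u(\Omega_{opt},\mu_{opt}))=I$, which is exactly the claimed minimality because $(\Omega_{opt},\nu_{opt},\mu_{opt})$ lies in the class so that $I\le J(\Omega_{opt},\mu_{opt})(u(\Omega_{opt},\mu_{opt}))$ trivially, giving equality; the infimum is therefore attained and is in fact a minimum. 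The convergence statements for the subsequence $((\Omega_{m_k},\nu_{m_k},\mu_{m_k}))_k$ in the Hausdorff sense, the sense of compacts, the sense of characteristic functions and the weak convergence of boundary and trace volumes are inherited from the compactness in Theorem \ref{T:compact} (i), which underlies Corollary \ref{C:compactness}; relabelling $((\Omega_{m_k},\nu_{m_k},\mu_{m_k}))_k$ as $((\Omega_m,\nu_m,\mu_m))_m$ gives the asserted minimizing sequence and all its convergence properties, including $\lim_m E_{\Omega_m}u_m=u^\ast$ weakly in $W^{1,2}(D)$ and strongly in $W^{1,2}(\Omega_{opt})$.

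The only genuine point requiring care — and the step I expect to be the main (albeit mild) obstacle — is verifying that the minimizing sequence can indeed be fed into Corollary \ref{C:compactness}, i.e.\ that all the hypotheses of Theorem \ref{T:Moscocompact} are met along it: that each $\Omega_m$ is an $(\varepsilon,\infty)$-domain with the common $\varepsilon$, that the $\mu_m$ satisfy \eqref{E:upperdreg} with the common constant $c_d$ and $n-2<d\le n$, that the $\nu_m$ satisfy \eqref{E:lowreg-w} and \eqref{E:totalmass} with the common constants, and that $\gamma\in C(\overline{D})$ together with Assumption \ref{A:alphagamma}. All of these are built into membership in $U_{ad}(D,D_0,\varepsilon, s, \bar c_{s}, \bar c, d, c_d)$ and the standing hypotheses of the theorem, so this is a bookkeeping check rather than a substantive difficulty; once it is in place, the existence of the optimal shape is an immediate consequence of the compactness and the Mosco-type energy convergence already established.
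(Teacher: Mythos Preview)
Your proof is correct and follows essentially the same approach as the paper: take a minimizing sequence for the nonnegative functional $(\Omega,\nu,\mu)\mapsto J(\Omega,\mu)(u(\Omega,\mu))$ and apply Corollary \ref{C:compactness} to it. The paper's proof is in fact just these two sentences, and your additional detail (finiteness of the infimum, verification of hypotheses, relabelling the subsequence) is routine bookkeeping that the paper leaves implicit.
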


\begin{proof}
Let $((\Omega_m,\nu_m,\mu_m))_m\subset U_{ad}(D,D_0,\varepsilon, s, \bar c_{s}, \bar c, d, c_d)$ be a minimizing sequence for the nonnegative functional $(\Omega,\nu,\mu)\mapsto J(\Omega,\mu)(u(\Omega,\mu))$. The result follows from an application of Corollary \ref{C:compactness} to $((\Omega_m,\nu_m,\mu_m))_m$.
\end{proof}

We formulate a second version of this result, now for boundary value problems in the more narrow sense. Let $U_{ad}'(D,D_0,\varepsilon, s, \bar c_{s}, \bar c, d, c_d)$ be the collection of all shape admissible triples of form $(\Omega,\mu,\mu)$ in $U_{ad}(D,D_0,\varepsilon, s, \bar c_{s}, \bar c, d, c_d)$, i.e. for which boundary and trace volumes agree and in particular, 
$\supp\mu=\partial \Omega$. This is the class defined in \cite[Definition 2]{HINZ-2020}, just with different notation.

\begin{theorem}\label{T:optimu}
Let $D_0\subset D \subset \mathbb{R}^n$ be non-empty  bounded Lipschitz domains, $\varepsilon>0$, $\bar c\geq \bar c_{s}>0$,
\begin{equation}\label{E:specialcase}
n-1\leq s < n\quad \text{and}\quad n-2 \leq  d \leq s.
\end{equation}
Let $\alpha\geq 0$ , let $\gamma\in C(\overline{D})$ be nonnegative, and let Assumption \ref{A:alphagamma} be satisfied. 

There is a shape admissible triple $(\Omega_{opt},\mu_{opt},\mu_{opt})\in U_{ad}'(D,D_0,\varepsilon, s, \bar c_{s}, \bar c, d, c_d)$ such that 
\[J(\Omega_{opt},\mu_{opt})(u(\Omega_{opt},\mu_{opt}))=\min_{(\Omega,\mu,\mu)\in U_{ad}'(D,D_0,\varepsilon, s, \bar c_{s}, \bar c, d, c_d)} J(\Omega,\mu)(u(\Omega,\mu)), \]
where $u(\Omega,\mu)$ denotes the unique weak solution to (\ref{E:abstractDirichlet}) respectively (\ref{E:abstractRobin}) on 
$(\Omega,\nu,\mu)$ with data $\alpha$, $\gamma$, $f$ and $\varphi\equiv 0$.

Moreover, $(\Omega_{opt}, \mu_{opt}, \mu_{opt})$ is the limit of a minimizing sequence 
\[((\Omega_{m},\mu_m,\mu_m))_m\subset U_{ad}'(D,D_0,\varepsilon, s, \bar c_{s}, \bar c, d, c_d)\] 
in the Hausdorff sense, the sense of compacts, the sense of characteristic functions and in the sense of weak convergence of the boundary volumes resp. trace volumes. There is some $u^\ast\in W^{1,2}(D)$ with $u^\ast|_\Omega=u(\Omega_{opt},\mu_{opt})$ such that $\lim_{m\to \infty} E_{\Omega_{m}}u_{m}=u^\ast$ weakly in $W^{1,2}(D)$ and strongly in $W^{1,2}(\Omega)$.
\end{theorem}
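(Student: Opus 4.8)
The plan is to reduce Theorem~\ref{T:optimu} to Theorem~\ref{T:opti} by observing that $U_{ad}'(D,D_0,\varepsilon, s, \bar c_{s}, \bar c, d, c_d)$ is a well-behaved subclass of $U_{ad}(D,D_0,\varepsilon, s, \bar c_{s}, \bar c, d, c_d)$ which is \emph{closed} under the various modes of convergence appearing in the compactness statement Theorem~\ref{T:compact}. First I would check that under the hypothesis \eqref{E:specialcase} the inclusion $U_{ad}'\subset U_{ad}$ is indeed valid: if $(\Omega,\mu,\mu)$ has $\supp\mu=\partial\Omega$ and $\mu$ satisfies \eqref{E:upperdreg} with exponent $d$, then since $d\leq s$ the same measure also satisfies the lower regularity bound \eqref{E:lowreg-w} — more precisely one wants $\mu(\overline{B(x,r)})\geq \bar c_s r^s$ for $x\in\partial\Omega$, which follows from lower $s$-regularity of $\mu$; here one invokes that the triples in \cite[Definition 2]{HINZ-2020} carry exactly such a lower bound, so the parameters $s,\bar c_s,\bar c$ can be chosen so that $U_{ad}'$ is nonempty and genuinely sits inside $U_{ad}$. (In fact, as noted in the text after Theorem~\ref{T:compact}, the total mass bound \eqref{E:totalmass} is then automatic from \eqref{E:upperdreg}.)

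Next I would run the minimization argument exactly as in the proof of Theorem~\ref{T:opti}. Let $((\Omega_m,\mu_m,\mu_m))_m\subset U_{ad}'(D,D_0,\varepsilon, s, \bar c_{s}, \bar c, d, c_d)$ be a minimizing sequence for $(\Omega,\nu,\mu)\mapsto J(\Omega,\mu)(u(\Omega,\mu))$, which is a legitimate sequence in the larger class $U_{ad}$. Applying Corollary~\ref{C:compactness} (equivalently, Theorem~\ref{T:compact} together with Theorem~\ref{T:Moscocompact}), we extract a subsequence $(m_k)_k$ converging in the Hausdorff sense, the sense of compacts, the sense of characteristic functions, and in the sense of weak convergence of the boundary volumes $\nu_{m_k}=\mu_{m_k}$ and the trace volumes $\mu_{m_k}$ to a triple $(\Omega_{opt},\nu_{opt},\mu_{opt})\in U_{ad}$; moreover the continuations by zero of the weak solutions $u_{m_k}$ converge in $L^2(D)$ to the continuation by zero of $u(\Omega_{opt},\mu_{opt})$, the extensions $E_{\Omega_{m_k}}u_{m_k}$ converge weakly in $W^{1,2}(D)$ and strongly in $W^{1,2}(\Omega_{opt})$ to some $u^\ast$ with $u^\ast|_{\Omega_{opt}}=u(\Omega_{opt},\mu_{opt})$, and $J(\Omega_{m_k},\mu_{m_k})(u_{m_k})\to J(\Omega_{opt},\mu_{opt})(u(\Omega_{opt},\mu_{opt}))$. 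Since the sequence was minimizing over $U_{ad}'$, the last display identifies $J(\Omega_{opt},\mu_{opt})(u(\Omega_{opt},\mu_{opt}))$ with the infimum over $U_{ad}'$.

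It remains to verify that the limiting triple belongs to the smaller class $U_{ad}'$, i.e. that $\nu_{opt}=\mu_{opt}$ and $\supp\mu_{opt}=\partial\Omega_{opt}$. For the first point: along the subsequence the sequences of boundary volumes $\nu_{m_k}$ and of trace volumes $\mu_{m_k}$ are literally the same sequence of measures, so their weak limits coincide, giving $\nu_{opt}=\mu_{opt}$. For the second point: by Definition~\ref{DefShapeAdmis}(ii) we have $\supp\nu_{opt}=\partial\Omega_{opt}$, and since $\nu_{opt}=\mu_{opt}$ this gives $\supp\mu_{opt}=\partial\Omega_{opt}$ as required. Finally, $\mu_{opt}$ satisfies both \eqref{E:upperdreg} with exponent $d$ (being a trace volume) and \eqref{E:lowreg-w} with exponent $s$ (being a boundary volume), consistently with \eqref{E:specialcase}, so $(\Omega_{opt},\mu_{opt},\mu_{opt})\in U_{ad}'(D,D_0,\varepsilon, s, \bar c_{s}, \bar c, d, c_d)$. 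The convergence statements for the minimizing sequence and for the extensions are inherited verbatim from Corollary~\ref{C:compactness}.

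The only genuine subtlety — the part I would be most careful about — is the stability of the identity $\supp\mu=\partial\Omega$ under passage to the limit, i.e. making sure that Theorem~\ref{T:compact}(i), when applied to triples in $U_{ad}'$, really returns $\nu_{opt}$ with support equal to $\partial\Omega_{opt}$ and not merely a measure supported in $\overline{\Omega_{opt}}$. This is precisely what part (ii) of Theorem~\ref{T:compact} and the role of boundary volumes explained in Remark~\ref{R:boundaryvol} are designed to deliver: the common exponent $s<n$ forces $\lambda^n(\partial\Omega_{opt})=0$ and pins the support of the weak limit of the $\nu_{m_k}$ to $\partial\Omega_{opt}$. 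Given that machinery, the reduction is routine; without it, one would have to redo the support-localization argument from \cite[Theorems 2 and 3]{HINZ-2020} in the present setting.
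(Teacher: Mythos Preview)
Your proposal is correct and follows essentially the same route the paper takes (the paper gives no explicit proof of Theorem~\ref{T:optimu}, relying implicitly on the argument for Theorem~\ref{T:opti} together with the compactness of $U_{ad}'$ from \cite[Theorem~3]{HINZ-2020}). One small simplification: the inclusion $U_{ad}'\subset U_{ad}$ holds \emph{by definition} in the paper (``the collection of all shape admissible triples of form $(\Omega,\mu,\mu)$ in $U_{ad}$''), so your first paragraph's attempt to derive lower $s$-regularity from upper $d$-regularity is unnecessary and, as you partly noticed, not the right implication anyway---membership in $U_{ad}'$ already packages both bounds on the single measure $\mu$.
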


\begin{remark}\label{R:boundarycase}
Theorem \ref{T:optimu} provides a quite general result for boundary value problems in the more narrow sense: Suppose that we wish to minimize the functional defined by 
\[J(\Omega,\mu)(v)=\int_\Omega|\nabla v|^2\:dx+ \alpha\int_\Omega v^2\:dx+\int_{\partial\Omega}(\mathrm{Tr}_{\Omega,\Gamma}v)^2\:d\mu,\quad v\in W^{1,2}(\Omega),\]
and $J(\Omega,\mu)(v)=+\infty$ if $v\in L^2(D)\setminus W^{1,2}(\Omega)$. The triples in $U_{ad}'(D,D_0,\varepsilon, s, \bar c_{s}, \bar c, d, c_d)$ then are actually pairs $(\Omega,\mu)$ of bounded $(\varepsilon,\infty)$-domains $\Omega$ and Borel measures $\mu$ such that we have 
\[\mu(B(x,r))\leq c_d\:r^d\quad\text{and}\quad \mu(\overline{B(x,r)})\geq \bar c_s\:r^s,\quad x\in \partial\Omega,\quad 0<r\leq 1,\]
with (\ref{E:specialcase}), see \cite[Definition 2]{HINZ-2020}. The class of domains $\Omega$ for which this is possible is much larger than any class of domains with countably rectifiable boundaries and finite $(n-1)$-dimensional Hausdorff measure. The boundary $\partial\Omega$ could potentially have any Hausdorff dimension in $[n-1,n)$.
\end{remark}

\appendix

\newcommand{\etalchar}[1]{$^{#1}$}


\begin{thebibliography}{MNORP21}

\bibitem[AH96]{AH96}
D.R. Adams and L.I. Hedberg.
\newblock {\em Function spaces and potential theory}, volume 314 of {\em
  Grundlehren der Mathematischen Wissenschaften [Fundamental Principles of
  Mathematical Sciences]}.
\newblock Springer-Verlag, Berlin, 1996.

\bibitem[AHM{\etalchar{+}}17]{AHMNT}
J.~Azzam, S.~Hofmann, J.M. Martell, K.~Nystr\"{o}m, and T.~Toro.
\newblock A new characterization of chord-arc domains.
\newblock {\em J. Eur. Math. Soc. (JEMS)}, 19(4):967--981, 2017.

\bibitem[AW03]{ARENDT-WARMA2003}
{W}. {A}rendt and {M}. {W}arma.
\newblock {T}he {L}aplacian with {R}obin boundary conditions on arbitrary
  domains.
\newblock {\em {P}ot. {A}nal.}, 19:341--363, 2003.

\bibitem[BC07]{BoulkhemairChakib2007}
A.~Boulkhemair and A.~Chakib.
\newblock On the uniform {P}oincar\'e inequality.
\newblock {\em Comm. Partial Diff. Eq.}, 32(9):1439--1447, 2007.

\bibitem[BG15]{BucurGiacomini2015}
D.~Bucur and A.~Giacomini.
\newblock Faber-{K}rahn inequalities for the {R}obin-{L}aplacian: a free
  discontinuity approach.
\newblock {\em Arch. Ration. Mech. Anal.}, 218(2):757--824, 2015.

\bibitem[BG16]{BucurGiacomini2016}
D.~Bucur and A.~Giacomini.
\newblock Shape optimization problems with {R}obin conditions on the free
  boundary.
\newblock {\em Ann. Inst. H. Poincar\'e Anal. Non Lin\'eaire}, 33:1539--1568,
  2016.

\bibitem[BGT16]{BucurGiacominiTrebeschi2016}
D.~Bucur, A.~Giacomini, and P.~Trebeschi.
\newblock The {R}obin-{L}aplacian problem on varying domains.
\newblock {\em Calc. Var. PDE}, 55:133, 2016.

\bibitem[BGT19]{BucurGiacominiTrebeschi2019}
D.~Bucur, A.~Giacomini, and P.~Trebeschi.
\newblock Best constant in {P}oincar\'e inequalities with traces: A free
  discontinuity approach.
\newblock {\em Ann. de l'institute H. Poincar\'e C, Anal. non lin\'eaire},
  36(7):1959--1986, 2019.

\bibitem[BH91]{BH91}
N.~Bouleau and F.~Hirsch.
\newblock {\em Dirichlet {F}orms and {A}nalysis on {W}iener {S}pace}, volume~14
  of {\em deGruyter Studies in Math.}
\newblock deGruyter, Berlin, 1991.

\bibitem[Bie09]{Biegert2009}
M.~Biegert.
\newblock {O}n traces of {S}obolev functions on the boundary of extension
  domains.
\newblock {\em Proc. Amer. Math. Soc.}, 137(12):4169--4176, 2009.

\bibitem[{B}ra06]{Braides}
{A}. {B}raides.
\newblock A handbook of {$\Gamma$}-convergence.
\newblock In M.~{C}hipot and P.~{Q}uittner, editors, {\em {H}andbook of
  {D}ifferential {E}quations: {S}tationary {P}artial {D}ifferential
  {E}quations}, volume~3, chapter~2, pages 101--213. Elsevier, Amsterdam, 2006.

\bibitem[Buc05]{BB2005}
G.~Bucur, D.~Buttazzo.
\newblock {\em Variational methods in shape optimization problems}, volume~65
  of {\em Progress in Nonlinear Differential Equations and their Applications}.
\newblock Birkh\"auser, Boston, 2005.

\bibitem[{C}ap10a]{CAPITANELLI-2010}
{R}. {C}apitanelli.
\newblock {A}symptotics for mixed {D}irichlet-{R}obin problems in irregular
  domains.
\newblock {\em {J}. of {M}ath. {A}nal. and {A}ppl.}, 362(2):450--459, {F}eb
  2010.

\bibitem[{C}ap10b]{CAPITANELLI-2010-1}
{R}. {C}apitanelli.
\newblock {R}obin boundary condition on scale irregular fractals.
\newblock {\em {C}omm. {P}ure and {A}ppl. {A}nal.}, 9(5):1221--1234, {M}ay
  2010.

\bibitem[{D}an00]{DANERS-2000}
{D}. {D}aners.
\newblock Robin boundary value problems on arbitrary domains.
\newblock {\em {T}rans. {A}mer. {M}ath. {S}oc.}, 352(09):4207--4237, {S}ep
  2000.

\bibitem[Dan06]{Daners2006}
D.~Daners.
\newblock A {F}aber-{K}rahn inequality for {R}obin problems in any space
  dimension.
\newblock {\em Math. Ann.}, 335(4):767--785, 2006.

\bibitem[DD97]{DancerDaners1997}
E.N. Dancer and D.~Daners.
\newblock Domain perturbation for elliptic equations subject to {R}obin
  boundary conditions.
\newblock {\em J. Differ. Equ.}, 138(1):86--132, 1997.

\bibitem[DHRPT21]{DHRPT-2021}
{A}. {D}ekkers, {M}. {H}inz, {A}. {R}ozanova {P}ierrat, and {A}. {T}eplyaev.
\newblock {O}ptimal absorption of ultrasound by reflective or isolating
  boundary.
\newblock {\em {P}reprint}, 2021.

\bibitem[DL93]{Dutray-Lions-V6}
R.~Dutray and J.-L. Lions.
\newblock {\em Mathematical {A}nalysis and {N}umerical {M}ethods for {S}cience
  and {T}echnology, {V}olume 6, {E}volution {P}roblems II}.
\newblock Springer, Berlin, 1993.

\bibitem[DM93]{DALMASO-1993}
G.~Dal~Maso.
\newblock {\em An {I}ntroduction to {$\Gamma$}-convergence}, volume~8 of {\em
  Progress in Nonlinear Differential Equations and their Applications}.
\newblock Birkh\"auser, Boston, 1993.

\bibitem[DRPT21]{DRPT-2020}
{A}. {D}ekkers, {A}. {R}ozanova {P}ierrat, and {A}. {T}eplyaev.
\newblock {M}ixed boundary valued problem for linear and nonlinear wave
  equations in domains with fractal boundaries.
\newblock {\em To appear in Calculus of Variations and Partial Differential
  Equations, hal-02514311}, 2021.

\bibitem[EHDR15]{Egert2015}
M.~Egert, R.~Haller-Dintelmann, and J.~Rehberg.
\newblock {H}ardy's inequality for functions vanishing on a part of the
  boundary.
\newblock {\em Pot. Anal.}, 43:49--78, 2015.

\bibitem[{E}va94]{EVANS-1994}
{L}.{C}. {E}vans.
\newblock {\em {P}artial {D}ifferential {E}quations}.
\newblock {G}raduate {S}tudies in {M}athematics, 1994.

\bibitem[{F}al90]{FALCONER}
{K}.~{J}. {F}alconer.
\newblock {\em {F}ractal {G}eometry - {M}athematical {F}oundations and
  {A}pplications}.
\newblock John Wiley and Sons, Chichester, 1990.

\bibitem[FOT94]{FOT94}
M.~Fukushima, Y.~Oshima, and M.~Takeda.
\newblock {\em Dirichlet {F}orms and {S}ymmetric {M}arkov {P}rocesses}.
\newblock deGruyter, Berlin, New York, 1994.

\bibitem[Fug71]{Fuglede71}
B.~Fuglede.
\newblock The quasi topology associated with a countably subadditive set
  function.
\newblock {\em Ann. de l'institute Fourier}, 21(1):123--169, 1971.

\bibitem[GR86]{GIRAULT-1986}
{V}. {G}irault and {P}.-{A}. {R}aviart.
\newblock {\em {F}inite {E}lement {M}ethods for the {N}avier-{S}tokes
  {E}quations, {T}heory and {A}lgorithms}.
\newblock {S}pringer, {N}ew {Y}ork, 1986.

\bibitem[HMRP{\etalchar{+}}21]{HMR-PRT2021}
M.~Hinz, F.~Magoul\`es, A.~Rozanova-Pierrat, M.~Rynkovskaya, and A.~Teplyaev.
\newblock On the existence of optimal shapes in architecture.
\newblock {\em Appl. Math. Model.}, 94:676--687, 2021.

\bibitem[HP18]{HENROT-e}
A.~Henrot and M.~Pierre.
\newblock {\em Shape variation and optimization}, volume~28 of {\em EMS Tracts
  in Mathematics}.
\newblock European Mathematical Society (EMS), Z\"{u}rich, 2018.
\newblock English version of the French publication with additions and updates.

\bibitem[HRPT21]{HINZ-2020}
{M}. {H}inz, {A}. {R}ozanova {P}ierrat, and {A}. {T}eplyaev.
\newblock {N}on-{L}ipschitz uniform domain shape optimization in linear
  acoustics.
\newblock {\em SIAM {J}. {C}ontrol {O}ptim.}, 59(2):1007--1032, 2021.

\bibitem[{J}on81]{JONES-1981}
{P}.{W}. {J}ones.
\newblock {Q}uasi onformal mappings and extendability of functions in {S}obolev
  spaces.
\newblock {\em {A}cta {M}athematica}, 147(1):71--88, {D}ec 1981.

\bibitem[{J}on94]{JONSSON-1994}
{A}. {J}onsson.
\newblock {B}esov spaces on closed subsets of $\mathbb{{R}}\sp n$.
\newblock {\em {T}ransactions of the {A}merican {M}athematical {S}ociety},
  341(1):355--370, {J}an 1994.

\bibitem[{J}on09]{JONSSON-2009}
{A}. {J}onsson.
\newblock {B}esov spaces on closed sets by means of atomic decomposition.
\newblock {\em {C}omplex {V}ariables and {E}lliptic {E}quations},
  54(6):585--611, {J}un 2009.

\bibitem[JW84]{JONSSON-1984}
{A}. {J}onsson and {H}. {W}allin.
\newblock {\em {F}unction spaces on subsets of $\mathbb{{R}}^n$}.
\newblock {M}ath. {R}eports 2, {P}art 1, {H}arwood {A}cad. {P}ubl. {L}ondon,
  1984.

\bibitem[Kat95]{Kato-1980}
T.~Kato.
\newblock {\em Perturbation {T}heory for {L}inear {O}perators}.
\newblock Springer, Berlin, 1995.
\newblock {R}eprint of 1980 edition.

\bibitem[{M}at95]{MATTILA}
P.~{M}attila.
\newblock {\em {G}eometry of {S}ets and {M}easures in {E}uclidean {S}paces -
  {F}ractals and {R}ectifiability}.
\newblock Cambridge University Press, Cambridge, 1995.

\bibitem[{M}az85]{MAZ'JA-1985}
{V}. {M}az’ja.
\newblock {\em {S}obolev {S}paces}.
\newblock {S}pringer {S}er. {S}ov. {M}ath., {S}pringer-{V}erlag, {B}erlin,,
  1985.

\bibitem[MNORP21]{MAGOULES-2020}
{F}. {M}agoul{\`e}s, {T}. {P}.~{K}. {N}guyen, {P}. {O}mn{\`e}s, and {A}.
  {R}ozanova {P}ierrat.
\newblock {O}ptimal absorption of acoustical waves by a boundary.
\newblock {\em SIAM J. Control Optim.}, 59(1):561--583, 2021.

\bibitem[Mos94]{MOSCO94}
U.~Mosco.
\newblock Composite media and asymptotic {D}irichlet forms.
\newblock {\em J. Funct. Anal.}, 123:368--421, 1994.

\bibitem[{N}ys94]{NYSTROM-1994}
{K}. {N}ystr{\"o}m.
\newblock {\em {S}moothness properties of solutions to {D}irichlet problems in
  domains with a fractal boundary}.
\newblock {D}octoral {T}hesis, {U}niversity of {U}me{\"a}, {U}me{\"a}, 1994.

\bibitem[{N}ys96]{NYSTROM-1996}
{K}. {N}ystr{\"o}m.
\newblock {I}ntegrability of {G}reen potentials in fractal domains.
\newblock {\em {A}rkiv f{\"o}r {M}atematik}, 34(2):335--381, {O}ct 1996.

\bibitem[Pos12]{Post-2012}
O.~Post.
\newblock {\em Spectral {A}nalysis on {G}raph-like {S}paces}, volume 2039 of
  {\em Lecture {N}otes {M}ath.}
\newblock Springer, Berlin, 2012.

\bibitem[Rog06]{Rogers}
L.G. Rogers.
\newblock Degree-independent {S}obolev extension on locally uniform domains.
\newblock {\em J. Funct. Anal.}, 235(2):619--665, 2006.

\bibitem[RS80]{REED-SIMON-1980}
M.~Reed and B.~Simon.
\newblock {\em Methods of {M}odern {M}athematical {P}hysics, {I}: {F}unctional
  {A}nalysis}.
\newblock Academic Press, San Diego, 1980.

\bibitem[Rui12]{Ruiz2012}
D.~Ruiz.
\newblock On the uniformity of the constant in the {P}oincar\'e inequality.
\newblock {\em Adv. Nonlin. Studies}, 12(4):889--903, 2012.

\bibitem[Shi21]{shibahara2021gromov}
Hyogo Shibahara.
\newblock Gromov-Hausdorff distance with boundary and its application to Gromov
  hyperbolic spaces and uniform spaces.
\newblock {\em arXiv:2108.03626}, 2021.

\bibitem[Tho15]{Thomas2015}
M.~Thomas.
\newblock Uniform {P}oincar\'e-{S}obolev and isoperimetric inequalities for
  classes of domains.
\newblock {\em Disc. Cont. Dyn. Syst.}, 35(6):2741--2761, 2015.

\bibitem[Tri78]{Triebel78}
H.~Triebel.
\newblock {\em {I}nterpolation {T}heory, {F}unction {S}paces, {D}ifferential
  {O}perators}, volume~18 of {\em {N}orth-{H}olland {M}athematical {L}ibrary}.
\newblock North-Holland, Amsterdam, 1978.

\bibitem[{T}ri97]{TRIEBEL-1997}
{H}. {T}riebel.
\newblock {\em {F}ractals and {S}pectra. {R}elated to {F}ourier {A}nalysis and
  {F}unction {S}paces}.
\newblock {B}irkh{\"a}user, 1997.

\bibitem[Tri02]{Triebel2002}
H.~Triebel.
\newblock {F}unction spaces in {L}ipschitz domains and on {L}ipschitz
  manifolds. {C}haracteristic functions as pointwise multipliers.
\newblock {\em Rev. Mat. Complut.}, 15(2):475--524, 2002.

\bibitem[VS15]{VS15}
Alejandro V\'{e}lez-Santiago.
\newblock Global regularity for a class of quasi-linear local and nonlocal
  elliptic equations on extension domains.
\newblock {\em J. Funct. Anal.}, 269(1):1--46, 2015.

\bibitem[{W}al91]{WALLIN-1991}
{H}. {W}allin.
\newblock {T}he trace to the boundary of {S}obolev spaces on a snowflake.
\newblock {\em {M}anuscripta {M}ath}, 73(1):117--125, {D}ec 1991.

\bibitem[Wei84]{Weidmann1984}
J.~Weidmann.
\newblock Stetige {A}bh\"angigkeit der {E}igenwerte und {E}igenfunktionen
  elliptischer {D}ifferentialoperatoren vom {G}ebiet.
\newblock {\em Math. Scand.}, 54:51--69, 1984.

\bibitem[Wei97]{Weidmann97}
J.~Weidmann.
\newblock Strong operator convergence and spectral theory of ordinary
  differential operators.
\newblock {\em Univ. Iagellonicae Acta Math.}, 34:153--163, 1997.

\bibitem[Zei90]{ZEIDLER}
E.~Zeidler.
\newblock {\em {N}onlinear {F}unctional {A}nalysis and its {A}pplications
  {II}/{A}: {L}inear {M}onotone {O}perators}.
\newblock Springer, New York, 1990.

\end{thebibliography}
\end{document}